\newtheorem{theorem}{Theorem}[section]
\newtheorem{prop}[theorem]{Proposition}
\newtheorem{cor}[theorem]{Corollary}
\newtheorem{lem}[theorem]{Lemma}
\newtheorem{dfn}[theorem]{Definition}
\numberwithin{equation}{section}
\def\RR{\mathbb{R}}
\def\Rn{\mathbb{R}^{n}}
\def\Rnuno{\mathbb{R}^{n-1}}
\def\Hnuno{\mathcal{H}^{n-1}}
\def\Hndos{\mathcal{H}^{n-2}}
\begin{document}
\title{Balanced split sets and Hamilton-Jacobi equations}

\author{Pablo Angulo Ardoy\\
Department of Mathematics\\Universidad Autónoma de Madrid. \and 
Luis Guijarro \\
Department of Mathematics\\Universidad Autónoma de Madrid.\\ICMAT CSIC-UAM-UCM-UC3M}
\maketitle

\begin{abstract}
We study the singular set of solutions to Hamilton-Jacobi equations with a Hamiltonian independent of $u$. In a previous paper, we proved that the singular set is what we called a balanced split locus. In this paper, we find and classify all balanced split loci, identifying the cases where the only balanced split locus is the singular locus, and the cases where this does not hold.
This clarifies the relationship between viscosity solutions and the classical approach of characteristics, providing equations for the singular set.
Along the way, we prove more structure results about the singular sets.
\end{abstract}

\section{Introduction}

In this paper we consider the following boundary value problem:
\begin{eqnarray}
H(p,du(p))=1&\quad&p\in \Omega\label{HJequation}\\
u(p)=g(p)&&p\in \partial\Omega\label{HJboundarydata}
\end{eqnarray}
for a smooth compact manifold $\Omega$ of dimension $n$ with boundary, $H$ smooth, $H^{-1}(1)\cap T^{\ast}_{p}\Omega$ strictly convex for every $p$, and $g$ smooth and satisfying the compatibility condition:
\begin{equation}\label{compatibility condition} 
 \vert g(y) - g(z)\vert < d(y,z)\qquad \forall y,z \in\partial\Omega
\end{equation}
where $d$ is the distance induced by the Finsler metric:

\begin{equation}\label{phi is the dual of H}
 \varphi_{p}(v)=\sup\left\lbrace 
\left\langle v,\alpha\right\rangle_{p}\, :\,
\alpha\in T^{\ast}_{p}\Omega, \,H(p,\alpha)=1
\right\rbrace 
\end{equation} 

This definition gives a norm in every tangent space $T_p\Omega$.
Indeed, $H$ is a norm at every tangent space if we make the harmless assumption that $H$ is positively homogeneous of order $1$: $H(p,\lambda \alpha)=\lambda H(p, \alpha)$ for $\lambda>0 $.

A unique \emph{viscosity solution} is given by the Lax-Oleinik formula:
\begin{equation}\label{LaxOleinik} 
u(p)=\inf_{q\in \partial \Omega}
\left\lbrace 
   d(p,q)+ g(q)
\right\rbrace
\end{equation} 

A local \emph{classical} solution can be computed near $\partial \Omega$ following \emph{characteristic} curves, which are geodesics of the metric $\varphi$ starting from a point in $\partial \Omega$ with initial speed given by a vector field on $\partial \Omega$ that 
is determined by $H$ and $g$ (see \ref{equation for the characteristic vector field}): if $\gamma:[0,t)\rightarrow \Omega$ is the unique (projected) characteristic from a point $q\in\partial\Omega $ to $p=\gamma(t)$ that does not intersect $Sing$, then $u(p)=g(q)+t$. 
The viscosity solution can be thought of as a way to extend the classical solution to the whole $\Omega$.
%

Let $Sing$ be the closure of the singular set of the viscosity solution $u$ to the above problem. $Sing$ has a key property: any point in $\Omega\setminus Sing$ can be joined to $\partial \Omega $ by a unique characteristic curve that does not intersect $Sing$. 
A set with this property is said to \emph{split} $\Omega$. 
Once characteristic curves are known, if we replace $Sing$ by any set $S$ that splits $\Omega$, we can still apply the formula in the last paragraph to obtain another function with some resemblance to the viscosity solution (see definition \ref{u associated to S}).

The set $Sing$ has an extra property: it is a \emph{balanced split locus}.
This notion, introduced in \cite{Nosotros} and inspired originally by the paper \cite{Itoh Tanaka 2}, is related to the notion of \emph{semiconcave} functions that is now common in the study of Hamilton-Jacobi equations
(see section \ref{section: setting}).
Our goal in this paper is to determine whether there is a unique balanced split locus. In the cases when this is not true, we also give an interpretation of the multiple balanced split loci.


Finally, we recall that the distance function to the boundary in Riemannian and Finsler geometry is the viscosity solution of a Hamilton-Jacobi equation (\cite{Mantegazza Mennucci}), and the \emph{cut locus} is the closure of the singular set of the distance function to the boundary (\cite{Li Nirenberg}). Thus, our results also apply to cut loci in Finsler geometry.


\subsection{Outline} 

In section 2 we state our results, give examples, and comment on possible extensions. Section 3 gathers some of the results from the literature we will need, and includes a few new lemmas that we use later. Section 4 contains our proof that the \emph{distance to a balanced split locus} and \emph{distance to the $k$-th conjugate point} are Lipschitz. Section 5 contains the proof of the main theorems, modulo a result that is proved in section 6. This last section also features detailed descriptions of a balanced split set at each of the points in the classification introduced in \cite{Nosotros}.

\subsection{Acknowledgements.} 

The authors express their gratitude to Ireneo Peral, Yanyan Li, Luc Nguyen, Marco Fontelos and Juan Carlos \'{A}lvarez for interesting conversations about this problem. The paper also benefited greatly from the referee's input, and the authors want to thank him for it.  Both authors were partially  supported during the preparation of this work by grants MTM2007-61982 and MTM2008-02686 of the MEC and the MCINN respectively. 

\section{Statement of results.}


\subsection{Setting}\label{section: setting}

We study a Hamilton-Jacobi equation given by (\ref{HJequation}) and (\ref{HJboundarydata}) in a $C^{\infty}$ compact manifold with boundary $\Omega$.
$H$ is smooth and strictly convex in the second argument and the boundary data $g$ is smooth and satisfies (\ref{compatibility condition}).

The solution by characteristics gives the \emph{characteristic vector field} on $\partial \Omega $, which we write as a map $\Gamma:\partial \Omega \rightarrow T\Omega$  that is a section of the projection map $\pi:T\Omega\rightarrow \Omega$ of the tangent to $\Omega$.
The characteristic curves are the integral curves of the geodesic vector field in $T\Omega$ with initial point $\Gamma(z)$ for $z\in \partial \Omega$.
The projected characteristics are the projection to $\Omega$ of the characteristics. The characteristic vector field is smooth and points inwards (see \ref{equation for the characteristic vector field}).

Let $\Phi$ be the geodesic flow in $T\Omega$, and $D(\Phi)$ its domain. We introduce the set $ V\subset \RR \times\partial\Omega$:
\begin{equation}\label{V is ...}
 V=\left\lbrace x=(t,z), z\in \partial \Omega, t\geq 0, (t,\Gamma(z))\in D(\Phi) \right\rbrace
\end{equation}
$V$ has coordinates given by $z\in \partial \Omega$ and $t\in \RR$.
We set $F:V\rightarrow \Omega$ to be the map given by following  the projected characteristic with initial value $\Gamma(z)$ a time $t$:
\begin{equation}\label{definition of F} 
 F(t,z)=\pi(\Phi(t,\Gamma(z)))
\end{equation}
The vector $r$ given as $\frac{\partial}{\partial t}$ in the above coordinates maps under $F$ to the tangent to the projected characteristic.

\begin{dfn}\label{splits}
For a set $S\subset \Omega$, let $A(S)\subset V$ be the set of all $x=(t,z)\in V$ such that $F(s,z)\notin S ,\, \forall \, 0\leq s<t$.
We say that a set $S\subset \Omega$ \emph{splits} $\Omega$ iff $F$ restricts to a bijection between $A(S)$ and $\Omega\setminus S$.
\end{dfn}

Whenever $S$ splits $\Omega$, we can define a vector field $R_{p}$ in $\Omega\setminus S$ to be $dF_{x}(r_{x})$ for the unique $x$ in $A(S)$ such that $F(x)=p$.
\begin{dfn}\label{the set R_a}

For a point $a\in S$, we define the \emph{limit set} $R_{a}$ as the set of vectors in $T_{a}\Omega$ that are limits of sequences of the vectors $R_{p}$ defined above at points $p\in \Omega\setminus S$.

\end{dfn}

\begin{dfn}\label{the set Q_p}
If $S$ splits $\Omega$, we also define a set $Q_{p}\subset V$ for $p\in \Omega$ by
$$
Q_{p}=\left( F\vert_{\overline{A(S)}}\right)^{-1}(p)
$$
\end{dfn}
The following relation holds between the sets $R_{p}$ and $Q_{p}$:
$$
R_{p}=\left\lbrace dF_{x}(r_{x}):x\in Q_{p}
\right\rbrace 
$$

\begin{dfn}\label{u associated to S}
If $S$ splits $\Omega$, we can define a real-valued function $h$ in $\Omega\setminus S$ by setting:
$$
h(p)=g(z)+t
$$
where $(t,z)$ is the unique point in $A(S)$ with $F(t,z)=p$.
\end{dfn}


If we start with the viscosity solution $u$ to the Hamilton-Jacobi equations, and let $S=Sing$ be the closure of the set where $u$ is not $C^1$, then $S$ splits $\Omega$. If we follow the above definition involving $A(S)$ to get a new function $h$, then we find $h=u$.

\begin{dfn}\label{split locus}
 A set $S$ that splits $\Omega$ is a split locus iff
$$
S =\overline{
\left\lbrace 
p\in S: \quad \sharp R_{p}\geq 2
\right\rbrace }
$$
\end{dfn}

The role of this condition is to restrict $S$ to its \textit{essential} part. A set that merely splits $\Omega$ could be too big: actually $\Omega$ itself splits $\Omega$. 
The following lemma may clarify this condition.

\begin{lem}\label{characterization of split locus}
A set $S$ that splits $\Omega$ is a split locus if and only if $S$ is closed and it has no proper closed subsets that split $\Omega$.
\end{lem}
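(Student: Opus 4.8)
The plan is to prove the two implications separately, using Definition~\ref{split locus} as the bridge. Write $E(S) = \{p\in S : \sharp R_p \geq 2\}$ for the "essential" part, so that $S$ is a split locus precisely when $S = \overline{E(S)}$.

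\emph{($\Rightarrow$) A split locus is closed and minimal.} That $S$ is closed is immediate, since $S=\overline{E(S)}$. For minimality, suppose $S'\subsetneq S$ is a closed proper subset that also splits $\Omega$, and pick $a\in S\setminus S'$; since $S'$ is closed we may take a small ball $B$ around $a$ disjoint from $S'$. The key point is to compare the vector fields $R^S$ and $R^{S'}$ on $\Omega\setminus S$: because $S'\subset S$, every characteristic segment that avoids $S$ also avoids $S'$, so $A(S)\subset A(S')$ and the two vector fields agree on $\Omega\setminus S$. Now take $p\in E(S)$ close to $a$ (possible because $S=\overline{E(S)}$, and points of $E(S)$ accumulate at $a$). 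Then $\sharp R_p\geq 2$ computed for $S$; but since $p\notin S'$ and $S'$ splits $\Omega$, the point $p$ is reached by a \emph{unique} characteristic avoiding $S'$, and one shows $R_p$ (for $S$) is contained in the corresponding limit computation for $S'$, forcing $\sharp R_p^{S'}\geq 2$ — contradicting that $p\notin S'$, where the vector field $R^{S'}$ is single-valued. Hence no such $S'$ exists.

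\emph{($\Leftarrow$) A closed minimal set is a split locus.} Let $S$ be closed, split $\Omega$, and have no proper closed splitting subset. We must show $S=\overline{E(S)}$. Set $S' = \overline{E(S)}\subseteq S$ (closed by construction). The crux is to show $S'$ still splits $\Omega$: then by minimality $S'=S$ and we are done. To see $S'$ splits $\Omega$, one argues that at any point $a\in S\setminus S'$ the vector field $R$ extends continuously (since $\sharp R_a = 1$ in a neighborhood within $S$, away from $E(S)$), so the characteristic flow can be continued through $a$; a covering/connectedness argument along characteristics then shows that removing all such inessential points still leaves a set across which $\Omega\setminus S'$ is filled bijectively by characteristic segments avoiding $S'$. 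This uses the structure of $A(\cdot)$ in Definition~\ref{splits} together with the fact that $\Omega\setminus S$ is already parametrized bijectively by $A(S)$.

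\emph{Main obstacle.} The delicate step is the "($\Leftarrow$)" direction: verifying that $S'=\overline{E(S)}$ genuinely splits $\Omega$, i.e. that no part of $\Omega\setminus S'$ fails to be covered, or is covered more than once, by characteristics avoiding $S'$. Injectivity can fail a priori if two distinct characteristics avoiding $S'$ reach the same point through the region $S\setminus S'$ that we have "opened up"; ruling this out requires knowing that at inessential points of $S$ the limit set $R_a$ is a single vector, so the flow through such points is well-defined and the newly-admitted characteristics cannot cross. Establishing this cleanly — rather than by hand-waving about "continuing the flow" — is where the real work lies, and I expect it to lean on the relation $R_p = \{dF_x(r_x) : x\in Q_p\}$ and a limiting argument on $\overline{A(S)}$.
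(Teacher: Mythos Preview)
Your $(\Rightarrow)$ argument has a genuine gap. You want to conclude that $\sharp R_p^{S'}\geq 2$ from $\sharp R_p^{S}\geq 2$, using that $R^S$ and $R^{S'}$ agree on $\Omega\setminus S$. But for $p\notin S'$ the object $R_p^{S'}$ is by definition a single vector, not a limit set, so what you really need is that the vector field $R^{S'}$ is \emph{continuous} at $p$; then two distinct limits of $R^{S'}$ along sequences in $\Omega\setminus S$ would contradict that. However, the definition of ``splits'' only gives that $F|_{A(S')}$ is a \emph{bijection} onto $\Omega\setminus S'$, not a homeomorphism, so continuity of $R^{S'}$ is not available for free, and extracting it from a compactness argument on the preimages $(t_i,z_i)$ runs into the problem that the limit point may land on $\partial A(S')$ rather than in $A(S')$.

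The paper avoids this entirely with a more direct geometric argument: pick $q\in E(S)\setminus S'$, take the characteristic $\gamma_1$ from $\partial\Omega$ to $q$ that avoids $S'$, and extend it slightly \emph{past} $q$ (possible since $q$ has a neighborhood disjoint from $S'$) to a point $q_1\notin S$. Now the characteristic $\gamma_2$ from $\partial\Omega$ to $q_1$ that avoids $S$ (and hence $S'$) is necessarily different from $\gamma_1$, because $\gamma_1$ passes through $q\in S$. This gives two distinct points of $A(S')$ mapping to $q_1$, contradicting the bijection directly --- no continuity needed. Your $R_p$ formulation is morally the same idea, but by staying at $p\in S$ you lose access to the clean injectivity statement; moving to $q_1\notin S$ is the trick.

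On the $(\Leftarrow)$ direction, you and the paper disagree about which direction is hard: the paper dismisses this implication as trivial (the opening sentence of the proof, modulo a mislabeling of ``if'' versus ``only if'') and spends all its effort on the minimality direction you treat briskly. Your proposed route --- show $\overline{E(S)}$ itself splits $\Omega$, then invoke minimality --- is the natural one, and your caution about the injectivity step is well placed; the paper simply does not spell this out.
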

\begin{proof}
The only if part is trivial, so we will only prove the other implication. 
Assume $S$ is a split locus and let $S'\subset S$ be a closed set splitting  $\Omega$. Let $q\in S\setminus S'$ a point with $\sharp R_q\geq 2$. 
Since $S'$ is closed, there is a neighborhood of $q$ away from $S'$; so, if $\gamma_1$ is a segment of a geodesic in $\Omega\setminus S'$ joining $\partial\Omega$ with $q$, there is a point $q_1$ in $\gamma_1$ lying beyond $q$.
%
Furthermore, we can choose the point $q_1$ not lying in $S$, so there is a second geodesic $\gamma_2$ contained in $\Omega\setminus S\subset \Omega\setminus S'$ from $\partial\Omega$ to $q_1$.
As $q\in S$, we see $\gamma_2$ is necessarily different from $\gamma_1$, which is a contradiction if $S'$ is split.
Therefore we learn $S'\supset\left\lbrace p\in S: \quad \sharp R_{p}\geq 2\right\rbrace$, so $S=\overline{\left\lbrace p\in S: \quad \sharp R_{p}\geq 2\right\rbrace } \subset S'$.

\end{proof}

Finally, we introduce the following more restrictive condition (see \ref{vector de x a y} for the definition of $v_p(q)$, the \emph{vector from $p$ to $q$}, and \ref{dual one form} for the Finsler dual of a vector).

\begin{dfn}\label{balanced} 
We say a split locus $S\subset \Omega$ is \emph{balanced} for given $\Omega$, $H$ and $g$ (or simply that it is balanced if there is no risk of confusion) iff  for all $p\in S$, 
all sequences $p_{i}\to p$ with $v_{p_{i}}(p)\to v\in T_{p}\Omega$, and any sequence of vectors $X_{i}\in R_{p_{i}}\to X_{\infty}\in R_{p}$, 
then 
$$
w_{\infty}(v)=\max\left\lbrace w(v),\text{ $w$ is dual to some $R\in R_{p}$} \right\rbrace
$$
where $w_{\infty}$ is the dual of $X_{\infty}$.
%
%

\end{dfn}

\paragraph{Remark.} We proved in \cite{Nosotros} that the cut locus of a submanifold in a Finsler manifold and the closure of the singular locus of the solution to (\ref{HJequation}) and (\ref{HJboundarydata}) are always balanced split loci.
The proof (and the definition of balanced itself) was inspired by the paper \cite{Itoh Tanaka 2}, and consists basically of an application of the first variation formula.

We give now another proof that relates the balanced condition to the notion of semiconcave functions, which is now common in the study of Hamilton-Jacobi equations.
More precisely, we simply translate theorem 3.3.15 in the book \cite{Cannarsa Sinestrari} to our language to get the following lemma:


\begin{lem}
The closure of the singular set of the viscosity solution to (\ref{HJequation}) and (\ref{HJboundarydata}) is a balanced split locus.
\end{lem}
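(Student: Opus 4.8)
The plan is to deduce the statement from the regularity theory of semiconcave functions, essentially by transcribing Theorem 3.3.15 of \cite{Cannarsa Sinestrari} into the language of characteristics, $R_{p}$, and $v_{p}(q)$.

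The first step is to record that the Lax--Oleinik solution (\ref{LaxOleinik}) is locally semiconcave with linear modulus on a neighbourhood of $Sing$. Indeed $u$ is the infimum of the family $p\mapsto d(p,q)+g(q)$, $q\in\partial\Omega$; each Finsler distance function $d(\cdot,q)$ is locally semiconcave with linear modulus on $\Omega\setminus\{q\}$ (see \cite{Mantegazza Mennucci}), and the modulus stays uniform as long as $p$ is kept at a fixed positive distance from $\partial\Omega$, so that an infimum of the family is semiconcave with linear modulus there. Since a classical solution by characteristics exists on a collar of $\partial\Omega$, the set $Sing$ is a compact subset of the interior of $\Omega$, bounded away from $\partial\Omega$, so $u$ is semiconcave near $Sing$, which is all we need. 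Moreover, for a semiconcave function the set of points of non-differentiability has the same closure as the set of non-$C^{1}$ points, so $Sing$ coincides with the closed singular set $\Sigma(u)$ of \cite{Cannarsa Sinestrari}.

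The second step is the dictionary. For $p\notin Sing$ there is exactly one characteristic through $p$, hence $R_{p}=\{R\}$ is a single vector, and along a characteristic $du$ is the Finsler dual (see \ref{dual one form}) of the velocity $R$; thus $du(p)$ is the dual of $R$. Letting $p$ vary and taking limits as in Definition \ref{the set R_a}, and using that differentiability points of a semiconcave function are dense, one checks that for every $p$ the set of Finsler duals of the vectors in $R_{p}$ is exactly the set $D^{*}u(p)$ of reachable gradients of $u$ at $p$. Since the Legendre transform is a bijection, $\sharp R_{p}=\sharp D^{*}u(p)$, and since $D^{+}u(p)=\mathrm{co}\,D^{*}u(p)$ for semiconcave $u$ (Theorem 3.3.6 of \cite{Cannarsa Sinestrari}), the superdifferential $D^{+}u(p)$ fails to be a singleton exactly when $\sharp R_{p}\ge 2$; this identifies $Sing$ with $\overline{\{p:\sharp R_{p}\ge 2\}}$, so $Sing$ is a split locus in the sense of Definition \ref{split locus} (that it splits $\Omega$ is already recalled in the text). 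Finally, the vector $v_{p_{i}}(p)$ of Definition \ref{balanced} is the unit direction from $p_{i}$ towards $p$ appearing in \cite{Cannarsa Sinestrari}, and the pairing $w(v)$ is the pairing used there.

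With the dictionary in place, Theorem 3.3.15 of \cite{Cannarsa Sinestrari} says precisely that if $p_{i}\to p$, $\xi_{i}\in D^{+}u(p_{i})$, $\xi_{i}\to\xi$ and $v_{p_{i}}(p)\to v$, then $\xi$ maximises $w\mapsto w(v)$ over $w\in D^{+}u(p)$. Applying this with $\xi_{i}$ the Finsler dual of $X_{i}\in R_{p_{i}}$, so $\xi_{i}\in D^{*}u(p_{i})\subset D^{+}u(p_{i})$, and with $\xi=w_{\infty}$ the dual of $X_{\infty}$, and using that the maximum of a linear functional over the convex set $D^{+}u(p)=\mathrm{co}\,D^{*}u(p)$ is attained at a generator, i.e.\ at the dual of some $R\in R_{p}$, we get
$$
w_{\infty}(v)=\max\{\,w(v): w\text{ dual to some }R\in R_{p}\,\},
$$
which is the balanced condition. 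The step requiring the most care is the dictionary of the second step: one must check that the $C^{2}$ bounds of the family $d(\cdot,q)+g(q)$ degenerate only as $q\to p$ (which cannot happen for $p\in Sing$ since $Sing$ stays at positive distance from $\partial\Omega$), one must identify the Finsler duals of $R_{p}$ with the reachable gradients $D^{*}u(p)$ and deduce $D^{+}u(p)=\mathrm{co}\,D^{*}u(p)$, and one must check that the coordinate pairing used in \cite{Cannarsa Sinestrari} transcribes to $w(v)$ with the directions oriented consistently. None of this is deep, but because the balanced condition is an equality rather than an inequality, the orientation of $v_{p_{i}}(p)$ and the convex-hull reduction have to be handled exactly.
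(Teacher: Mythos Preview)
Your proposal is correct and follows essentially the same route as the paper: establish semiconcavity of $u$, identify the Finsler duals of $R_p$ with the reachable gradients $D^{*}u(p)$ so that $D^{+}u(p)=\mathrm{co}\,D^{*}u(p)$, and then read off the balanced condition from Theorem~3.3.15 of \cite{Cannarsa Sinestrari}. The only point to flag is the sign convention: $v_{p_i}(p)$ here points from $p_i$ to $p$, which is opposite to the convention in \cite{Cannarsa Sinestrari}, so the exposed-face statement there becomes $w_\infty\in D^{+}u(p,-v)$ in our notation (and hence a maximum over $v$, as you write); you note that this orientation has to be handled exactly, but your intermediate sentence identifying the two directions should be corrected.
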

\begin{proof}
Let $u$ be the viscosity solution to (\ref{HJequation}) and (\ref{HJboundarydata}),
and let $Sing$ be the closure of its singular set.
We leave to the reader the proof that $Sing$ is a split locus (otherwise, see \cite{Nosotros}).

It is well known that $u$ is semiconcave (see for example \cite[5.3.7]{Cannarsa Sinestrari}).
The superdifferential $ D^+ u(p) $ of $u$ at $p$ is the convex hull of the set of limits of differentials of $u$ at points where $u$ is $C^1$ (see \cite[3.3.6]{Cannarsa Sinestrari}).
At a point where $u$ is $C^1$, the dual of the speed vector of a characteristic is the differential of $u$.
Thus, the superdifferential at $p$ is the convex hull of the duals to the vectors in $R_p$.
We deduce:
$$
\max\left\lbrace w(v),\text{ $w$ is dual to some $R\in R_{p}$} \right\rbrace =
\max\left\lbrace w(v), w\in D^+ u(p) \right\rbrace
$$
Given $p\in \Omega$, and $v\in T_p\Omega$, the \emph{exposed face} of $D^+ u(p) $ in the direction $v$ is given by:
$$
D^+ (p,v) = \{ \tilde{w}\in D^+ u(p) : \tilde{w}(v)\leq w(v) \;\forall w \in D^+ u(p) \}
$$
The balanced condition can be rephrased in these terms as:

\begin{quote}
 Let $p_i\rightarrow p\in S$ be a sequence with $v_{p_{i}}(p)\to v\in T_{p}\Omega$, and let $w_i\in D^+ u(p_{i})$ be a sequence converging to $w\in D^+ u(p)$.
 
 Then $w \in D^+ u(p,-v) $
\end{quote}
which is exactly the statement of theorem \cite[3.3.15]{Cannarsa Sinestrari}, with two minor remarks:
\begin{enumerate}
 \item The condition is restricted to points $p\in S$. At points in $\Omega\setminus S$, the balanced condition is trivial.
 \item In the balanced condition, we use the vectors $v_{p_i}(p)$ from $p_i$ to $p$, contrary to the reference \cite{Cannarsa Sinestrari}. Thus the minus sign in the statement.
\end{enumerate}

%
%

\end{proof}

In the light of this new proof, we can regard the balanced condition as a differential version of the semiconcavity condition. A semiconcave function that is a solution to (\ref{HJequation}) is also a viscosity solution (see \cite[5.3.1]{Cannarsa Sinestrari}). This papers tries to recover the same result under the balanced condition.


%

\subsection{Results}

For \emph{fixed} $\Omega$, $H$ and $g$ satisfying the conditions stated earlier, there is always at least one balanced split locus, namely the singular set of the solution of (\ref{HJequation}) and (\ref{HJboundarydata}).
In general, there might be more than one balanced split loci, depending on the topology of $\Omega$. 

Our first theorem covers a situation where there is uniqueness.

\begin{theorem}\label{maintheorem0}
Assume $\Omega$ is simply connected and $\partial \Omega$ is connected.

Then there is a unique balanced split locus, which is the singular locus of the solution of (\ref{HJequation}) and (\ref{HJboundarydata}).
\end{theorem}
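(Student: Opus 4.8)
\emph{Sketch of the proof.} Let $S\subseteq\Omega$ be an arbitrary balanced split locus; we must show $S=Sing$. Since $Sing$ is closed and splits $\Omega$, and $S$ is a split locus, Lemma \ref{characterization of split locus} reduces the claim to the inclusion $Sing\subseteq S$, that is, to showing that the viscosity solution $u$ is $C^{1}$ on $\Omega\setminus S$. We will obtain this by comparing $u$ with the function $h$ associated to $S$ in Definition \ref{u associated to S}.

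First, two observations about $h$ on the open set $\Omega\setminus S$. By Definition \ref{splits}, $F$ restricts to a bijection of $A(S)$ onto $\Omega\setminus S$; injectivity of this restriction prevents conjugate points in the interior of $A(S)$ (near such a point $F$ would fail to be injective), so $F$ is a local diffeomorphism over $\Omega\setminus S$. Hence $h$ is smooth on $\Omega\setminus S$ and, because projected characteristics are $\varphi$-geodesics, it solves $H(p,dh(p))=1$ there classically. Second, since the characteristic from $z$ to $F(t,z)$ is a path of $\varphi$-length $t\geq d(F(t,z),z)$, the Lax--Oleinik formula \eqref{LaxOleinik} gives at once $h(p)\geq u(p)$ for every $p\in\Omega\setminus S$.

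The heart of the matter is the reverse inequality $h\leq u$ on $\Omega\setminus S$. The plan is to extend $h$ to a function $\bar h$ on all of $\Omega$ by taking limits of $h$ along $\Omega\setminus S$, and to show that $\bar h$ is single valued, continuous, equal to $g$ on $\partial\Omega$, semiconcave, and a solution of \eqref{HJequation} almost everywhere (a balanced split locus has Lebesgue measure zero). By \cite[5.3.1]{Cannarsa Sinestrari} a semiconcave a.e.\ solution with these boundary values is \emph{the} viscosity solution, hence $\bar h=u$, so $h=u$ on $\Omega\setminus S$; being smooth there, $u$ is $C^{1}$ on $\Omega\setminus S$, which is the inclusion we wanted. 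The semiconcavity of $\bar h$ is nothing but the balanced condition of Definition \ref{balanced} read through the superdifferential, as in the reformulation recalled earlier in Section \ref{section: setting}, and $\bar h=g$ on $\partial\Omega$ because near the boundary $h=u$. What genuinely uses the hypotheses is the claim that $\bar h$ is well defined and continuous across $S$: at $a\in S$ the competing candidate values are $g(z)+t$ for $(t,z)\in Q_{a}$ (Definition \ref{the set Q_p}), and one must prove they all coincide. I would establish this by induction on $d(\cdot,\partial\Omega)$, using the balanced condition at $a$ together with the Lipschitz estimates of Section~4 to match the values carried by the several characteristics arriving at $a$, and using that $\Omega$ is simply connected and $\partial\Omega$ connected to rule out a cycle of characteristics in $\Omega\setminus S$ obstructing this matching. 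This is precisely where uniqueness fails in general: when $\Omega$ is not simply connected, the holonomy of characteristics around a nontrivial loop in $\Omega\setminus S$ can make two characteristics reach a point of a balanced split locus carrying different values of $g(z)+t$, which is what produces balanced split loci other than $Sing$.

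The main obstacle is therefore this last step: promoting the infinitesimal balanced condition to the global continuity of $\bar h$ under the topological hypotheses. Once that continuity is in hand, semiconcavity together with uniqueness of viscosity solutions closes the argument almost mechanically, and the final passage from $h=u$ on $\Omega\setminus S$ to $S=Sing$ is just Lemma \ref{characterization of split locus}.
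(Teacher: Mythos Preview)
Your framework is reasonable up to the point you yourself flag as ``the main obstacle'': showing that all values $g(z)+t$ for $(t,z)\in Q_a$ coincide, so that $\bar h$ is single-valued and continuous on $S$. The suggestion to do this ``by induction on $d(\cdot,\partial\Omega)$'' using the balanced condition and the topological hypotheses is not an argument; it is a restatement of the goal. The balanced condition at $a$ only says that the \emph{differentials} $dh$ from the several sides agree on $T_aS$ --- it gives no information about the \emph{values} $h_1(a),h_2(a),\dots$ themselves, and the ``cycle of characteristics'' you hope simple connectivity rules out is not even formulated precisely enough to see why $\pi_1(\Omega)=0$ would help. A second gap is the assertion that semiconcavity of $\bar h$ ``is nothing but the balanced condition''. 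The paper only proves the implication \emph{semiconcave $\Rightarrow$ balanced} (Lemma~2.9), and explicitly remarks afterward that the point of the paper is to recover results under the weaker balanced hypothesis; you are simply asserting the converse.

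The paper handles exactly the step you are missing, and in a quite different way. It encodes the jump $h_1-h_2$ across each cleave component as an $(n-1)$-current $T$ supported on $S$ (equation~(\ref{definition of T})). The hard analytic work (all of Section~6, using the Lipschitz estimates of Section~4, the structure theorem~\ref{structure up to codimension 3}, and the univocal-neighborhood Lemma~\ref{uniqueness near order 1 points}) is to prove $\partial T=0$. Then the topological hypotheses enter cleanly: simple connectivity and connectedness of $\partial\Omega$ force $H_{n-1}(\Omega;\RR)=0$ via Lefschetz duality and the long exact sequence~(\ref{long exact sequence}), so $T=\partial P$; since $P$ is represented by a density constant on the connected set $\Omega\setminus S$, one gets $T=0$. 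Thus the jumps vanish, $h$ extends continuously, and Lemma~\ref{una unica solucion por caracteristicas continua en un conjunto grande} yields $S=Sing$. Your outline never reaches an analogue of ``$\partial T=0$'', which is where the real content lies.
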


The next theorem removes the assumption that $\partial \Omega$ is connected, at the price of losing uniqueness:

\begin{theorem}\label{maintheorem1}
Assume $\Omega$ is simply connected and $\partial \Omega$ has several connected components.
Let $S\subset \Omega$ be a balanced split locus.

Then $S$ is the singular locus of the solution of (\ref{HJequation}) and (\ref{HJboundarydata}) with boundary data $g+a$ where the function $a$ is constant at each connected component of $\partial\Omega$.
\end{theorem}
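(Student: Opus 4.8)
The plan is to reduce to Theorem \ref{maintheorem0} by a "component-by-component" argument. Write $\partial\Omega = \bigsqcup_{j=1}^m \Gamma_j$ for the connected components. Given a balanced split locus $S$ for $(\Omega,H,g)$, the strategy is to produce real numbers $a_1,\dots,a_m$ (one per component) such that $S$ coincides with the singular locus of the viscosity solution $u_a$ associated to the boundary data $g_a := g + \sum_j a_j \mathbf{1}_{\Gamma_j}$. Equivalently, I want to show that the function $h$ associated to $S$ via Definition \ref{u associated to S}, after adjusting $g$ by such a locally constant $a$, \emph{is} the viscosity solution $u_a$; then $S = Sing(u_a)$ follows because $S$ is a split locus (its essential part forces equality, cf.\ Lemma \ref{characterization of split locus}).

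The first step is to understand the structure of $S$ using the simple connectivity of $\Omega$. The key point is that although $S$ itself depends on which component a characteristic emanates from, the balanced condition is a purely local/infinitesimal constraint: it does not see the values of $g$, only its differential along $\partial\Omega$ (which enters through the characteristic vector field $\Gamma$, hence through $F$), together with the combinatorics of the sets $R_p$. So if I replace $g$ by $g_a$, the map $F$, the notion of splitting, and the balanced condition are all \emph{unchanged}; only the associated function $h$ changes, by adding $a_j$ along characteristics coming out of $\Gamma_j$. Thus $S$ is simultaneously a balanced split locus for every boundary datum of the form $g_a$. The real content is therefore: \emph{choose $a$ so that the resulting $h$ is continuous across $S$ in the viscosity sense} (i.e.\ semiconcave / satisfies the inequality defining a subsolution on both sides), which will pin down $u_a = h$ by uniqueness of viscosity solutions.

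To choose the $a_j$: pick a base component, say $\Gamma_1$, and set $a_1 = 0$. For any other component $\Gamma_j$, use simple connectivity of $\Omega$ to find a path from a characteristic out of $\Gamma_1$ to a characteristic out of $\Gamma_j$ crossing $S$ transversally at generic "edge" points (points with $\sharp R_p = 2$, which are dense in $S$ by Definition \ref{split locus}); at each such crossing the balanced condition gives an identity relating the two values of $h$ on either side, and integrating these along the path produces the required offset $a_j$. One must check this is well-defined, i.e.\ independent of the path — here simple connectivity of $\Omega$ is exactly what guarantees that any two such paths are homotopic and that the accumulated offset is a homotopy invariant (the offset is locally constant off $S$ and the balanced condition makes the "jump" across $S$ consistent). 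Once $a$ is fixed, $h - (\text{the locally constant choice})$ extends to a single-valued semiconcave function on $\Omega\setminus S$ whose only possible singularities are on $S$; applying the Lemma identifying balanced split loci with singular sets of semiconcave HJ solutions (the second Lemma in Section \ref{section: setting}), together with Theorem \ref{maintheorem0} applied "morally" to the adjusted problem, gives $h = u_a$ and $S = Sing(u_a)$.

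The main obstacle I expect is the well-definedness of the offsets $a_j$, i.e.\ showing the "jump of $h$ across $S$" is a closed $1$-cocycle on $\Omega$ that is exact because $\Omega$ is simply connected. This requires: (i) knowing $S$ is nice enough (codimension one, with a dense set of two-sided edge points and small higher-strata — available from the structure results referenced from \cite{Nosotros} and Section 6) that "transversal crossings" make sense and a path can be perturbed to avoid the bad part of $S$; (ii) showing the balanced condition at an edge point genuinely forces the two one-sided limits of $h$ to differ by a fixed amount locally — this is where one uses that the duals of the two vectors in $R_p$ lie in $D^+u$ and that the exposed-face characterization makes the first-order parts of the two branches agree along $S$, so their difference is locally constant on $S$; and (iii) ruling out monodromy: a loop in $\Omega$ that crosses $S$ an unequal number of times in each "sheet" would give a nonzero period, contradicting simple connectivity. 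Steps (ii)–(iii) are the heart of the matter; once the cocycle is shown exact, the rest is bookkeeping plus an appeal to uniqueness of viscosity solutions and Theorem \ref{maintheorem0}.
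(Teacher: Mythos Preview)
Your strategy is the Lefschetz dual of the paper's. The paper builds an $(n-1)$-current $T$ supported on $S$ whose density is the jump $h_1-h_2$ across cleave components, proves $\partial T=0$ (this is all of Section~6), and then reads off from the long exact sequence that $[T]\in H_{n-1}(\Omega)$ lies in the image of $H_{n-1}(\partial\Omega)$, i.e.\ $T$ is homologous to $\sum a_j[\Gamma_j]$; subtracting this and invoking Lemma~\ref{una unica solucion por caracteristicas continua en un conjunto grande} finishes. Your path/cocycle picture is the $H^1$ side of the same duality: the ``total jump along a loop'' pairs with $T$ via intersection, and its vanishing on contractible loops is $\partial T=0$ read through $H_{n-1}(\Omega,\partial\Omega)\cong H^1(\Omega)=0$. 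So the two routes are equivalent in principle, and what each buys is the same.

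The substantive work you defer to step~(iii) is not lighter than the paper's Section~6. To prove path-independence you must sweep a homotopy between two paths; a generic $2$-disk meets the $(n-2)$-dimensional strata of $S$ (crossing, edge, degenerate-cleave points) in isolated points, and at each such point you need precisely the local statements proved there: near an edge point the jump vanishes because of a univocal neighborhood (Lemma~\ref{uniqueness near order 1 points}); near a crossing point the several jumps satisfy a cocycle identity (this is the content of Lemmas~\ref{structure of crossing points1}--\ref{structure of crossing points3} and the final proposition of Section~6). Perturbing the \emph{path} off the bad strata, as you suggest in~(i), is not enough---you must control the \emph{homotopy}. Note also that cleave components can have both sides coming from the \emph{same} $\Gamma_j$; the vanishing of the jump there is not a matter of counting crossings ``in each sheet'' but a consequence of the very closedness you are trying to establish.

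Your endgame citations are off. The second lemma of Section~\ref{section: setting} only shows that $Sing(u)$ is balanced, not the converse; and Theorem~\ref{maintheorem0} assumes $\partial\Omega$ connected, so it does not apply, even ``morally'', to the adjusted problem. What you actually need is Lemma~\ref{una unica solucion por caracteristicas continua en un conjunto grande}: once the adjusted $h$ has zero jump across cleave points it is continuous off an $\Hnuno$-null set, and that lemma yields $S=Sing(u_a)$ directly.
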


The above theorem describes precisely all the balanced split loci in a situation where there is non-uniqueness. If $\Omega$ is not simply connected, the balanced split loci are more complicated to describe. We provide a somewhat involved procedure using the universal cover of the manifold. However, the final answer is very natural in light of the examples.

\begin{theorem}
There exists a bijection between balanced split loci for given $\Omega $, $H$ and $g$ and an open subset of the homology space $H^1(\Omega, \partial\Omega)$ containing zero.
\end{theorem}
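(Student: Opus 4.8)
The plan is to reduce the non-simply-connected case to Theorem~\ref{maintheorem1} by passing to the universal cover $\widetilde{\Omega}$, and then to track how the deck group acts on the set of balanced split loci there. First I would lift everything: $H$ pulls back to a Hamiltonian $\widetilde H$ on $\widetilde\Omega$, and $\partial\widetilde\Omega$ is the preimage of $\partial\Omega$, which is typically disconnected even if $\partial\Omega$ is connected. A balanced split locus $S\subset\Omega$ lifts to a $\pi_1(\Omega)$-invariant balanced split locus $\widetilde S\subset\widetilde\Omega$, and conversely any deck-invariant balanced split locus on $\widetilde\Omega$ descends. By Theorem~\ref{maintheorem1} applied on $\widetilde\Omega$ (once one checks the hypotheses — $\widetilde\Omega$ simply connected, boundary possibly disconnected, and the compatibility condition still holds for the lifted data, which needs a short argument since $\widetilde\Omega$ is non-compact and one must work with the metric completion or argue locally), each such $\widetilde S$ is the singular locus of the solution with boundary data $\widetilde g + a$, where $a$ is locally constant on $\partial\widetilde\Omega$. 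So the balanced split loci on $\widetilde\Omega$ are parametrized by such functions $a$, modulo the global additive constant (which does not change the singular set), i.e.\ by an open subset of $\RR^{(\text{components of }\partial\widetilde\Omega)}/\RR$ — the open condition being exactly that $\widetilde g + a$ still satisfies the compatibility condition~(\ref{compatibility condition}).

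The next step is to identify which of these $a$ give deck-invariant loci, hence descend to $\Omega$. The deck group $\pi_1(\Omega)$ permutes the components of $\partial\widetilde\Omega$, and the singular locus of $\widetilde g + a$ is deck-invariant precisely when $a$ differs from its $\gamma$-translate by a constant for every $\gamma\in\pi_1(\Omega)$. I would encode $a$ by its increments: fixing a basepoint component, the ``interesting'' data is the vector of differences of $a$ between components, and deck-invariance turns this into a class in $\mathrm{Hom}(\pi_1(\Omega),\RR) = H^1(\Omega;\RR)$. More precisely, a locally constant function on $\partial\widetilde\Omega$ that is equivariant up to constants assigns to each loop $\gamma$ the constant by which $a$ shifts, and this assignment is a homomorphism $\pi_1(\Omega)\to\RR$; conversely every homomorphism arises this way. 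The relative cohomology $H^1(\Omega,\partial\Omega)$ enters because one must also allow genuine changes of the boundary data on $\Omega$ itself when $\partial\Omega$ is disconnected: the long exact sequence of the pair $(\Omega,\partial\Omega)$ with $\RR$ coefficients, $H^0(\partial\Omega)\to H^1(\Omega,\partial\Omega)\to H^1(\Omega)\to H^1(\partial\Omega)$, is exactly the bookkeeping that combines the ``Theorem~\ref{maintheorem1}'' degrees of freedom (shifts of $g$ on components of $\partial\Omega$, i.e.\ the image of $H^0(\partial\Omega)$) with the ``monodromy'' degrees of freedom ($H^1(\Omega)$). I would verify that the map sending a balanced split locus to this cohomology class is well-defined, injective (two loci with the same class come from boundary data differing by something that does not affect the singular set, by the uniqueness half of Theorem~\ref{maintheorem1} upstairs), and surjective onto the open set cut out by the compatibility condition.

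Concretely, the bijection would be built as follows: given a balanced split locus $S$, lift to $\widetilde S$, write $\widetilde S = Sing(\widetilde g + a)$ via Theorem~\ref{maintheorem1}, and associate to $S$ the class $[\,da\,]\in H^1(\Omega,\partial\Omega;\RR)$ represented by the (closed, relative) $1$-form that is $du_{\widetilde S} - du_{S\text{-pullback}}$ downstairs — equivalently, the class measuring the difference between the multivalued function ``$h$ associated to $S$'' (Definition~\ref{u associated to S}) and the genuine single-valued viscosity solution. That this is closed and relative (vanishing near $\partial\Omega$, where $h$ agrees with $g$ and is single-valued) follows because both functions solve~(\ref{HJequation}) and have the same characteristic field near the boundary. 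The zero class corresponds to $S = Sing(u)$. The open subset is the image of the compatibility region under $a\mapsto [da]$, which is convex, open, and contains $0$.

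The main obstacle, I expect, is the non-compactness of $\widetilde\Omega$: Theorem~\ref{maintheorem1} as stated assumes a compact manifold with boundary, and the Lax--Oleinik formula, the structure of $Sing$, and the Lipschitz estimates of Section~4 must be re-established (or shown to be purely local, hence transferable) on the cover. One route is to avoid the cover entirely and argue directly on $\Omega$: show that any balanced split locus $S$ determines a closed relative $1$-form $\omega_S$ (the ``discrepancy form'' above), that $[\omega_S]$ determines $S$, and that the balanced condition is equivalent to $\omega_S$ being of this special type — but verifying the balanced condition without the uniqueness machinery of Theorem~\ref{maintheorem1} is itself delicate, so the covering-space argument, with a careful local-to-global check of the needed results, is the cleaner path and the one I would pursue, flagging the compactness verification as the technical heart of the proof.
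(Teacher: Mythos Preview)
Your proposal is correct and follows essentially the same route as the paper: lift to the universal cover, classify balanced split loci there by locally constant shifts $a$ of the boundary data (the content of Theorem~\ref{maintheorem1}), single out the deck-equivariant $a$'s, and identify those (modulo a global constant) with an open subset of $H^1(\Omega,\partial\Omega)$ containing $0$. The paper packages this as Theorem~\ref{maintheorem2}, from which the present statement follows immediately.

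Two minor differences are worth flagging. First, rather than invoking Theorem~\ref{maintheorem1} as a black box on $\widetilde\Omega$, the paper reruns its mechanism there: it lifts the $(n-1)$-current $T_S$ (whose closedness is the subject of Section~6) to $T_{\widetilde S}$, uses $H^1(\widetilde\Omega)=0$ to write $T_{\widetilde S}=\sum a_j[\Lambda_j]+\partial P$, and reads off the equivariant $a$ from the coefficients; your ``discrepancy $1$-form'' is the Lefschetz dual of this current, so the two pictures agree. Second, the non-compactness worry you raise is real and is exactly the point the paper pauses on; it dispatches it with a one-line citation (remark~5.5 in \cite{Lions}) rather than a local-to-global verification, so your instinct that this is the technical heart is right, but the paper treats it as already known.
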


In fact, this theorem follows immediately from the next, where we construct such bijection:

\begin{theorem}\label{maintheorem2}
Let $\widetilde{\Omega}$ be the universal cover of $\Omega$, and lift both $H$ and $g$ to $\widetilde{\Omega} $.

Let $a:[\partial \widetilde{\Omega}]\rightarrow \RR $ be an assignment of a constant to each connected component of $\partial \widetilde{\Omega} $ that is equivariant for the action of the automorphism group of the covering and such that $\widetilde{g}(z)+a(z)$ satisfies the compatibility condition (\ref{compatibility condition}) in $\widetilde{\Omega} $. Then 
the singular locus $\widetilde{S}$ of the solution $\widetilde{u} $ to:

$$
\widetilde{H}(x,d\widetilde{u}(x))=1 \quad x\in \widetilde{\Omega}
$$
$$
\widetilde{u}(x)=\widetilde{g}(x)+a(z)  \quad x\in \partial\widetilde{\Omega}
$$
\noindent is invariant by the automorphism group of the covering, and its quotient is a set $S$ that is a balanced split locus for $\Omega $, $H$ and $g$. Furthermore:

\begin{enumerate}
 \item The procedure above yields a bijection between balanced split loci for given $\Omega $, $H$ and $g$ and \emph{equivariant compatible} functions 
$a:[\partial \widetilde{\Omega}]\rightarrow \RR $.
 \item Among the set of equivariant functions $a:[\partial \widetilde{\Omega}]\rightarrow \RR $ (that can be identified naturally with $H^1(\Omega, \partial\Omega)$), those compatible correspond to an open subset of $H^1(\Omega, \partial\Omega)$ that contains $0$.
\end{enumerate}


\end{theorem}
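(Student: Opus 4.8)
The plan is to build the claimed bijection in three stages: (i) show that each equivariant compatible function $a$ produces a balanced split locus downstairs; (ii) show the map $a\mapsto S$ is injective; (iii) show it is surjective, i.e.\ every balanced split locus arises this way; and finally (iv) identify the parameter space with an open subset of $H^1(\Omega,\partial\Omega)$ containing $0$. For (i), lift $H$ and $g$ to $\widetilde\Omega$ and add $a$ to the lifted boundary data. Since $a$ is equivariant and $\widetilde{g}+a$ is compatible on $\widetilde\Omega$, the Lax--Oleinik formula \eqref{LaxOleinik} produces a viscosity solution $\widetilde u$ on $\widetilde\Omega$; by the Lemma above its singular set $\widetilde S$ is a balanced split locus for $\widetilde\Omega$, $\widetilde H$, $\widetilde g+a$. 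Equivariance of the data forces $\widetilde u$ to be equivariant up to the additive constants, hence $\widetilde S$ is genuinely invariant under the deck group, so it descends to a closed set $S\subset\Omega$. One then checks that descending preserves both the splitting property (a characteristic in $\Omega\setminus S$ lifts uniquely to one in $\widetilde\Omega\setminus\widetilde S$, because $\widetilde\Omega\setminus\widetilde S$ is exactly the set reached by non-singular characteristics, which is simply connected along each characteristic) and the balanced condition (the sets $R_p$, the vectors $v_{p_i}(p)$ and the duality are all local, hence unchanged by a covering projection). This gives that $S$ is a balanced split locus for $\Omega$, $H$, $g$.

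For injectivity and surjectivity, the key idea is to go the other way: given a balanced split locus $S$ for $\Omega$, $H$, $g$, form the function $h$ of Definition \ref{u associated to S} on $\Omega\setminus S$, lift $S$ to $\widetilde S\subset\widetilde\Omega$, and lift $h$ to $\widetilde h$ on $\widetilde\Omega\setminus\widetilde S$. The main structural claim — which is where Theorem \ref{maintheorem1} (the simply connected case) does the real work — is that $\widetilde S$ is a balanced split locus for $\widetilde\Omega$ and that $\widetilde h$ extends to the viscosity solution of the Hamilton--Jacobi problem on $\widetilde\Omega$ for \emph{some} boundary data of the form $\widetilde g + a$, with $a$ constant on each component of $\partial\widetilde\Omega$. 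Indeed $\widetilde\Omega$ is simply connected, so Theorem \ref{maintheorem1} applies and tells us that every balanced split locus on $\widetilde\Omega$ is the singular locus of such a modified problem; so it suffices to verify that $\widetilde S$ is balanced on $\widetilde\Omega$. This verification is again local plus a covering argument: balancedness is a condition on limits of the local fields $R_p$, and these are unaffected by passing to the cover, provided we know $\widetilde S$ still \emph{splits} $\widetilde\Omega$ — which follows because a characteristic in $\Omega\setminus S$ has a unique lift and distinct lifts of $\partial\Omega$-points cannot be joined to the same lifted point without creating, after projection, two characteristics to the same point of $\Omega\setminus S$. The constant $a$ recovered this way is forced to be equivariant because $\widetilde h$, being a lift, is equivariant up to constants; and it is compatible precisely because $\widetilde S$ is nonempty-or-not in the right way, i.e.\ because the modified problem on $\widetilde\Omega$ actually has a solution. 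Injectivity is then immediate: $a$ is determined by $\widetilde S = \pi^{-1}(S)$ via Theorem \ref{maintheorem1}'s uniqueness clause (two choices of $a$ differing by a nonconstant-on-components amount give different singular loci, already on $\widetilde\Omega$), and surjectivity is the construction of (i).

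For the last point, equivariant assignments $a:[\partial\widetilde\Omega]\to\RR$ modulo global constants are naturally identified with $H^1(\Omega,\partial\Omega)$: an equivariant function on the components of $\partial\widetilde\Omega$ is the same as a $1$-cochain on $\Omega$ that is a coboundary on $\partial\Omega$ — concretely, $a$ assigns to a deck transformation $\sigma$ the difference $a(\sigma z)-a(z)$, a homomorphism $\pi_1(\Omega)\to\RR$ vanishing on loops in $\partial\Omega$, which is exactly $H^1(\Omega,\partial\Omega;\RR)$ after the de Rham/singular identification. The compatible ones form an \emph{open} set: compatibility \eqref{compatibility condition} for $\widetilde g + a$ is a family of strict inequalities $|(\widetilde g+a)(y)-(\widetilde g+a)(z)| < d(y,z)$ over pairs $y,z\in\partial\widetilde\Omega$, and since everything is equivariant this reduces to finitely many strict inequalities on a compact set, hence is stable under small perturbations of $a$; and $a\equiv 0$ is compatible by our standing hypothesis on $g$, so $0$ lies in this open set.

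The step I expect to be the main obstacle is the surjectivity/structure claim in the middle paragraph: proving that for an \emph{arbitrary} balanced split locus $S$ downstairs, the lift $\widetilde S$ is still balanced (not merely splitting) on $\widetilde\Omega$, and that the lifted function $\widetilde h$ really does glue to a global viscosity solution of a compatible modified problem. The subtlety is that $\widetilde h$ is a priori only defined on $\widetilde\Omega\setminus\widetilde S$ and one must rule out "monodromy" — that following characteristics around noncontractible loops forces $\widetilde h$ to be multivalued — which is exactly what passing to the universal cover is designed to kill, but making this rigorous requires carefully tracking how the constants $a$ on different boundary components of $\widetilde\Omega$ are pinned down by the requirement that $\widetilde h$ be single-valued and balanced, and then invoking Theorem \ref{maintheorem1} to conclude. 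All the covering-space bookkeeping (unique path lifting for characteristics, invariance of $R_p$ and of the balanced condition under $\pi$) is routine but must be done with care.
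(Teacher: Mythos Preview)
Your plan is essentially the same as the paper's: lift to the universal cover, use that $\widetilde\Omega$ is simply connected, and read off the constants $a$ on the boundary components. The only packaging difference is that the paper makes the homological content explicit---it works directly with the closed $(n-1)$-current $T_S$ (and its lift $T_{\widetilde S}$), writes its class as $\sum a_j[\Lambda_j]$ using $H^1(\widetilde\Omega)=0$, and extracts $a$ from those coefficients---whereas you black-box this step by invoking Theorem~\ref{maintheorem1} on $\widetilde\Omega$; since the proof of Theorem~\ref{maintheorem1} \emph{is} that current argument, the substance is identical.

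One point you should address that the paper does flag: when $\pi_1(\Omega)$ is infinite, $\widetilde\Omega$ is not compact, so neither Theorem~\ref{maintheorem1} nor the Lax--Oleinik formula applies verbatim under the standing hypotheses of the paper. The paper handles this with a one-line remark (citing Lions for the well-posedness of the HJ problem on the non-compact cover); your argument needs the same remark, and your openness claim in (iv) (``reduces to finitely many strict inequalities on a compact set'') should be phrased via a fundamental domain for exactly this reason.
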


\paragraph{Remark.}
The space $H^1(\Omega, \partial\Omega)$ is dual to $ H_{n-1}(\Omega)$ by Lefschetz theorem. The proof of the above theorems rely on the construction from $S$ of a $(n-1)$-dimensional current $T_S$ that is shown to be closed and thus represents a cohomology class in $H_{n-1}(\Omega)$. The proof of the above theorem also shows that the map sending $S$ to the homology class of $T_S$ is a bijection from the set of balanced split loci onto a subset of $H_{n-1}(\Omega)$.

%
%
%
%
%

In order to prove the above we will make heavy use of some structure results for balanced split loci. To begin with, we use the results of \cite{Nosotros}, which were stated for a balanced split locus with this paper in mind. In the last section, we improve the description of the cut locus near each of these types of points.

We also study some very important functions for the study of the cut locus. Recall the global coordinates in $V$ given by $z\in \partial \Omega $ and $t\in \RR$.
Let $\lambda_j(z)$ be the value of $t$ at which the geodesic $s\rightarrow \Phi (s,z)$ has its $j$-th conjugate point (counting multiplicities), or $\infty$ if there is no such point.
Let $\rho_S:\partial \Omega \rightarrow \RR$ be the minimum $t$ such that $F(t,z)\in S$.

\begin{lem}\label{landa es Lipschitz} 
All functions $\lambda_j: \partial\Omega \rightarrow \RR $ are Lipschitz continuous.
\end{lem}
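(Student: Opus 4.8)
The plan is to show that the conjugate-point functions $\lambda_j$ are locally Lipschitz by exhibiting them, locally, as implicit solutions of a nondegenerate equation built from the geodesic flow. Fix $z_0\in\partial\Omega$ and suppose $\lambda_j(z_0)=t_0<\infty$ (the case $\lambda_j(z_0)=\infty$ needs separate, easy treatment: if the $j$-th conjugate time is infinite at $z_0$ it is infinite, or at least bounded below by any prescribed constant, on a neighborhood, by continuity of the flow and its Jacobi fields on compact time intervals; so $\lambda_j$ is locally constant equal to $\infty$ there, hence trivially Lipschitz in the extended sense, and one only needs to worry about the boundary between the finite and infinite regions, where one argues that $\lambda_j\to\infty$). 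For the finite case, recall that conjugate points along the characteristic $s\mapsto\Phi(s,\Gamma(z))$ are detected by the degeneracy of $dF_{(t,z)}$ restricted to the directions tangent to $\partial\Omega$: more precisely, let $J(t,z)$ be the $(n-1)\times(n-1)$ matrix (in suitable smoothly varying frames) whose determinant vanishes exactly when $t$ is a conjugate time, with the order of vanishing equal to the multiplicity. This $J$ is smooth in $(t,z)$ because it is assembled from the geodesic flow $\Phi$, which is smooth on its open domain $D(\Phi)$, and from the smooth vector field $\Gamma$.

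The key step is then the following. Counting multiplicities, $\lambda_j(z_0)=t_0$ means $\det J(\cdot,z_0)$ has a zero of some order $m\ge 1$ at $t=t_0$, and there are exactly $j$ zeros (with multiplicity) in $(0,t_0]$. The standard Morse-theoretic fact about Jacobi fields is that $d(t):=\det J(t,z_0)$ changes sign at $t_0$ if $m$ is odd and does not if $m$ is even, but in all cases the zeros of $d$ are isolated and, more importantly, one can control the nearby zeros of $d(t,z):=\det J(t,z)$ for $z$ near $z_0$. The clean way to get a Lipschitz (not merely continuous) bound is to pass to the symmetrized index form: the number of conjugate times in $(0,t]$, counted with multiplicity, equals the Morse index of the index form $I_t$ on the space of vector fields along the geodesic vanishing at the endpoints, and this is an integer-valued, monotone function of $t$. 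So $\lambda_j(z)$ is characterized by: the index jumps from $<j$ to $\ge j$ at $t=\lambda_j(z)$. Lipschitz continuity of $z\mapsto\lambda_j(z)$ now follows from a quantitative version of continuous dependence of the index form on $z$: the index form $I_t^z$ depends smoothly on $(t,z)$, its ``negative part'' has a definite size, and a uniform lower bound (on the compact set of relevant $(t,z)$) on the speed at which eigenvalues of the associated symmetric operator cross zero yields the Lipschitz estimate. Concretely, I would fix the index $j$, note that near a crossing the relevant eigenvalue $\mu(t,z)$ of the index form satisfies $\partial_t\mu<0$ uniformly (this is the classical fact that the index is strictly increasing through a conjugate point, made quantitative on compacta) while $|\partial_z\mu|$ is bounded; the implicit function theorem applied to $\mu(t,z)=0$ then gives $\lambda_j$ as a $C^1$, in particular locally Lipschitz, function of $z$ near $z_0$, with Lipschitz constant $\le \sup|\partial_z\mu|/\inf|\partial_t\mu|$.

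Patching these local bounds: $\partial\Omega$ is compact, and the finiteness region $\{\lambda_j<\infty\}$ is open; on any compact subset of it the above gives a uniform Lipschitz constant, and the behavior near $\partial\{\lambda_j<\infty\}$ is handled by the blow-up argument above (so that there is no finite oscillation that could spoil a global Lipschitz bound — one shows $\lambda_j$ is continuous as a map into $(0,\infty]$ and the only way to leave every compact subinterval is to tend to $+\infty$). Hence $\lambda_j$ is Lipschitz on $\partial\Omega$.

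The main obstacle I anticipate is the bookkeeping at conjugate points of multiplicity $\ge 2$ and, relatedly, at parameters $z$ where the multiplicity structure changes (e.g. a double conjugate point splitting into two simple ones as $z$ varies): there $\det J(\cdot,z)$ may fail to change sign, so one cannot naively invoke the implicit function theorem on $\det J=0$, and one genuinely needs the eigenvalue-of-the-index-form formulation, where the statement ``the $j$-th eigenvalue crosses zero with nonzero speed'' is robust under such splittings. Ensuring that the eigenvalue $\mu$ can be chosen to depend Lipschitz-continuously on $(t,z)$ even across multiplicity changes — using, e.g., min-max characterizations of eigenvalues of a smoothly varying family of symmetric operators, which are Lipschitz in the parameter even when eigenvalues collide — is the technical heart of the argument. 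Everything else (smoothness of $\Phi$ and $\Gamma$, compactness of $\partial\Omega$, the Morse index theorem) is standard and may be quoted.
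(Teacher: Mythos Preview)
Your approach via the Morse index form is genuinely different from the paper's. The paper works directly with $d(x)=\det dF$ in the special coordinates of Section~\ref{subsection: special coordinates} near a conjugate point of order $k$: from $D^\alpha d(0)=0$ for $|\alpha|<k$ and $\partial_{x_1}^k d(0)=1$ they apply the Malgrange preparation theorem to write $q\cdot d$ as a monic degree-$k$ polynomial in $x_1$ whose coefficients $l_i(x_2,\dots,x_n)$ vanish to order $i$, and from this extract the cone estimate $|x_1|\le C\max\{|x_2|,\dots,|x_n|\}$ for every nearby conjugate point. This is purely local and algebraic, uses no index-form machinery, and handles all multiplicities uniformly. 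The paper even remarks afterwards that a Maslov-cycle/transversality argument of the flavor you propose ``seems possible'' but requires ``some effort'' to bound the angle from below.

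There is a genuine gap in your proposal, and you have located it in the wrong place. Lipschitz dependence of min-max eigenvalues on a parameter is standard; that is \emph{not} the technical heart. The step you dismiss as ``classical'' --- that $\partial_t\mu<0$ \emph{uniformly} near a crossing --- is precisely the quantitative content of the lemma and does not follow from compactness alone. The qualitative fact (the index jumps, conjugate times are isolated along each geodesic) is classical; a uniform lower bound on the crossing speed is not. Moreover, at a conjugate point of multiplicity $m\ge 2$ the eigenvalues $\mu_j,\dots,\mu_{j+m-1}$ collide, $\mu_j$ is only Lipschitz in $(t,z)$, and $\partial_t\mu_j$ need not exist, so the implicit function theorem as you state it does not apply. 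What you actually need is a one-sided monotonicity estimate $\mu_j(t_2,z)-\mu_j(t_1,z)\le -c(t_2-t_1)$ uniform in $z$ and across multiplicity changes; combined with Lipschitz $z$-dependence this does yield a Lipschitz zero, but establishing that estimate is exactly the work the paper's preparation-theorem argument performs by other means. You should also note explicitly that the setting is Finsler, so the index-form apparatus you invoke needs to be justified in that generality.
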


\begin{lem}\label{rho is lipschitz}
The function $\rho_{S}:\partial \Omega \rightarrow \RR$ is Lipschitz continuous if $S$ is balanced.
\end{lem}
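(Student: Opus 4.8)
The plan is to show that $\rho_S$ cannot drop too fast by using the balanced condition to "continue" a characteristic hitting $S$ at $z_0$ to a nearby characteristic hitting $S$ at a time not much smaller. More precisely, fix $z_0 \in \partial\Omega$ with $\rho_S(z_0) = t_0 < \infty$ (if $\rho_S \equiv \infty$ there is nothing to prove, and the set where it is finite is what matters), and let $a = F(t_0, z_0) \in S$ be the first point where the characteristic from $z_0$ meets $S$. First I would record that for $s < t_0$, the point $p_s = F(s,z_0)$ lies in $\Omega\setminus S$ and carries the vector field $R_{p_s} = dF_{(s,z_0)}(r)$, which is the speed of the characteristic; as $s\to t_0$ these converge to a vector $X_\infty \in R_a$ tangent to the characteristic at $a$. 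The key analytic input is Lemma \ref{landa es Lipschitz}: the conjugate-time functions $\lambda_j$ are Lipschitz, so on a neighborhood of $z_0$ the map $F$ is a diffeomorphism on the region $\{(t,z): t < \lambda_1(z)\}$ up to a uniformly positive time, and in particular the characteristics depend smoothly (hence Lipschitz-uniformly) on $z$ on the relevant compact time interval — this prevents the geometry itself from being the source of any non-Lipschitz behavior.

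The heart of the argument is a contradiction scheme: suppose $\rho_S$ is not Lipschitz near $z_0$, so there is a sequence $z_i \to z_0$ with $\rho_S(z_i) \le t_i$ where $t_i \to t_0$ but $(t_0 - t_i)/d(z_i,z_0) \to \infty$ (with $t_i < t_0$; the case $t_i > t_0$ needs an upper bound on $\rho_S$, obtained symmetrically from the fact that $a\in S = \overline{\{\sharp R_p \ge 2\}}$ and nearby characteristics must also meet $S$ soon). Let $a_i = F(t_i, z_i) \in S$, so $a_i \to a$. Now I would examine the vector $v_{a_i}(a)$, the Finsler vector from $a_i$ to $a$: because $a$ and $a_i$ lie on characteristics from $z_0$ and $z_i$ that are $C^1$-close and $a_i$ is "behind" $a$ by an amount $t_0 - t_i$ that dominates the transverse displacement governed by $d(z_i, z_0)$, the direction $v_{a_i}(a)$ converges to the direction of $X_\infty$ — i.e. to the speed vector of the characteristic at $a$. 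Here is where I expect the main obstacle: making the claim "$v_{a_i}(a)\to$ direction of $X_\infty$" precise requires carefully estimating, in Finsler normal coordinates at $a$, the difference between $a_i$ and the point $F(t_i, z_0)$ on the \emph{same} characteristic, and showing it is $O(d(z_i,z_0))$ while $t_0 - t_i$ is of strictly larger order; this is essentially a quantitative version of the smooth dependence from the previous paragraph, and getting the orders of magnitude to line up cleanly is the delicate point.

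Once the direction of $v_{a_i}(a)$ is pinned down, I would apply the balanced condition at the point $a$ with the sequence $p_i := a_i \to a$, $v := \lim v_{a_i}(a) = $ (direction of) $X_\infty$, and with vectors $X_i \in R_{a_i}$ chosen to be the characteristic speeds at $a_i$ (these converge to $X_\infty \in R_a$). The balanced condition then forces the dual $w_\infty$ of $X_\infty$ to satisfy $w_\infty(v) = \max\{w(v) : w \text{ dual to some } R\in R_a\}$. But $v$ is (the direction of) $X_\infty$ itself, and for a Finsler norm the dual one-form $w_\infty$ of a vector $X_\infty$ satisfies $w_\infty(X_\infty) = \varphi(X_\infty)^2 > w(X_\infty)$ for every \emph{other} unit-speed-compatible covector $w$ by strict convexity — so the maximum is attained \emph{only} at $w_\infty$, i.e. only the covector dual to $X_\infty$ exposes the face in direction $X_\infty$. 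This is consistent and gives no contradiction yet; the contradiction comes from the fact that $a \in S$ with $\sharp R_a \ge 2$ forces a \emph{second} vector $X' \in R_a$, and by running the same construction slightly differently (choosing $X_i$ approaching $X'$ instead, which is possible since $a_i$ can be taken with $\sharp R_{a_i}\ge 2$ because $S$ is a split locus), the balanced condition would force $w'(v) = w_\infty(v)$ with $w'$ dual to $X'\ne X_\infty$, contradicting strict convexity. Hence $(t_0 - t_i)/d(z_i,z_0)$ cannot blow up, giving a local Lipschitz bound; a compactness argument on $\partial\Omega$ (where $\rho_S$ is finite, and handling the boundary of that set using that $\rho_S$ is bounded below by a positive constant via Lemma \ref{landa es Lipschitz}) upgrades this to a global Lipschitz constant.
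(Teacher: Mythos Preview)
Your approach differs from the paper's, and contains a genuine gap at conjugate points.

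The paper splits into two regimes: where $\rho_S(z)<\lambda_1(z)$ (handled by a cone argument via Lemma~\ref{the set and the cone}), and the delicate case where $\rho_S(z_0)=\lambda_1(z_0)$, which requires special coordinates near the order-$k$ conjugate point and a quantitative estimate bounding $\Vert F_x^\ast(\widehat{X}-\widehat{Y})\Vert$ by $C\Vert x-y\Vert^2$ for nearby preimages $x,y$ of the same point. Only after this work do the two Lipschitz constants combine with the Lipschitz bound on $\lambda_1$ to give the result.

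Your contradiction scheme essentially works only in the first regime. In the non-conjugate case your missing step can be filled: if both vectors in $R_{a_i}$ converged to $X_\infty$, then both preimages $(t_i,z_i)$ and $(t_i',z_i')$ in $Q_{a_i}$ would converge to $(t_0,z_0)$ (using the embedding $x\mapsto (F(x),dF_x(r))$), and since $F$ is a local diffeomorphism there they would have to coincide --- so the second vector must subconverge to some $X'\neq X_\infty$, and the balanced condition plus strict convexity gives your contradiction.

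But when $\rho_S(z_0)=\lambda_1(z_0)$, the point $(t_0,z_0)$ is conjugate and $F$ is \emph{not} a local diffeomorphism there. Now $Q_{a_i}$ can contain two distinct points both near $(t_0,z_0)$, and both associated vectors can converge to $X_\infty$; the balanced condition with approach direction $v=X_\infty$ then yields only the tautology $\widehat{X_\infty}(X_\infty)=\max$, and no contradiction. Your claim that ``$a_i$ can be taken with $\sharp R_{a_i}\ge 2$, so choose $X_i\to X'$'' does not survive here: there need not be any $X'\neq X_\infty$ in $R_a$ (edge points have $\sharp R_a=1$), and even when there is, nothing forces the \emph{specific} sequence $R_{a_i}$ to contain vectors approaching it. This is exactly the case that absorbs most of the effort in the paper, and your outline does not supply the replacement estimate.
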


Both results were proven in \cite{Itoh Tanaka 2} for Riemannian manifolds, and the second one was given in \cite{Li Nirenberg}. 
Thus, our results are not new for a cut locus, but the proof is different from the previous ones and may be of interest.
We have recently known of another proof that $\rho$ and $\lambda_1$ are Lipschitz (\cite{Castelpietra Rifford}).

\subsection{Examples}

Take as $\Omega$ any ring in a euclidean $n$-space bounded by two concentric spheres. Solve the Hamilton-Jacobi equations with $H(x,p)=\vert p\vert$ and $g=0$. The solution is the distance to the spheres, and the cut locus is the sphere concentric to the other two and equidistant from each of them. However, any sphere concentric to the other two and lying between them is a balanced split set, so there is a one parameter family of split balanced sets. When $n>2$, this situation is a typical application of \ref{maintheorem1}. In the $n=2$ case, there is also only one free parameter, which is in accord with \ref{maintheorem2}, as the rank of the $H_1$ homology space of the ring is one.


\begin{figure}[ht]
 \centering
 \includegraphics{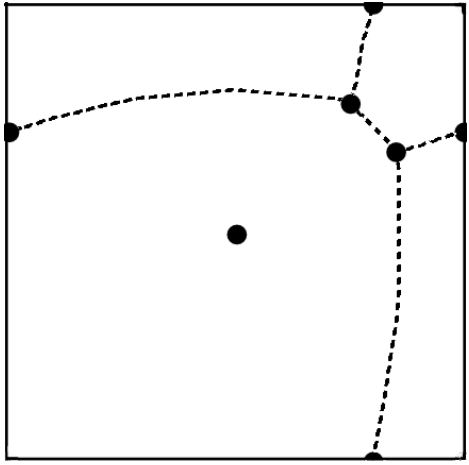}
 \caption{Balanced split set in a torus}
 \label{fig: Balanced split set in a euclidean torus}
\end{figure}

For a more interesting example, we study balanced split sets with respect to a point in a euclidean torus.
We take as a model the unit square in the euclidean plane with its borders identified.
It is equivalent to study the distance with respect to a point in this euclidean torus, or the solution to Hamilton-Jacobi equations with respect to a small distance sphere centered at the point with the Hamiltonian $H(p)=\vert p\vert$ and $g=0$.

A branch of cleave points (see \ref{structure up to codimension 3}) must keep constant the difference of the distances from either sides (recall prop 7.2 in \cite{Nosotros}, or read the beginning of section \ref{section:proof of the main theorems}). Moving to the covering plane of the torus, we see they must be segments of hyperbolas. A balanced split locus is the union of the cleave segments and a few triple or quadruple points. The set of all balanced split loci is a $2$-parameter family, as predicted by our theorem \ref{maintheorem2}.

\subsection{Extensions}
The techniques in this paper could be applied to other first order PDEs, or systems of PDEs.
In particular, we can mention the Cauchy problem with a Hamiltonian dependent on $t$, and both Cauchy and Dirichlet problems with a Hamiltonian dependent on $u$.
Characteristic curves are well behaved on those cases (though some extra hypothesis are needed for a Hamiltonian dependent on $u$).

In particular, we believe our proofs of \ref{landa es Lipschitz} and \ref{rho is lipschitz} are more easily extensible to other settings than the previous ones in the literature. 
This may simplify the task of proving that the singular locus for other PDEs have locally finite $n-1$ Hausdorff measure.

In this paper and its predecessor \cite{Nosotros} strong regularity assumptions were assumed. There are powerful reasons to weaken the regularity assumptions when studying Hamilton-Jacobi equations. The definition of a balanced split locus itself does not require strong regularity. Less regular data, though, could produce qualitatively different behavior. 
In the structure result \ref{structure up to codimension 3}, the dimensions of the sets of points of each type may become higher, as a consequence of the general Morse-Sard-Federer theorem (see \cite{Federer}).
Also, if $g$ is not $C^1$, we can expect non-trivial intersections between the singular set and $\partial \Omega$, or \emph{rarefaction waves}.
\section{Preliminaries}

\subsection{Definitions}
\begin{dfn}\label{riemannian metric at (x,v)} 
Let $v\in T_{p}\Omega$ be a tangent vector at $p$ in a Finsler manifold $(\Omega, \varphi)$. The \emph{Riemannian metric} at $(p,v)$ is given by:
$$
g_{(p,v)}(X,Y)=\frac{\partial^2\varphi}{\partial v^{i}v^{j}}(p,v)X^i Y^j
$$
\end{dfn}

\begin{dfn}\label{dual one form}
The \emph{dual one form} to a vector $X\in T_{p}\Omega$ with respect to a Finsler metric $\varphi $ is the unique one form $w \in T^{\ast}_{p}\Omega$ such that $w(X)=\varphi(X)^{2}$ and $w\vert_{H}=0$, where $H$ is the hyperplane tangent to the level set 
$$
\left\lbrace Y \in T_{p}\Omega, \varphi(Y) = \varphi(X) \right\rbrace 
$$
at $X$. It coincides with the usual definition of dual one form in Riemannian geometry.

For a vector field, the dual differential one-form is obtained by applying the above construction at every point.

We will often use the notation $\widehat{X}$ for the dual one-form to the vector $X$.
\end{dfn}

In coordinates, the dual one form $w$ to the vector $X$ is given by:
$$
w_{j}=\frac{\partial\varphi}{\partial v^{j}}(p,X)
$$
and also, in terms of the Riemannian metric at $(p,X)$:
$$
w(\cdot)=g_{(p,X)}(X,\cdot)
$$

With this notion of dual form, we can restate the usual equations for the characteristic vector field at points $p\in \partial\Omega $:
\begin{eqnarray}\label{equation for the characteristic vector field} 
 &\varphi_p(X_p)=1\notag\\
 &\widehat{X_p}\vert_{T(\partial\Omega)}=d g\notag\\
 &X_p\text{ points inwards}
\end{eqnarray}

\begin{dfn}\label{vector de x a y}
Whenever there is a unique unit speed minimizing geodesic $\gamma$ joining the points $p$ and $q$ in $\Omega$, we define, following \cite{Itoh Tanaka 2}, 
\begin{equation}
 v_{p}(q)=\dot{\gamma}(0)
\end{equation} 
For fixed $p\in  \Omega $, then any $q$ sufficiently close to $p$ is joined to $p$ by a unique unit speed minimizing geodesic, so $v_p(q)$ is well defined.

\end{dfn}


\begin{dfn}\label{conjugate}
Let $z\in \partial \Omega$ and $x=(t,z)\in V$.

We say $x$ is \emph{conjugate} iff $F$ is not a local diffeomorphism at $x$. The \emph{order} of conjugacy is the dimension of the kernel of $dF$.

We say $x$ is \emph{a first conjugate vector} iff no point $(s,x)$ for $s<t$ is conjugate.

\end{dfn}

We recall from \cite{Nosotros} a result on the structure of balanced split loci (in that paper, conjugate points are called focal points):

\begin{theorem}\label{structure up to codimension 3} 
A balanced split locus consists of the following types of points:

 \begin{itemize}
 \item\textbf{Cleave points:} Points at which $R_{p}$ consists of two non-conjugate vectors. The set of cleave points is a smooth hypersurface;
 \item \textbf{Edge points:} Points at which $R_{p}$ consists of exactly one conjugate vector of order 1. This is a set of Hausdorff dimension at most $n-2$;
 \item\textbf{Degenerate cleave points:} Points at which $R_{p}$ consists of two vectors, such that one of them is conjugate of order 1, and the other may be non-conjugate or conjugate of order 1. This is a set of Hausdorff dimension at most $n-2$;
 \item\textbf{Crossing points:} Points at which $R_{p}$ consists of non-conjugate and conjugate vectors of order 1, and $R^{\ast}_{p} $ spans an affine subspace of dimension $2$ ($R^{\ast}_{p} $ is the set of duals to vectors in $R_{p} $). This is a rectifiable set of dimension $n-2$;
 \item \textbf{Remainder:} A set of Hausdorff dimension at most $n-3$;
\end{itemize}
\end{theorem}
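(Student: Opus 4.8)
The plan is to stratify $S$ according to the local structure of the fibre $Q_p$ of the characteristic map $F$ over each $p\in S$, and to analyse each stratum with the tool suited to it: the inverse function theorem where $F$ is nonsingular, normal forms for conjugate points where it is not, and the balanced condition to control how $S$ sits relative to this \emph{characteristic skeleton}. To each $p\in S$ I attach the invariants $\#R_p$; for each $x\in Q_p$, whether $x$ is conjugate and, if so, its order and whether it is a first conjugate vector (Definition~\ref{conjugate}); and the dimension of the affine hull of $R^{\ast}_p$. The five listed types are exactly the combinations of these invariants that can occur on a set of Hausdorff dimension $\geq n-2$. The first case is non-conjugate preimages: if $x=(t_0,z_0)\in Q_p$ is non-conjugate then $F$ is a local diffeomorphism near $x$, so $h_x:=(g(z)+t)\circ F^{-1}$ is a smooth local solution of \eqref{HJequation} with $dh_x=\widehat R$, $R=dF_x(r_x)$. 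At a cleave point $Q_p=\{x_1,x_2\}$, $R_p=\{R_1,R_2\}$ with $R_1\neq R_2$, and since $X\mapsto\widehat X$ is injective, $d(h_{x_1}-h_{x_2})(p)\neq 0$; one then shows that in a neighbourhood of $p$ one has $S=\{h_{x_1}=h_{x_2}\}$ --- both inclusions coming from the splitting property together with the balanced condition, which forces on each side the branch with the smaller value to be the one in $A(S)$ --- and the implicit function theorem produces the smooth hypersurface. Running the same computation at a point with three non-conjugate preimages whose duals are affinely independent spanning a $2$-plane gives the rectifiable $(n-2)$-dimensional part of the crossing stratum, where $S$ is locally the union of three pairwise coincidence hypersurfaces meeting transversally.

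The remaining strata involve conjugate preimages, and the key input is genericity. Writing conjugacy as the vanishing of $\det DF$ on $V$ and combining the structure of the Jacobi equation with a Morse--Sard--Federer argument (\cite{Federer}), the set of conjugate vectors of order $\geq 2$, of non-first conjugate vectors, and of first conjugate vectors at which the $1$-dimensional $\ker DF$ is tangent to the first conjugate hypersurface $C_1\subset V$ (which is Lipschitz by Lemma~\ref{landa es Lipschitz}), together form a subset of $V$ of dimension $\leq n-2$, whose image under the Lipschitz map $F$ has dimension $\leq n-2$; a second application of the same principle pushes the genuinely degenerate part down to dimension $\leq n-3$. On the complement $F$ is a fold near $x$, and one argues: (i) an edge point ($Q_p=\{x\}$, $x$ of order $1$, $p\in S$) forces a degenerate fold configuration, because at a transverse fold nearby points have two local preimages and $p$ could not then lie in $S$ with $\#R_p=1$, so edge points lie over a set of dimension $\leq n-2$; (ii) a degenerate cleave point ($Q_p=\{x_1,x_2\}$, at least one a fold point) lies in the $F$-image of the $(n-1)$-dimensional locus matching a smooth branch to a point of $C_1$, intersected with the coincidence equation, hence an $(n-2)$-dimensional set; (iii) a crossing point carrying a conjugate vector is treated identically, the condition that the affine hull of $R^{\ast}_p$ be $2$-dimensional cutting out the rectifiable $(n-2)$-dimensional piece. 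The Lipschitz regularity of $\rho_S$ (Lemma~\ref{rho is lipschitz}) and of the $\lambda_j$ is what keeps these strata rectifiable when a conjugate-locus sheet, which is only Lipschitz, enters.

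Everything not captured above --- conjugacy order $\geq 2$, non-first conjugate vectors, four or more branches, $\geq 3$ branches whose duals lie in an affine line or span dimension $\geq 3$, and tangencies of $C_1$ to kernel directions --- is the Remainder, and the dimension estimates above are arranged so that it lands inside a single set of Hausdorff dimension $\leq n-3$; this step also needs an a priori finiteness statement for $Q_p$ off an exceptional set, obtained by a compactness argument. I expect the crux of the whole proof to be twofold. First, showing that near each generic point $S$ actually coincides with the model prescribed by the characteristics: a priori a balanced split locus can be far more irregular than its characteristic skeleton, and excluding this is precisely where the balanced condition is indispensable and where the argument is delicate --- one runs a local version of the first-variation/semiconcavity argument and splices it to the uniqueness built into the splitting property. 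Second, organising the Morse--Sard--Federer bookkeeping so that all the simultaneous degeneracies land together at codimension $\geq 3$. Both points rest on the smoothness of $H$, $g$ and $F$.
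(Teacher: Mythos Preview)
This theorem is not proved in the present paper: it is explicitly recalled from the authors' earlier work \cite{Nosotros} (see the sentence immediately preceding the statement). There is therefore no proof here against which to compare your proposal. The overall strategy you outline --- stratify by the invariants of $Q_p$, use the implicit function theorem at non-conjugate preimages, normal forms at conjugate ones, and Morse--Sard--Federer for the dimension bookkeeping --- is plausible and presumably close in spirit to what \cite{Nosotros} does, but that cannot be confirmed from this paper alone.

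Independently of that, your sketch contains a substantive error. At a cleave point you assert that locally $S=\{h_{x_1}=h_{x_2}\}$, justified by the claim that the balanced condition ``forces on each side the branch with the smaller value to be the one in $A(S)$''. That minimality criterion is the \emph{viscosity} characterisation (Lax--Oleinik, formula \eqref{LaxOleinik}), not the balanced condition. The entire thrust of this paper is that there exist balanced split loci other than $Sing$ (Theorems \ref{maintheorem1} and \ref{maintheorem2}; see also the explicit ring example in Section 2.3), and for these the associated $h$ of Definition \ref{u associated to S} is discontinuous across $S$: one has $h_1\neq h_2$ on cleave components, which is precisely what makes the current $T$ of \eqref{definition of T} nonzero. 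What the balanced condition actually gives is $\widehat{X_1}(v)=\widehat{X_2}(v)$ for $v$ tangent to $S$, i.e.\ $h_1-h_2$ is locally \emph{constant} along the cleave set (this is spelled out at the start of Section \ref{section:proof of the main theorems}). Since $d(h_1-h_2)=\widehat{X_1}-\widehat{X_2}\neq 0$, the implicit function theorem still produces a smooth hypersurface --- so your conclusion survives, but the mechanism you invoke does not.
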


%

\subsection{Special coordinates}\label{subsection: special coordinates}
In \cite{Nosotros}, we used only a few properties of the exponential map essentially introduced in \cite{Warner}. Those properties, stated in proposition 8.3 of \cite{Nosotros}, were shown enough to guarantee the existence of special coordinates for $F$ near a conjugate point of order $k$ (see the paragraph on special coordinates before theorem 6.3 of \cite{Nosotros}). Near a point $x^0\in V$ and its image $F(x^0)\in \Omega$, we can find coordinates such that $x^0$ has coordinates $0$, and $F$ is written as
\begin{equation}\label{F near order k in special coords} 
 F(x_{1},\dots,x_{n})=(x_{1},\dots,x_{n-k},F_{n-k+1}(x),\dots,F_{n}(x))
\end{equation} 
where 
\begin{itemize}
\item $\frac{\partial}{\partial x_i} F_j(x^0)$ is $0$ for any $i$ and $n-k+1\leq j\leq n$, 
\item $\frac{\partial}{\partial x_i} \frac{\partial}{\partial x_1} F_j(x^0)$ is $\delta^i_j$, for $n-k+1\leq i\leq n$ and $n-k+1\leq j\leq n$.
\item $\frac{\partial}{\partial x_1} (x^0) = r_{x^0} $
\end{itemize}
%
%
%
%

\subsection{Lagrangian submanifolds of $T^\ast \Omega$}\label{subsection: lagrangian}
Let $D$ be the homeomorphism between $T \Omega$ and $T^\ast \Omega$ induced by the Finsler metric as in definition \ref{dual one form} ($D$ is actually a $C^{\infty}$ diffeomorphism away from the zero section). We define a map:
\begin{equation}\label{definition of the embedding of V into TastOmega} 
\Delta(t,z) = D(\Phi(t,\Gamma(z)))
\end{equation} 
and a subset of $T^\ast\Omega$:
\begin{equation} \label{definition of Theta} 
\Theta = \Delta(V)
\end{equation} 
where $\Phi $ is the geodesic flow in $T\Omega$. This is a smooth $n$-submanifold of $T^\ast \Omega$ with boundary.

It is a standard fact that, for a function $u:\Omega\rightarrow \RR$, the graph of its differential $du$ is a Lagrangian submanifold of $T^\ast \Omega$. The subset of $\Theta$ corresponding to small $t$  is the graph of the differential of the solution $u$ to the HJ equations by characteristics. Indeed, all of $\Theta$ is a lagrangian submanifold of $T^\ast \Omega$ (see \cite{Duistermaat}).

We can also carry over the geodesic vector field from $T \Omega$ into $T^\ast \Omega$ (outside the zero sections). This vector field in $T^\ast \Omega$ is tangent to $\Theta$. Then, as we follow an integral curve $\gamma(t)$ within $\Theta$, the tangent space to $\Theta$ describes a curve $\lambda(t)$ in the bundle $G$ of lagrangian subspaces of $T^\ast \Omega$.  It is a standard fact that the vector subspace $\lambda(t)\subset T^\ast_{\gamma(t)} \Omega $ intersects the vertical subspace of $T^\ast_{\gamma(t)}\Omega$ in a non-trivial subspace for a discrete set of times. We will review this fact, in elementary terms, and prove a lemma that will be important for the proof of lemma \ref{rho is lipschitz}.

Let  $ \eta(t)$ be an integral curve of $r$ with $x_0=\eta(0)$ a conjugate point of order $k$. In special coordinates near $x_0$, for $t$ close to $0$, the differential of $F$ along $\eta$ has the form:
$$
dF(\eta(t))=
\begin{pmatrix}
I_{n-k} & 0 \\ \ast & \ast
\end{pmatrix}
=\begin{pmatrix}
I_{n-k} & 0 \\ 0 & 0
\end{pmatrix}
+t\begin{pmatrix}
0 & 0 \\ 0 & I_k
\end{pmatrix}
+\begin{pmatrix}
0 & 0 \\ \ast & R(t)
\end{pmatrix}+\begin{pmatrix}
0 & 0 \\ \ast & E(t)
\end{pmatrix}
$$
where $\vert R'(t)\vert <\varepsilon $ and $\vert E\vert <\varepsilon$, with $E=0$ if $\gamma(0)=x_0$.

Let $w\in\ker dF(\eta(t_1))$ and $v\in\ker dF(\eta(t_2))$ be unit vectors in the kernel of $dF$ for $t_1<t_2 $ close to $0$. 
It follows that both $v$ and $w$ are spanned by the last $k$ coordinates.
We then find:
$$
0 = w \cdot dF(\eta(t_2))\cdot v-v \cdot dF(\eta(t_1))\cdot w = 
(t_2-t_1)w\cdot v + w(R(t_2)-R(t_1))v + w(E(t_2)-E(t_1))v
$$
and it follows (for some $t_1<t^\ast<t_2 $):
$$
(t_2-t_1)w\cdot v < |w| |v| (|R'(t^\ast) |+2\varepsilon) (t_2-t_1) < 3\varepsilon (t_2-t_1)
$$
or
\begin{equation}\label{almost orthogonal basis} 
 w\cdot v < 3\varepsilon
\end{equation} 
This also shows that the set of $t$'s such that $dF(\eta(t))$ is singular is discrete.

Say the point $x_0=(z_0,t_0)$ is the $j$-th conjugate point along the integral curve of $r$ through $x_0$ from $z_0$, and recall that it is of order $k$ as conjugate point. As $z$ moves towards $z_0$, all functions $\lambda_j(z),\dots,\lambda_{j+k}(z)$ converge to $t_0 $. Let $z_i$ be a sequence of points converging to $z_0$ such that the integral curve through $z_i$ meets its $k$ conjugate points near $z_0$ at $M$ linear subspaces (e.g. $\lambda_j(z_i)=\dots=\lambda_{j+k_1}(z_i)$; $\lambda_{j+k_1+1}(z_i)=\dots=\lambda_{j+k_2}(z_i)$; ...; $\lambda_{j+k_{M-1}+1}(z_i)=\dots=\lambda_{j+k_M}(z_i)$). we get the following theorem (see also lemma 1.1 in \cite{Itoh Tanaka 2}):
\begin{lem}
The subspaces $\ker d_{(\lambda_{j+k_l}(z_i),z_i)}F$ for $l=1,\dots, M$ converge to orthogonal subspaces of $\ker d_{(\lambda_{j}(z_0),z_0)}F $, for the standard inner product in the special coordinates at the point $(\lambda_{j}(z_0),z_0) $.
\end{lem}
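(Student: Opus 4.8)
The plan is to exploit the explicit matrix form of $dF(\eta(t))$ along integral curves of $r$ recorded just above the statement, combining it with the almost-orthogonality estimate \eqref{almost orthogonal basis}. Fix the special coordinates at the limiting conjugate point $x_0=(\lambda_j(z_0),z_0)$ of order $k$; after the change of coordinates all relevant kernels sit in the last $k$ coordinate directions, so everything reduces to linear algebra in $\RR^k$. First I would set up, for each $z_i$, unit vectors spanning the kernels $\ker d_{(\lambda_{j+k_l}(z_i),z_i)}F$, one block per group of coincident conjugate values, and pass to a subsequence so that each of these (finitely many) unit vectors converges; I must check that the limits are nonzero (they are unit vectors) and that they lie in $\ker d_{(\lambda_j(z_0),z_0)}F$, which follows from continuity of $dF$ together with $\lambda_{j+k_l}(z_i)\to\lambda_j(z_0)$ (here I use Lemma \ref{landa es Lipschitz}, or at least the convergence statement already established). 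So the limits are a collection of $M$ unit vectors in the $k$-dimensional space $\ker d_{(\lambda_j(z_0),z_0)}F$.

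Next I would run the bilinear-form argument that produced \eqref{almost orthogonal basis}, but now with the two kernel vectors attached to \emph{different} conjugate levels along the \emph{same} integral curve through $z_i$. Taking $w\in\ker d_{(\lambda_{j+k_l}(z_i),z_i)}F$ and $v\in\ker d_{(\lambda_{j+k_m}(z_i),z_i)}F$ with $l<m$, the identity
$$
0 = w\cdot dF(\eta(t_2))\cdot v - v\cdot dF(\eta(t_1))\cdot w
$$
with $t_1=\lambda_{j+k_l}(z_i)$, $t_2=\lambda_{j+k_m}(z_i)$ gives, using the decomposition of $dF$ into the $tI_k$ term plus the $R(t)$ and $E(t)$ terms with $|R'|<\varepsilon$, $|E|<\varepsilon$,
$$
(t_2-t_1)\, w\cdot v = -\,w\,(R(t_2)-R(t_1))\,v - w\,(E(t_2)-E(t_1))\,v,
$$
hence $|w\cdot v| \le 3\varepsilon\,|w||v|$ exactly as before — the point being that the estimate did not use $t_1=t_2$, only that both times are close to $0$. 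Since $\varepsilon>0$ is arbitrary once the special coordinates are localized enough around $x_0$ (shrinking the neighbourhood forces $\varepsilon\to 0$), and since $t_2-t_1\ne 0$ whenever $k_l\ne k_m$ (distinct groups correspond to distinct $\lambda$-values along that curve), dividing by $t_2-t_1$ and passing to the limit $i\to\infty$ forces the limiting unit vectors from different blocks to be orthogonal in the standard inner product of the special coordinates.

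The main obstacle is bookkeeping rather than a single hard idea: one must be careful that "$\varepsilon$ small" and "$z_i$ close to $z_0$" are compatible quantifiers — i.e. fix the neighbourhood (hence $\varepsilon$) first, note the conclusion $|w\cdot v|\le 3\varepsilon$ holds for all $i$ large, then let the neighbourhood shrink — and that within a single block $\lambda_{j+k_{l-1}+1}(z_i)=\dots=\lambda_{j+k_l}(z_i)$ one is not claiming orthogonality, only across blocks. One also needs the fact, already established in the paragraph preceding the statement, that the set of singular times is discrete, so that for large $i$ the times $\lambda_{j+k_l}(z_i)$ are genuinely separated between distinct blocks and all lie in the coordinate patch. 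Finally, a short dimension remark: the $M$ limiting unit vectors are mutually orthogonal hence linearly independent, consistent with $M\le k=\dim\ker d_{(\lambda_j(z_0),z_0)}F$, which also confirms the subsequence extraction is harmless. Assembling these observations completes the proof.
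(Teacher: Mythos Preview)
Your proposal is correct and follows essentially the same route as the paper: the lemma is stated immediately after the derivation of the estimate \eqref{almost orthogonal basis} and is intended as a direct consequence of that computation applied to nearby integral curves, which is precisely what you do. The paper gives no separate proof beyond the preceding paragraph, so your write-up is exactly the natural expansion of the argument the authors leave implicit; your care with the order of quantifiers (fix the neighbourhood and hence $\varepsilon$, then send $i\to\infty$) and the remark that orthogonality is only claimed across blocks, not within a block, are the right clarifications.
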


\subsection{A useful lemma}

\begin{lem}\label{the set and the cone}
Let $U$ be an open set in $\Rn$, $A\subset U$ a proper open set, $C^+$ an open cone, $V\subset U$ an arbitrary open set and $\varepsilon>0$ such that at any point $q\in \partial A\cap V$, we have $(q+C^+)\cap (q+B_{\varepsilon}) \subset A$.

Then $\partial A\cap V$ is a Lipschitz hypersurface. Moreover, for any vector $X\in C^+$, take coordinates so that $X=\frac{\partial}{\partial x_{1}}$. Then $\partial A\cap V$ is a graph $S=\lbrace (h(x_{2},..,x_{n}),x_{2},..,x_{n})\rbrace$ for a Lipschitz function $h$.
\end{lem}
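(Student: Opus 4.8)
The plan is to work entirely in the coordinates adapted to the fixed vector $X\in C^+$, so write $X=\frac{\partial}{\partial x_1}$ and split coordinates as $q=(x_1,x') $ with $x'=(x_2,\dots,x_n)$. The cone hypothesis says that from any boundary point $q\in\partial A\cap V$ one can move in any direction of $C^+$ (for a short distance $\varepsilon$) and stay inside $A$; in particular, since $X\in C^+$, moving in the positive $x_1$ direction keeps us in $A$, and moving in the negative $x_1$ direction (which lies in $-C^+$, disjoint from $\overline{A}$ locally near $q$ because $A$ is open and the $-C^+$ cone from a nearby boundary point avoids $A$) keeps us in the complement. First I would make this precise: I claim that for each $q\in\partial A\cap V$ there is a neighborhood on which $\partial A$ is exactly a graph $x_1=h(x')$, with $A$ lying above and $U\setminus\overline A$ below. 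The key local fact is monotonicity of membership in $A$ along the $x_1$-direction: if $(x_1,x')\in A$ and is close enough to $q$, then $(s,x')\in A$ for all $s\in(x_1,x_1+\delta)$ — this is immediate from applying the cone condition not at $q$ but at the boundary point of $A$ encountered along the segment, or more cleanly by a connectedness/openness argument on the fiber.

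Next I would define $h(x') = \inf\{ x_1 : (x_1,x')\in A\}$ on the relevant slab of $x'$-values near the projection of $q$, and show (i) the inf is attained and $(h(x'),x')\in\partial A$, using that $A$ is open (so the set of admissible $x_1$ is open) and that its complement is nonempty near $q$ (so the inf is finite and the point is a genuine boundary point); (ii) above the graph we are in $A$ and below we are in the complement, which is the monotonicity just discussed together with the cone condition applied at $(h(x'),x')$ to cover the points immediately above; (iii) the graph is precisely $\partial A\cap V$ locally.

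For the Lipschitz bound — the heart of the statement — I would use the openness of the cone $C^+$: pick an angle $\theta>0$ and $r>0$ so that the solid cone $\{ q + tv : v\in C^+,\ \|v\|=1,\ 0<t<\varepsilon\}$ contains a full solid cone of opening $\theta$ around the $x_1$-axis. Then for two boundary points $(h(x'),x')$ and $(h(y'),y')$ with $|x'-y'|$ small: the point $(h(x'),x')$ has an $\varepsilon$-cone neighborhood (opening $\theta$ about $+x_1$) inside $A$, so the translate of $(h(x'),x')$ to $x'$-coordinate $y'$ must have $x_1$-coordinate $\ge h(y')$ as soon as it lies in that cone, giving $h(y') \le h(x') + L|x'-y'|$ with $L=\cot\theta$; applying the symmetric argument with the complement (using that $-C^+$ points from boundary points of $A$ into $U\setminus\overline A$, which follows since $A$ is open and its boundary points have the one-sided cone property) gives the reverse inequality $h(x')\le h(y')+L|x'-y'|$. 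Hence $h$ is $L$-Lipschitz, $\partial A\cap V = \{(h(x'),x')\}$ is a Lipschitz graph, and being a graph of a Lipschitz function over a hyperplane it is a Lipschitz hypersurface.

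The main obstacle I anticipate is the bookkeeping at the "below the graph" side: the cone condition is only assumed at points of $\partial A$, not at interior or exterior points, so to conclude that points just below $(h(x'),x')$ lie in the complement I must argue that no point of $A$ can sit below a boundary point along the $x_1$-fiber — i.e., rule out $A$ re-entering after a gap. This is where I would lean on a clean connectedness argument: the slice $A\cap\{x'=\text{const}\}$ near $q$ is, by the monotonicity consequence of the cone condition, upward-closed (an interval $(h(x'),\infty)$ locally), because any point of $A$ in the slice drags an upward half-line with it via repeated application of the cone condition at successive boundary crossings — but there are no interior boundary crossings precisely because upward-closedness is self-propagating. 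Making this rigorous without circularity (e.g. by a continuity/maximality argument on the connected component of $q$ in $V$, or by shrinking $V$ first so that $\partial A\cap V$ is contained in a single small ball where the cone condition forces a product-like structure) is the one genuinely delicate point; everything else is the standard "cone implies Lipschitz graph" reasoning.
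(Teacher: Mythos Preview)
Your proposal is correct and follows essentially the same route as the paper: fix $X\in C^+$, show that on each $x_1$-fibre there is at most one point of $\partial A\cap V$ (above it one is in $A$, below one is not), and then read off the Lipschitz constant from the opening angle of $C^+$.

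Two small remarks. First, your use of the reversed cone $-C^+$ to obtain the inequality $h(x')\le h(y')+L|x'-y'|$ is an unnecessary detour: the paper simply applies the forward cone $(q+C^+)\cap B_\varepsilon(q)\subset A$ at the \emph{other} boundary point $(h(y'),y')$ and swaps the roles of $x'$ and $y'$, which gives both inequalities with no appeal to the complement. Second, the ``below the graph'' subtlety you rightly flag is dispatched in the paper in one line: if $q'=p+tX\in A$ for some $-\varepsilon<t<0$, then the open set $q'+(C^+\cap B_\varepsilon)$ contains a full neighbourhood of $p$ and hence points outside $A$; since it also contains points of $A$ arbitrarily close to $q'$, it contains a point $q''\in\partial A$, and the cone condition at $q''$ then forces a neighbourhood of $p$ into $A$, contradicting $p\in\partial A$. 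This is exactly the connectedness argument you sketch, carried out once rather than iterated, so there is no circularity to worry about.
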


\begin{proof}
Choose the vector $X\in C^+$ and coordinate system in the statement. Assume $X$ has norm $1$, so that $q+t X\in q+B_t$ for small positive $t$. Take any point $p\in \partial A\cap V$. We notice that all points $p+t\frac{\partial}{\partial x_{1}}$ for $0<t<\varepsilon$ belong to $A$, and all points $p+t\frac{\partial}{\partial x_{1}}$ for $-\varepsilon<t<0$ belongs to $U\setminus A$.
Indeed, there cannot be a point $p+t\frac{\partial}{\partial x_{1}}\in A$ for $-\varepsilon<t<0$ because the set $(p+ t\frac{\partial}{\partial x_{1}})+ (C^+\cap B_{\varepsilon})$ contains an open neighborhood of $p$, which contains points not in $A$. 
In particular, there is at most one point of $\partial A\cap V$ in each line with direction vector $\frac{\partial}{\partial x_{1}}$.

Take two points $q_{1},q_{2}\in \Rnuno$ sufficiently close and consider the lines $L_{1}=\lbrace (t,q_{1}), t\in \RR \rbrace$ and $L_{2}=\lbrace (t,q_{2}), t\in \RR\rbrace$. Assume there is a $t_{1}$ such that $(t_{1},q_{1})$ belongs to $\partial A$.
If there is no point of $\partial A$ in $L_{2}$ then either all points of $L_{2}$ belong to $A$ or they belong to $U\setminus A$.
Both of these options lead to a contradiction if $((t_{1},q_{1})+C^+)\cap ((t_{1},q_{1})+ B_{\varepsilon})\cap L_{2} \neq \emptyset$ (this condition is equivalent to $K\vert q_{1}-q_{2}\vert <\varepsilon$ for a constant $K$ that depends on $C^+$ and the choice of $X\in C^+$ and the coordinate system).

Thus there is a point $(t_{2},q_{2})\in \partial A$. 
For the constant $K$ above and $t\geq t_{1}+K\vert q_{1}-q_{2}\vert$, the point $(t,q_{2})$ lies in the set $(t_{1},q_{1})+C^+$,
so we have
$$
t_{2}<t_{1}+K\vert q_{1}-q_{2}\vert
$$
The points $q_{1}$ and $q_{2}$ are arbitrary, and the lemma follows.
\end{proof}

\subsection{Some generalities on HJ equations.}

\begin{lem}
 For fixed $\Omega$ and $H$, two functions $g,g':\partial\Omega\rightarrow \RR$ have the same characteristic vector field in $\partial\Omega$ iff $g'$ can be obtained from $g$ by addition of a constant at each connected component of $\partial\Omega$.
\end{lem}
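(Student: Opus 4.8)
The plan is to read off from the defining equations (\ref{equation for the characteristic vector field}) of the characteristic vector field that, at a point $z\in\partial\Omega$, the vector $\Gamma(z)$ depends only on $dg_z$, and conversely determines $dg_z$. Granting this, the lemma is immediate: $g$ and $g'$ have the same characteristic vector field iff $dg=dg'$ on $\partial\Omega$, iff $d(g-g')\equiv 0$, iff $g-g'$ is locally constant, i.e.\ constant on each connected component of $\partial\Omega$. The easy half needs nothing more: if $g$ and $g'$ share the characteristic vector field $\Gamma$, then by the second line of (\ref{equation for the characteristic vector field}), $dg_z=\widehat{\Gamma(z)}\vert_{T_z\partial\Omega}=dg'_z$ for every $z$, so $g-g'$ is constant on each component. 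Conversely, if $g'=g+a$ with $a$ locally constant then $dg'=dg$, and the claim reduces to the uniqueness statement below applied at every $z$.

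The substance is therefore the \emph{uniqueness} assertion: for fixed $z$, there is at most one $X\in T_z\Omega$ with $\varphi_z(X)=1$, $\widehat X\vert_{T_z\partial\Omega}=dg_z$, and $X$ pointing inward. I would argue via Finsler duality and strict convexity. By standard Legendre duality the map $D$ carries the unit sphere $\{\varphi_z=1\}$ bijectively onto $K:=H^{-1}(1)\cap T^\ast_z\Omega=\partial B$, where $B=\{H\le 1\}\cap T^\ast_z\Omega$ is strictly convex; moreover $\widehat X$ is the outer supporting conormal of $B$ at $\widehat X$, since $\widehat X(X)=\varphi_z(X)^2=1$ while $\beta(X)\le \varphi_z(X)H(\beta)\le 1$ for all $\beta\in B$ (using that $\varphi_z$ is the dual norm of $H$). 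Now choose a local defining function $f$ with $\Omega=\{f\ge 0\}$, $f(z)=0$ near $z$, so that $T_z\partial\Omega=\ker df_z$ and ``$X$ points inward'' means $df_z(X)>0$. If $X,X'$ both satisfy the three conditions, their duals $\alpha=\widehat X$ and $\alpha'=\widehat{X'}$ lie in $K$ and agree on $T_z\partial\Omega$, so $\alpha-\alpha'$ is a multiple of $df_z$, i.e.\ $\alpha,\alpha'$ lie on a line $L$ with direction $df_z$. If $\alpha\ne\alpha'$, strict convexity of $B$ forces $L\cap B=[\alpha,\alpha']$ to be a genuine chord crossing the interior; at its two endpoints the outer conormals $X,X'$ then satisfy $df_z(X)>0>df_z(X')$ (the line enters $B$ at one endpoint and exits at the other). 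Hence $X$ and $X'$ cannot both point inward, a contradiction, so $X=X'$.

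Assembling the two pieces finishes the proof: given $g'=g+a$ with $a$ constant on each boundary component, we have $dg'_z=dg_z$ for all $z$, so the vector $\Gamma(z)$ attached to $g$ satisfies all three conditions in (\ref{equation for the characteristic vector field}) with $dg$ replaced by $dg'$, and by the uniqueness just established it is the characteristic vector of $g'$ as well; the reverse inclusion was the easy half. The main obstacle is the uniqueness step; within it the only delicate point is identifying $\widehat X$ with the outer conormal of $B$ and extracting the \emph{strict} sign change of $df_z$ across the chord from strict convexity, but once the convex-geometry picture is in place this is routine.
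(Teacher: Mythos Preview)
Your proof is correct and follows the same line as the paper: both reduce the question to the equivalence ``same characteristic vector field $\Longleftrightarrow dg=dg'$ on $\partial\Omega$,'' which in turn is equivalent to $g-g'$ being locally constant. The paper's own proof is a single sentence asserting that this equivalence ``follows from (\ref{equation for the characteristic vector field}),'' taking for granted that the three conditions there determine the characteristic vector uniquely (this is standard in Hamilton--Jacobi theory, and is implicit earlier in the paper when it says the characteristic vector field is determined by $H$ and $g$).

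What you add is a self-contained argument for that uniqueness via Legendre duality and the strict convexity of $B=\{H\le 1\}\cap T^\ast_z\Omega$: the two putative duals $\alpha,\alpha'$ lie on a line with direction $df_z$, and strict convexity forces the supporting functionals $X,X'$ to have opposite signs on $df_z$, so they cannot both point inward. This is a nice and correct convex-geometry proof; the paper simply does not bother to justify this step. One very minor wording slip: where you write ``$\widehat X$ is the outer supporting conormal of $B$ at $\widehat X$,'' you mean that $X$ (viewed as a linear functional on $T^\ast_z\Omega$) is the outer supporting conormal at the point $\widehat X\in\partial B$; your justification $\beta(X)\le\varphi_z(X)H(\beta)\le 1$ makes this clear.
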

\begin{proof} It follows from (\ref{equation for the characteristic vector field}) that $g$ and $g'$ have the same characteristic vector field at all points if and only $d g=d g'$ at all points.
\end{proof}

For our next definition, observe that given $\Omega$, $H$ and $g$, we can define a map $\tilde{u}: V\rightarrow\RR $ by 
$\tilde{u} (t,z)=t+g(z)$.
\begin{dfn}\label{made of characteristics} 
We say that a function $u: \Omega\rightarrow\RR$ is \emph{made from characteristics} iff $u\vert_{\partial\Omega}=g $ and $u$ can be written as $u(p)=\tilde{u}\circ s$ for a (not necessarily discontinuous) section $s$ of $F:V\rightarrow \Omega $.

\end{dfn}

\paragraph{Remark.} In the paper \cite{Menucci}, the same idea is expressed in different terms: all characteristics are used to build a multi-valued solution, and then some criterion is used to select a one-valued solution. The criterion used there is to select the characteristic with the minimum value of $\tilde{u}$.

\begin{lem}\label{una unica solucion continua por caracteristicas}
The viscosity solution to (\ref{HJequation}) and (\ref{HJboundarydata}) is the unique continuous function that is made from characteristics.
\end{lem}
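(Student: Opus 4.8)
The plan is to prove two things: first, that the function $h$ obtained from the set $S = Sing$ via Definition~\ref{u associated to S} is continuous and made from characteristics (so existence holds, since $h = u$ as noted right after Definition~\ref{split locus}), and second, uniqueness, which is the substantive part. For existence it suffices to recall that $Sing$ splits $\Omega$, so $F$ restricts to a bijection $A(Sing)\to\Omega\setminus Sing$; the section $s$ on $\Omega\setminus Sing$ sending $p$ to $(F|_{A(Sing)})^{-1}(p)$ exhibits $u$ as made from characteristics on the complement of $Sing$, and since $Sing$ has empty interior and $u$ is continuous (being the viscosity solution given by the Lax--Oleinik formula \eqref{LaxOleinik}), one extends $s$ arbitrarily over $Sing$. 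The identity $u(p) = d(p,q)+g(q) = t + g(z)$ along the minimizing characteristic through $p\notin Sing$ is exactly the statement that $u = \tilde u\circ s$ there, so $u$ is made from characteristics and continuous.

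For uniqueness, suppose $w:\Omega\to\RR$ is continuous and made from characteristics, say $w = \tilde u\circ s$ for a section $s$ of $F$. The key point is that along any projected characteristic $F(\cdot,z)$, as long as it is \emph{minimizing}, the value $\tilde u(t,z) = t + g(z)$ agrees with the Lax--Oleinik value $u(F(t,z))$; and more importantly, for small $t$ the characteristic curves foliate a neighborhood of $\partial\Omega$ and $F$ is a diffeomorphism there, so $s$ is forced to take the standard values near $\partial\Omega$, whence $w = u$ near $\partial\Omega$. I would then argue that the set $\{p : w(p) = u(p)\}$ is open: near a point $p$ where $w(p)=u(p)$, write $p = F(t_0,z_0)$ with $(t_0,z_0) = s(p)$; I claim $(t_0,z_0)\in A(Sing)$, i.e. the characteristic up to $p$ does not hit $Sing$ and is the minimizing one, because if it did hit $Sing$ the value $t_0 + g(z_0)$ would exceed $u(p)$ by the strict minimality in \eqref{LaxOleinik} at the singular point, contradicting $w(p)=u(p)$ once one propagates; hence by openness of $A(Sing)$ and continuity of $F$, nearby points $p'$ also have their section value in $A(Sing)$, forcing $w(p') = \tilde u(s(p')) = u(p')$. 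Combined with the fact that $\{w = u\}$ is trivially closed (both continuous) and nonempty (contains a neighborhood of $\partial\Omega$), connectedness of $\Omega$ gives $w \equiv u$.

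The main obstacle is the claim that a section value $(t_0,z_0) = s(p)$ realizing $w(p) = u(p)$ must in fact lie in $\overline{A(Sing)}$, rather than corresponding to a non-minimizing characteristic that happens to give the right value at the isolated point $p$ but wrong values nearby. The delicate case is when the characteristic to $p$ is minimizing but $p$ itself is a cut point (so $p\in Sing$), or when several characteristics reach $p$ with the same value; here I would use that $Sing$ has empty interior together with the continuity of $w$ to pass to nearby regular points where the section, being a section of $F$ with $F$ a local diffeomorphism off the conjugate locus, is locally determined, and then invoke that on $\Omega\setminus Sing$ every characteristic reaching a point with value equal to $u$ must be \emph{the} minimizing one (uniqueness of minimizers off $Sing$). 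One should also handle measurability/regularity of $s$ with care — the definition allows $s$ discontinuous — but since we only ever compare the \emph{values} $w(p)$ and use continuity of $w$ and $u$, not of $s$, this is not a real difficulty; the argument is really about the value function, not the section.
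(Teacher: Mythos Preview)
Your existence argument is fine. The uniqueness argument, however, has a genuine gap in the openness step, and the paper's proof handles exactly this point differently.

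You argue: if $w(p)=u(p)$ and $s(p)=(t_0,z_0)$, then $(t_0,z_0)\in A(Sing)$, and ``hence by openness of $A(Sing)$ and continuity of $F$, nearby points $p'$ also have their section value in $A(Sing)$.'' The first claim is essentially correct for $p\notin Sing$ (any preimage realizing the Lax--Oleinik infimum lies in $\overline{A(Sing)}$). But the second claim does not follow: openness of $A(Sing)$ is a statement about points of $V$ near $(t_0,z_0)$, not about the values $s(p')$ for $p'$ near $p$. Since $s$ is allowed to be discontinuous, nothing prevents $s(p')$ from jumping to a completely different sheet of $F^{-1}(p')$ lying far from $(t_0,z_0)$ and outside $A(Sing)$. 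Your proposed patch---that $F$ being a local diffeomorphism ``locally determines'' the section---is wrong for the same reason: a local diffeomorphism only says that \emph{one} local inverse exists, not that $s$ must equal it.

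The paper circumvents this by never trying to control $s$ directly. It works along each characteristic: for $z\in\partial\Omega$ it sets $t^\ast_z=\sup\{t:\ h(F(\tau,z))=u(F(\tau,z))\ \forall\,\tau<t\}$ and shows that if $t^\ast_z<\rho_{Sing}(z)$ then $h$ is discontinuous at $p=F(t^\ast_z,z)$. The mechanism is a compactness argument on the section values: at nearby points $p_n=F(t_n,z)$ with $h(p_n)\neq u(p_n)$, the values $s(p_n)=(s_n,y_n)$ must lie outside a neighborhood $O$ of $(t^\ast_z,z)$ on which $F$ is injective; passing to a subsequence, $(s_n,y_n)\to(s_\infty,y_\infty)\notin O$ with $F(s_\infty,y_\infty)=p$. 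Since $p\notin Sing$, every preimage of $p$ other than $(t^\ast_z,z)$ gives a \emph{strictly} larger value of $\tilde u$, so $h(p_n)\to\tilde u(s_\infty,y_\infty)>u(p)=h(p)$, contradicting continuity. This strict-inequality-plus-limit step is exactly the missing idea in your argument; once you have it, the open--closed framing becomes unnecessary, because $t^\ast_z\geq\rho_{Sing}(z)$ for all $z$ already gives $h=u$ on all of $\Omega$.
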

\begin{proof} 
Let $h$ be a function made from characteristics, and $u$ be the function given by formula (\ref{LaxOleinik}). Let $Sing$ be the closure of the singular set of $u$.


Take a point $z\in \partial \Omega$.
Define:
$$t^\ast_z=\sup
\left\{  
t\geq 0:\;
h(F(\tau,z))=u(F(\tau,z))\;
\forall 0\leq \tau<t
\right\}
$$

\textbf{Claim}: $t^\ast_z< \rho_{Sing}(z)$ implies $h$ is discontinuous at $F(t^\ast_z,z)$.

\textbf{Proof of the claim}: 
Assume that $t^\ast_z< \rho_{Sing}(z)$ and $h$ is continuous at $F(t^\ast_z,z)$ for some $z\in \partial\Omega $.


As $t^\ast_z< \rho_{Sing}(z) <\lambda_1(z) $, there is an open neighborhood $O$ of $(t^\ast_z,z) $ such that $F|_{O}$ is a diffeomorphism onto a neighborhood of $p=F(t^\ast_z,z)$.

By hypothesis, there is a sequence $t_n\rightarrow t^\ast_z$ and $p_n=F(t_n,z)$ such that $h(p_n)\neq u(p_n)$.
As $h$ is built from characteristics using a section $s$, we have $h(p_n)=\tilde{u}(s(p_n))=\tilde{u}((s_n,y_n))=s_n+g(y_n)$, for $(s_n,y_n)\neq (t_n,z)$.

For $n$ big enough, the point $(s_n,y_n) $ does not belong to $O$, as $ (t_n,z)$ is the only preimage of $p_n$ in $O$.
As $h(p_n)\rightarrow h(p) $, and $\partial\Omega$ is compact, we deduce the $s_n$ are bounded. 
We can take a subsequence of $(s_n,y_n) $ converging to $(s_\infty,y_\infty)\not\in O $. So we have $p=F(t^\ast_z,z)=F(s_\infty,y_\infty)$.
If $p\not\in Sing $, we deduce that $\lim_{n\rightarrow \infty} h(p_n)=\tilde{u}(s_\infty,y_\infty) > h(p)=u(p)=\tilde{u}(t^\ast_z,z)$, so $h$ is discontinuous at $p$.

Using the claim, we conclude the proof:
if $h$ is continuous, then $ \rho_{Sing}(z)\leq t^\ast_z $ for all $z\in \partial\Omega $, and $u=h$, as any point in $\Omega$ can be expressed as $F(t,z)$ for some $z$, and some $t\leq\rho_{Sing}(z) $.

\end{proof}

We will need later the following version of the same principle:
\begin{lem}\label{una unica solucion por caracteristicas continua en un conjunto grande}
Let $S$ be a split locus, and $h$ be the function associated to $S$ as in definition \ref{u associated to S}.
If $\rho_S$ is continuous, and $h$ can be extended to $\Omega$ so that it is continuous except for a set of null $\mathcal{H}^{n-1}$ measure, then $S=Sing$.
\end{lem}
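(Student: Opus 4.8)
The strategy is to reduce to Lemma \ref{una unica solucion continua por caracteristicas} by showing that if $S$ is a split locus with $\rho_S$ continuous and the associated function $h$ admits an extension to $\Omega$ that is continuous off a set $N$ with $\mathcal{H}^{n-1}(N)=0$, then in fact $h$ must agree with the viscosity solution $u$, which forces $S = Sing$. First I would recall that $Sing$ is itself a split locus (indeed a balanced one), and that a split locus is characterized by Lemma \ref{characterization of split locus} as a closed set with no proper closed subset that splits $\Omega$; so it suffices to prove that $S$ splits $\Omega$ in the same way $Sing$ does, i.e.\ that $h=u$ on $\Omega\setminus S$ and that $S$ cannot be strictly larger than $Sing$. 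The key point to establish is the analogue of the \textbf{Claim} in the proof of Lemma \ref{una unica solucion continua por caracteristicas}: for $z\in\partial\Omega$, define $t^\ast_z=\sup\{t\ge 0:\ h(F(\tau,z))=u(F(\tau,z))\ \forall\,0\le\tau<t\}$, and show that $t^\ast_z<\rho_{Sing}(z)$ forces $h$ to be discontinuous at $F(t^\ast_z,z)$ (which now needs the extended $h$), and moreover that the set of bad points generated this way is large enough to contradict the $\mathcal{H}^{n-1}$-null hypothesis.

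The argument for the Claim runs as in the earlier lemma: if $t^\ast_z<\rho_{Sing}(z)\le\lambda_1(z)$, then $F$ is a local diffeomorphism near $(t^\ast_z,z)$ onto a neighborhood of $p=F(t^\ast_z,z)$, and a sequence $p_n=F(t_n,z)\to p$ with $h(p_n)=s_n+g(y_n)\ne u(p_n)$ produces, after passing to a subsequence, a limit $(s_\infty,y_\infty)\notin O$ with $F(s_\infty,y_\infty)=p$; since $p\notin Sing$ this gives $\tilde u(s_\infty,y_\infty)>u(p)=\tilde u(t^\ast_z,z)$, so $\lim h(p_n)\ne h(p)$ and $h$ is discontinuous at $p$. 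The new ingredient is to upgrade this from a single point to a set of positive $\mathcal{H}^{n-1}$ measure: I would argue that if $S\not\subset Sing$, or more precisely if $h\ne u$ somewhere on $\Omega\setminus Sing$, then the set $\{z\in\partial\Omega: t^\ast_z<\rho_{Sing}(z)\}$ has positive $\mathcal{H}^{n-1}$ measure in $\partial\Omega$ — using that $\rho_S$ is continuous so $\{h\ne u\}$ is (relatively) open in a suitable sense along characteristics — and that the map $z\mapsto F(t^\ast_z,z)$ is Lipschitz-like or at least non-degenerate enough (via the special coordinates of \S\ref{subsection: special coordinates} and the fact that $F$ is a diffeomorphism where $t<\lambda_1$) that the image has positive $\mathcal{H}^{n-1}$ measure, contradicting that the discontinuity set $N$ of $h$ is $\mathcal{H}^{n-1}$-null. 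Hence $h=u$ on $\Omega\setminus Sing$, so $\rho_S\ge\rho_{Sing}$ pointwise, i.e.\ $S$ is "outside" $Sing$ along every characteristic; combined with the fact that $Sing$ splits $\Omega$ and $S$ splits $\Omega$, a containment argument (as in Lemma \ref{characterization of split locus}: a point of $Sing$ with $\sharp R_p\ge 2$ cannot lie strictly beyond $S$ along a characteristic without violating that $S$ splits) gives $S\supset Sing$; and minimality of the split locus $S$ together with $Sing$ splitting $\Omega$ gives $S\subset Sing$, whence $S=Sing$.

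The main obstacle I anticipate is precisely the measure-theoretic upgrade: turning "$h$ is discontinuous at one point $F(t^\ast_z,z)$ for each bad $z$" into "$h$ is discontinuous on a set of positive $\mathcal{H}^{n-1}$ measure." This requires (i) showing the set of bad $z$'s in $\partial\Omega$ has positive $(n-1)$-measure whenever $h\ne u$ somewhere, which uses the continuity of $\rho_S$ to ensure the discrepancy $\{h\ne u\}$ opens up over a full-dimensional set of characteristics rather than a thin one, and (ii) controlling the regularity of $z\mapsto(t^\ast_z,z)\mapsto F(t^\ast_z,z)$ so that this positive-measure set in $\partial\Omega$ maps to a positive-measure set in $\Omega$ — here the hypothesis $t^\ast_z<\rho_{Sing}(z)\le\lambda_1(z)$ is essential because it keeps us in the region where $F$ is a local diffeomorphism, so no Jacobian degeneration can collapse the image. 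Everything else is a faithful rerun of the proof of Lemma \ref{una unica solucion continua por caracteristicas} plus the standard characterization of split loci.
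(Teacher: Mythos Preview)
Your overall strategy matches the paper's: reuse the Claim from Lemma~\ref{una unica solucion continua por caracteristicas}, combine it with the null-measure hypothesis on the discontinuity set of $h$ and the continuity of $\rho_S$ to force $S\subset Sing$, and finish with Lemma~\ref{characterization of split locus}. Two points deserve comment.

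\emph{The measure step.} The ``main obstacle'' you flag --- controlling the regularity of $z\mapsto F(t^\ast_z,z)$ in order to push a positive-measure set of bad $z$'s forward to a positive-measure discontinuity set --- is real if you insist on pushing forward, though it can be rescued by the elementary observation that the graph of any function over a set $E\subset\partial\Omega$ has $\Hnuno$-measure at least $\Hnuno(E)$ (projection is $1$-Lipschitz), after which the diffeomorphism $F|_{A(Sing)}$ carries it to a positive-measure set in $\Omega$. The paper avoids the issue entirely by running the argument in the \emph{opposite} direction: it pulls the discontinuity set back. Since $F|_{A(Sing)}$ is a diffeomorphism onto $\Omega\setminus Sing$, the set
\[
Y=\pi_2\circ (F|_{A(Sing)})^{-1}\bigl(\{p\in\Omega\setminus Sing:\ h\text{ discontinuous at }p\}\bigr)
\]
inherits null $\Hnuno$-measure, hence $\partial\Omega\setminus Y_0$ is dense. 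Then continuity of $\rho_S$ alone yields the contradiction: any $p\in S\setminus Sing$ would give an open set of $z$'s with $\rho_S(z)<\rho_{Sing}(z)$, all of which lie in $Y_0$. No regularity of $t^\ast_z$ is ever needed.

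\emph{The containment at the end.} You have the inclusions reversed. From $h=u$ on $\Omega\setminus Sing$ (equivalently, $\rho_S\ge\rho_{Sing}$) one gets $S\subset Sing$ directly: if $p\in S\setminus Sing$ then $p=F(t^\ast,z^\ast)$ with $t^\ast<\rho_{Sing}(z^\ast)$, whence $\rho_S(z^\ast)\le t^\ast<\rho_{Sing}(z^\ast)$, contradiction. Equality then follows from the minimality of $Sing$ (not of $S$): $Sing$ is a split locus, $S$ is a closed subset that splits $\Omega$, so Lemma~\ref{characterization of split locus} applied to $Sing$ forces $S=Sing$.
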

\begin{proof} 
Define
$$Y_0=\left\{
z\in\partial\Omega:\; h(F(t,z))\neq u(F(t,z))\;\text{ for some } t\in[0,\rho_{Sing}(z))
\right\} $$
By the claim in the previous lemma, $Y_0$ is contained in:
$$Y=\left\{
z\in\partial\Omega:\; h  \text{ discontinuous at } F(t,z)\; \text{ for some }  t\in[0,\rho_{Sing}(z))
\right\} $$

Let $A=A(Sing)$ be the set in definition \ref{splits}. 
The map $F$ restricts to a diffeomorphism from $A$ onto $\Omega\setminus Sing $.
The set $Y$ can be expressed as:
$$
Y=\pi_2\circ (F\vert_A)^{-1}\left(\{p\in\Omega\setminus Sing:\; h \text{ discontinuous at } p \})\right)
$$
and thus by the hypothesis has null $\Hnuno$ measure.
Therefore, $\partial\Omega\setminus Y_0 $ is dense in $\partial\Omega$.

We claim now that $S\subset Sing $.
To see this, let $p\in S\setminus Sing $. Then $p=F(t^\ast,z^\ast)$ for a unique $(t^\ast,z^\ast) \in A $.
It follows $\rho_S(z^\ast)\leq t^\ast<\rho_{Sing}(z^\ast)$.
As $\rho_S$ is continuous, $\rho_S(z)<\rho_{Sing}(z) $ holds for all $z$ in a neighborhood of $z^\ast $ in $\partial\Omega $ and, in particular, for some $z\in \partial\Omega\setminus Y_0$.
This is a contradiction because, for $\rho_{S}(z)<t<\rho_{Sing}(z) $, $h(F(t,z))=\tilde{u}(t',z') $ for $(t',z')\neq (t,z) $, and $t<\rho_{Sing}(z) $ implies 
$h(F(t,z))=\tilde{u}(t',z')> \tilde{u}(t,z)=u(F(t,z))$, forcing $z\in  Y_0$.

We deduce $S=Sing$ using lemma \ref{characterization of split locus} and the fact that $Sing$ is a split locus. 
\end{proof}

\section{$\rho_S$ is Lipschitz}\label{section:rho is Lipschitz} 

In this section we study the functions $\rho_S$ and $\lambda_j$ defined earlier. The fact that $\rho_S$ is Lipschitz will be of great importance later.
The definitions and the general approach in this section follow \cite{Itoh Tanaka 2}, but our proofs are shorter, provide no precise quantitative bounds,
use no constructions from Riemannian or Finsler geometry, 
and work for Finsler manifolds, thus providing a new and shorter proof for the main result in \cite{Li Nirenberg}. The proof that $\lambda_j$ are Lipschitz functions was new for Finsler manifolds when we published the first version of the preprint of this paper. Since then, another preprint has appeared which shows that $\lambda_1$ is actually semi-concave.

\begin{proof}[Proof of \ref{landa es Lipschitz}]
 It is immediate to see that the functions $\lambda_j $ are continuous, since this is 
property (R3) of Warner (see  \cite[ pp. 577-578 and Theorem 4.5 ]{Warner}).

Near a conjugate point $x^0$ of order $k$, we can take special coordinates as in \ref{subsection: special coordinates}:
$$
F(x_1,\dots,x_n)=(x_1,\dots,x_{n-k},F_{n-k+1},\dots,F_n)
$$
Conjugate points near $x$ are the solutions of
$$
d(x_1,\dots,x_n)=det(dF)=\sum_{\sigma}(-1)^\sigma \frac{\partial F_{\sigma(n-k+1)}}{\partial x_{n-k+1}}\dots \frac{\partial F_{\sigma(n)}}{\partial x_n}=0
$$
>From the properties of the special coordinates, we deduce that:
\begin{equation}\label{d alpha d = 0 for some alphas} 
D^\alpha d(0)=0\qquad
\forall \vert \alpha\vert<k
\end{equation} 
and 
$$
\frac{\partial^k}{\partial x_1^k}d=1
$$

We can use the preparation theorem of Malgrange (see \cite{Golubitsky Guillemin}) to find real valued functions $q$ and $l_i$ in an open neighborhood $U$ of $x$ such that $q(x)\neq 0$ and:
$$
q(x_1,\dots,x_n)d(x_1,\dots,x_n)=x_1^k+x_1^{k-1}l_1(x_2,\dots,x_n)+\dots+l_k(x_2,\dots,x_n)
$$
and we deduce from (\ref{d alpha d = 0 for some alphas}) that
\begin{equation}\label{the lower derivatives of l_i vanish}
 D^\alpha l_i(0)=0\qquad
\forall \vert \alpha\vert<i
\end{equation} 
which implies
\begin{equation}\label{bounds for l_i}
|l_i(x_2,\dots, x_n)| < \bar{C} \max\{\vert x_2\vert, \dots, \vert x_n\vert\}^{i}
\end{equation} 

At any conjugate point $(x_1,\dots,x_n)$, we have $q(x)=0$, so:

\begin{equation*}
 -x_1^k=x_1^{k-1}l_1(x_2,\dots,x_n)+\dots+l_k(x_2,\dots,x_n)\\ 
\end{equation*}
\noindent and therefore
\begin{equation*}
|x_1|^k<|x_1|^{k-1}|l_1|+\dots+|l_k|
\end{equation*}

Combining this and (\ref{bounds for l_i}), we get an inequality for $\vert x_1\vert $ at any conjugate point $(x_1,\dots,x_n)$, where the constant $C$ ultimately depends on bounds for the first few derivatives of $F$:
\begin{equation}
 \vert x_1\vert^k<C\max\{\vert x_1\vert, \dots, \vert x_n\vert\}^{k-1}\max\{\vert x_2\vert, \dots, \vert x_n\vert\}
\end{equation}
We notice that 
$\vert x_1\vert>\max\{\vert x_2\vert, \dots, \vert x_n\vert\}$ 
implies
$\vert x_1\vert^k<C\vert x_1\vert^{k-1}\max\{\vert x_2\vert, \dots, \vert x_n\vert\}$.
In other words:
$$
\vert x_1\vert<\max\{C,1\}\max\{\vert x_2\vert, \dots, \vert x_n\vert\}
$$
This is the statement that all conjugate points near $x$ lie in a cone of fixed width containing the hyperplane $x_1=0$. Thus all functions $\lambda_j$ to $\lambda_{j+k}$ are Lipschitz at $(x_2\dots,x_n)$ with a constant independent of $x$.
\end{proof}

\paragraph{Remark.} A proof of lemma \ref{landa es Lipschitz} in the lines of section \ref{subsection: lagrangian} seems possible: let $\Lambda(\Omega)$ be the bundle of Lagrangian submanifolds of the symplectic linear spaces $T^{\ast}_p \Omega $ and let $\Sigma(\Omega)$ be the union of the Maslov cycles within each $\Lambda_p(\Omega)$.
Define $\lambda:V\rightarrow \Lambda(\Omega)$ where $\lambda(x)$ is the tangent to $\Theta $ at $D(\Phi(x)) $ (recall \ref{definition of Theta}).
The graphs of the functions $\lambda_k$ are the preimage of the Maslov cycle $\Sigma(\Omega)$. The geodesic vector field (transported to $T^{\ast} \Omega $), is transversal to the Maslov cycle. With some effort, the angle (in an arbitrary metric) between this vector field and the Maslov cycle at points of intersection can be bounded from below. This is sufficient to show that the $\lambda_k$ are Lipschitz.

\begin{lem}\label{rho es menor que landa1} 
 For any split locus $S$ and point $y\in \partial\Omega $, there are no conjugate points in the curve $t\rightarrow \exp(ty) $ for $t<\rho_{S}(y) $.
In other words, $\rho_S\leq \lambda_1 $.
\end{lem}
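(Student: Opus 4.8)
The claim is that $\rho_S \le \lambda_1$, i.e. along a characteristic $t \mapsto F(t,z)$, the first conjugate time $\lambda_1(z)$ cannot occur before the characteristic first meets $S$. My plan is to argue by contradiction: suppose that for some $z \in \partial\Omega$ there is a conjugate point at parameter $t_0 < \rho_S(z)$, and take the \emph{first} one, so $t_0 = \lambda_1(z)$. The key point is that for $t < \rho_S(z)$, the definition of $\rho_S$ together with the splitting property of $S$ (Definition \ref{splits}) forces $F$ to be a bijection between a full-measure piece of $A(S)$ and $\Omega \setminus S$; in particular, every point $F(t,z)$ with $t$ slightly less than $\rho_S(z)$ has a \emph{unique} preimage $(t,z)$ among characteristics that have not yet hit $S$. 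I want to show this uniqueness is incompatible with the presence of a conjugate point before $\rho_S(z)$.

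The mechanism is the standard one: near a first conjugate point of order $k$, use the special coordinates of Section \ref{subsection: special coordinates} in which $F$ takes the form (\ref{F near order k in special coords}). At a first conjugate point the map $F$ fails to be locally injective — a first-order computation with the normal form shows that for parameters just beyond $t_0$ along the relevant directions, $F$ identifies two distinct nearby points of $V$ (the Warner-type degeneracy of the exponential map at a first conjugate point). Concretely, if $\gamma$ is the characteristic through $z$ and $x_0 = (t_0, z)$ is the first conjugate point, one produces, for $t$ slightly bigger than $t_0$ but still less than $\rho_S(z)$, a second point $(t', z') \in V$ with $z'$ near $z$, $t'$ near $t$, and $F(t',z') = F(t,z)$, such that moreover neither of the two characteristic segments up to these parameters meets $S$ (this uses that $S$ is closed and $F(t,z) \notin S$, so a whole neighborhood of the relevant characteristic arcs avoids $S$). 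But then both $(t,z)$ and $(t',z')$ lie in $A(S)$ and map to the same point of $\Omega \setminus S$, contradicting that $F|_{A(S)}$ is a bijection. Hence no conjugate point can occur before $\rho_S(z)$, which is exactly $\rho_S \le \lambda_1$.

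The main obstacle I anticipate is making precise the claim that a first conjugate point forces a genuine self-intersection of $F$ among characteristics that have \emph{not yet} hit $S$ — i.e. producing the second preimage inside $A(S)$, not merely inside $V$. For this I would invoke Warner's structure results for the exponential map (properties (R1)--(R3), already cited in the proof of Lemma \ref{landa es Lipschitz}): at a first conjugate vector of order $k$, along at least one direction in the kernel of $dF$ the map $F$ is, to leading order, two-to-one on a half-neighborhood. Combined with continuity of $\rho_S$ in the relevant region — which holds since $t_0 = \lambda_1(z) < \rho_S(z)$ and $\lambda_1, \rho_S$ are continuous (Lemmas \ref{landa es Lipschitz} and \ref{rho is lipschitz}; one could even avoid Lipschitz continuity and use only continuity here) — one gets that the second preimage $(t', z')$ can be chosen with $t' < \rho_S(z')$, so that it genuinely lies in $A(S)$. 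The rest is the normal-form bookkeeping, which is routine given the special coordinates already set up.
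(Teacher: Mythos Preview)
Your overall strategy---use Warner's non-injectivity of $F$ at a first conjugate point to produce two distinct preimages of the same point, contradicting the bijectivity in the definition of a split set---is exactly the paper's. The gap is in how you place the second preimage $(t',z')$ inside $A(S)$. You invoke Lemma~\ref{rho is lipschitz} (continuity of $\rho_S$) for this, but that lemma is stated only for \emph{balanced} split loci, whereas the present lemma is asserted for an arbitrary split locus; and more seriously, the proof of Lemma~\ref{rho is lipschitz} passes through Lemma~\ref{rho es Lipschitz en rho < lambda pero cerca de un conjugado}, whose proof uses $\rho_S\le\lambda_1$ (the quantities $d_i=\lambda_i(z)-\rho(z)$ there must be nonnegative). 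So the appeal is circular.

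The repair is short and is precisely what the paper does. You only need \emph{lower} semicontinuity of $\rho_S$, and that follows immediately from $S$ being closed: if the second preimage $(t',z')$ produced by Warner's theorem failed to lie in $A(S)$, then $\rho_S(z')\le t'$; doing this for a sequence of neighborhoods shrinking to $(\lambda_1(z),z)$ gives $z_n\to z$ with $\rho_S(z_n)\le t_n'\to\lambda_1(z)$, and since $F(\rho_S(z_n),z_n)\in S$ and $S$ is closed, a subsequential limit yields $F(\tau,z)\in S$ for some $\tau\le\lambda_1(z)<\rho_S(z)$, contradicting the definition of $\rho_S(z)$. Replace your citation of Lemma~\ref{rho is lipschitz} with this two-line argument and the proof goes through.
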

\begin{proof}
Assume there is $x$ with $\rho_S(x)-\varepsilon>\lambda_1(x) $.
By \cite[3.4]{Warner}, the map $F$ is not injective in any neighborhood of $(x,t)$. There are points $(x_n,t_n)$ of $S$ with $x_n\rightarrow x$ and $t_n<\rho_S(x)-\varepsilon$ (otherwise $S$ does not split $\Omega $). Taking limits, we see $F(x,t)$ is in $S$ for some $t<\rho_S(x)-\varepsilon$, which contradicts the definition of $\rho_S(x)$.
\end{proof}

>From now on and for the rest of the paper, $S$ will always be a balanced split locus:
\begin{lem}\label{rho es Lipschitz en rho lejos de lambda} 
Let $E\subset \partial\Omega $ be an open subset whose closure is compact and has a neighborhood where $\rho <\lambda_1 $.
Then $\rho_S $ is Lipschitz in $E$.
\end{lem}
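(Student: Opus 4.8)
The plan is to reduce the statement to a local, uniform assertion near a point of $\overline E$, establish there a \emph{cone property} for $S$ that encodes the balanced condition, transport this property through the diffeomorphism $F$ to a cone property for the graph of $\rho_S$, and finish with the ``useful lemma'' \ref{the set and the cone}. Fix $z_0\in\overline E$ and set $t_0=\rho_S(z_0)$. By hypothesis $t_0<\lambda_1(z_0)$, so $x_0=(t_0,z_0)$ is not a first conjugate vector and $F$ restricts to a diffeomorphism from a neighbourhood $O$ of $x_0$ in $V$ — taken to be a product in the $(t,z)$ coordinates — onto a neighbourhood $W$ of $p_0=F(x_0)$ in $\Omega$. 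Since $F(s,z_0)\notin S$ for $s<t_0$, the vector $R^0:=dF_{x_0}(r_{x_0})$ belongs to $R_{p_0}$ (Definition \ref{the set R_a}) and is non-conjugate, and $r_{x_0}=\partial/\partial t$ is transverse to the slices $\{t=\mathrm{const}\}$. Finally, $S$ is closed and, by Theorem \ref{structure up to codimension 3}, has empty interior, so $S\cap W=\partial(\Omega\setminus S)\cap W$.

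The key step is the following cone property: after shrinking $W$, there are an open cone $C^{+}\subset\RR^{n}$ containing $R^{0}$ (in a fixed chart on $W$) and $\varepsilon>0$ such that $(q+C^{+})\cap B_{\varepsilon}(q)\subset\Omega\setminus S$ for every $q\in S\cap W$. This is where the balanced hypothesis is consumed, and it is the main obstacle. If it failed for every aperture and every $\varepsilon$, a diagonal argument would produce $q_i,q_i'\in S$ with $q_i,q_i'\to p_0$, $q_i'-q_i=\tau_iX_i$, $\tau_i\downarrow 0$ and $X_i$ converging to the direction of $R^{0}$; since $q_i,q_i'$ are close and near $p_0$, the minimizing geodesic from $q_i'$ to $q_i$ has direction close to that of $-R^0$. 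Feeding the sequences $q_i\to p_0$, $q_i'\to p_0$ together with the vectors $v_{q_i}(p_0)$, $v_{q_i'}(p_0)$ and arbitrary limits of vectors in $R_{q_i}$, $R_{q_i'}$ into the balanced condition at $p_0$ (Definition \ref{balanced}), and using that along the characteristic from $z_0$ the dual $\widehat{R^{0}}$ already realizes $\max\{w(R^{0})\colon w\text{ dual to }R\in R_{p_0}\}$, one forces two distinct vectors of $R_{p_0}$ to share the same dual one--form, which contradicts that $S$ is a split locus (so $R_{p_0}$ has at least two directions). Organizing this limiting argument — in particular separating the regimes $\tau_i/|q_i-p_0|\to 0,\ \mathrm{const},\ \infty$ — is the delicate part; it is the Finsler analogue of the first--variation argument of \cite{Itoh Tanaka 2}.

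Granting the cone property, I transfer it through $F$. Since $F\colon O\to W$ is a diffeomorphism with bounded derivatives and $dF_{x_0}(r_{x_0})=R^{0}\in C^{+}$, the pull--back $\widetilde S:=(F|_{O})^{-1}(S\cap W)$ is closed in $O$ with empty interior, and there are a fixed open cone $\widetilde C\subset\RR^{n}$ containing $r_{x_0}=\partial/\partial t$ and $\widetilde\varepsilon>0$ such that $(\widetilde q+\widetilde C)\cap B_{\widetilde\varepsilon}(\widetilde q)\subset O\setminus\widetilde S$ for every $\widetilde q\in\widetilde S$, after a further shrinking of $O$ (the Euclidean-versus-Finsler and $+$-versus-$\exp$ distortions are absorbed in constants). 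Now Lemma \ref{the set and the cone}, applied with $U=O$, $A=O\setminus\widetilde S$, $V$ a slightly smaller neighbourhood of $x_0$, the cone $\widetilde C$, and $X=\partial/\partial t\in\widetilde C$, gives $\widetilde S\cap V=\{(\phi(z),z)\}$ for a Lipschitz function $\phi$ defined near $z_0$, with Lipschitz constant controlled by the aperture of $\widetilde C$.

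It remains to identify $\phi$ with $\rho_S$ and to globalize. Choose $\delta>0$ with $O\subset\{|t-t_0|<\delta\}$. The set $K=\{F(s,z_0)\colon 0\le s\le t_0-\delta\}$ is compact and disjoint from the closed set $S$, hence at positive distance from it, so by uniform continuity of $F$ the arc $\{F(s,z)\colon 0\le s\le t_0-\delta\}$ is disjoint from $S$ for all $z$ near $z_0$; therefore the characteristic from $z$ first meets $S$ inside $O$, where by the previous step it meets $S$ exactly once, at $t=\phi(z)$. Thus $\rho_S=\phi$ near $z_0$, and $\rho_S$ is Lipschitz there with a constant depending only on the aperture of $\widetilde C$ and on bounds for $dF$ and $dF^{-1}$. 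Covering the compact set $\overline E$ by finitely many such neighbourhoods yields a single Lipschitz constant for $\rho_S$ on $E$.
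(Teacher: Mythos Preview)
Your cone property --- that for every $q\in S\cap W$ one has $(q+C^{+})\cap B_{\varepsilon}(q)\subset \Omega\setminus S$ with $C^{+}$ a cone around $R^{0}$ --- is \emph{false} in general, and this is a genuine gap rather than a technical wrinkle. Take $\Omega$ an equilateral triangle with $H(p)=|p|$, $g=0$. The split locus $S$ is the union of the three angle bisectors from the incenter $I$ to the vertices. Let $z_{0}$ be the midpoint of the bottom side, so $p_{0}=I$ and $R^{0}=(0,1)$. The branch of $S$ from $I$ to the top vertex is exactly the vertical segment $\{(1/2,y):y\ge\sqrt{3}/6\}$, which points in the direction $R^{0}$. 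For any $q$ on this branch just above $I$, both $q+tR^{0}$ and $q-tR^{0}$ remain in $S$ for small $t>0$, so no cone around $\pm R^{0}$ can satisfy your inclusion, however much you shrink $W$. Your contradiction argument cannot succeed here (and indeed the step ``one forces two distinct vectors of $R_{p_{0}}$ to share the same dual one--form'' is not justified --- you never explain how the balanced condition at $p_{0}$, which involves the directions $v_{q_i}(p_{0})$, interacts with the direction $q_i'-q_i\sim R^{0}$).

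The underlying mistake is working with the \emph{full} preimage $\widetilde S=(F|_{O})^{-1}(S\cap W)$. In the triangle example this set is the ``tent'' $\{t=\min(a,1-a)/\sqrt{3}\}$ \emph{together with} the vertical ray $\{a=1/2,\ t\ge\sqrt{3}/6\}$; it is not a graph over $\partial\Omega$, so Lemma~\ref{the set and the cone} cannot give you $\rho_{S}$. The paper avoids this by working only with the boundary of $A(S)$, i.e.\ the graph of $\rho_{S}$ itself, and bounding the angle between its tangent and $r=\partial/\partial t$ directly: at $q=F(\rho(z),z)$ with $z\in E$, every $Y\in R_{q}$ other than $X=dF_{(\rho(z),z)}(r)$ satisfies $|X-Y|\ge m>0$ uniformly (because $(F,dF(r))$ is an embedding and there are no conjugate points over $E$), and the balanced condition forces any tangent $v$ to $S$ at such $q$ to satisfy $\widehat X(v)=\widehat Y(v)$ for some such $Y$, hence $\widehat X(v)<1-\varepsilon$. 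That bound transports to a uniform cone condition on the graph of $\rho_{S}$ in $V$, to which Lemma~\ref{the set and the cone} then applies.
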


\begin{proof}
The map $x\rightarrow (F(x),dF_x(r)) $ is an embedding of $V$ into $TM$. There is a constant $c$ such that for $x,y\in V$:
\begin{equation}\label{(F,dF(r)) is an embedding} 
\vert F(x)-F(y)\vert+\vert dF_x(r)-dF_y(r)\vert \geq
c\min\{\vert x-y\vert,1\}
\end{equation} 

Recall the exponential map is a local diffeomorphism before the first conjugate point.
Points $p=F((z,\rho(z))) $ for $z\in E$ have a set $R_p$ consisting of the vector $dF_{(z,\rho(z))}(r) $, and vectors coming from $V\setminus E$. 
Choose one such point $p$, and a neighborhood $U$ of $p$.
The above inequality shows that there is a constant $m$ such that:
$$
\vert dF_x(r)-dF_y(r)\vert \geq m
$$
for $x=(z,\rho(z))$ with $z\in E$ and $y =(w,\rho(w))\in Q_p$ with $w\in V\setminus E$. 
By the balanced condition \ref{balanced}, any unit vector $v$ tangent to $S$ satisfies $\widehat{dF_x(r)}(v)= \widehat{dF_y(r)}(v)$ for some such $y$ and so:
$$
\widehat{dF_x(r)}(v)<1-\varepsilon
$$


Thus for any vector $w$ tangent to $E$ both vectors $(w,d\rho_-(w))$ and $(w,d\rho_+(w))$ lie in a cone of fixed amplitude around the kernel of $ \widehat{dF_x(r)} $ (the hyperplane tangent to the indicatrix at $x$).
%
Application of lemma \ref{the set and the cone} shows that $\rho$ is Lipschitz.

\end{proof}

\begin{lem}\label{rho es Lipschitz en rho < lambda pero cerca de un conjugado} 
Let $z_0\in \partial\Omega $ be a point such that $\rho(z_0)=\lambda_1(z_0) $.
Then there is a neighborhood $E$ of $z_0$ and a constant $C$ such that for all $z$ in $E$ with $\rho(z)<\lambda_1(z)$, $\rho$ is Lipschitz near $z$ with Lipschitz constant $C$.

%
\end{lem}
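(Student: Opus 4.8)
The plan is to work in the special coordinates for $F$ near the conjugate point $x^0 = (\lambda_1(z_0), z_0)$ described in Section \ref{subsection: special coordinates}, where $F$ has the normal form (\ref{F near order k in special coords}), and to argue that near $z_0$ the graph of $\rho$ is squeezed between two Lipschitz cones whose aperture does not degenerate. First I would fix a neighborhood $E$ of $z_0$ small enough that these special coordinates are valid. The key idea is to separate two regimes for a point $z\in E$ with $\rho(z)<\lambda_1(z)$: either $\rho(z)$ is close to $\lambda_1(z)$ (so $F(\rho(z),z)$ is near a conjugate point), or $\rho(z)$ is bounded away from $\lambda_1(z)$ by a definite amount. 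In the second regime we can localize and apply Lemma \ref{rho es Lipschitz en rho lejos de lambda} (or rather its proof), since there $p = F(\rho(z),z)$ lies before the first conjugate point along its characteristic and the vector $dF_{(z,\rho(z))}(r)$ is genuinely transverse; the balanced condition forces the duals of all vectors in $R_p$ to agree on directions tangent to $S$, which together with the embedding inequality (\ref{(F,dF(r)) is an embedding}) gives a uniform cone via Lemma \ref{the set and the cone}. The only thing to check is that the Lipschitz constant obtained this way can be taken uniform as $z$ ranges over $E$; this follows because the relevant quantity — the lower bound $m$ on $|dF_x(r) - dF_y(r)|$ for $x=(z,\rho(z))$, $y\in Q_p$ with $y$ coming from a different branch — stays bounded below by compactness of $\overline E$ and continuity of $F$ and $\Phi$, provided we know $\rho(z)$ stays away from $\lambda_1(z)$.

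The delicate regime is when $\rho(z)$ is close to $\lambda_1(z)$, so that $p = F(\rho(z),z)$ is near the conjugate locus of order $k$. Here $R_p$ contains a vector $X = dF_x(r)$ which is "almost conjugate", and the strategy is still to produce a cone around the hyperplane $\widehat{X}=0$, but one must control this hyperplane uniformly. In the special coordinates, along the integral curve of $r$ the differential $dF$ has the block form displayed in Section \ref{subsection: lagrangian}, and the vector $r = \partial/\partial x_1$ is mapped by $dF$ to a vector whose dual hyperplane is, to leading order, spanned by the coordinate directions $x_2,\dots,x_n$ — this is stable as the conjugate point is approached. So the cone containing $(w, d\rho_\pm(w))$ for $w$ tangent to $E$ can be taken with fixed aperture. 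Again the balanced condition is the mechanism: any unit $v$ tangent to $S$ at $p$ satisfies $\widehat{X}(v) = \widehat{Y}(v)$ for some $Y\in R_p$ coming from a \emph{different} branch (not the degenerating one), and the structure theorem \ref{structure up to codimension 3} plus Lemma \ref{rho es menor que landa1} ensure such a second branch exists — $p$ cannot be a pure edge point arbitrarily close to $z_0$ and still satisfy $\rho<\lambda_1$, or else $p\notin S$. One then again invokes Lemma \ref{the set and the cone}, now applied to the open set $A(S)$ (or its image) inside the chart, with the cone built from the normal-form coordinates.

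I expect the main obstacle to be the uniformity of the cone aperture as $z\to z_0$, precisely because the "good" vector $X = dF_x(r)$ and the "other" vectors $Y\in R_p$ may themselves approach each other or approach conjugacy. The resolution should be: (1) the degenerating direction of $dF$ is pinned down by the special coordinates (it lies within a fixed cone around the hyperplane $x_1 = 0$, exactly as in the proof of Lemma \ref{landa es Lipschitz}), so the bad direction is controlled; and (2) the second branch $Y$ stays uniformly transverse, because if $\rho(z) < \lambda_1(z)$ then by Lemma \ref{rho es menor que landa1} the branch realizing $\rho$ at $z$ is before its first conjugate point, and a compactness argument on $Q_p$ (the set $Q_p$ varies upper-semicontinuously and $F$, $dF(r)$ are continuous) keeps $|X - Y|$ bounded below. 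Combining, the pair $(w, d\rho_\pm(w))$ always lies in a cone of aperture bounded below uniformly on $E$, and Lemma \ref{the set and the cone} delivers a single Lipschitz constant $C$, completing the proof.
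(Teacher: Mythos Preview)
There is a genuine gap in the ``delicate regime'' part of your argument. Your claim that ``a compactness argument on $Q_p$ \dots\ keeps $|X-Y|$ bounded below'' is precisely what fails, and it is the whole point of the lemma. Near the conjugate point $x^0=(\lambda_1(z_0),z_0)$, the set $Q_p$ for $p=F(\rho(z),z)$ can contain \emph{two} points $x,y$ both lying in the special-coordinate neighborhood $O$ of $x^0$; then $X=dF_x(r)$ and $Y=dF_y(r)$ are both close to $dF_{x^0}(r)$, and $|X-Y|\to 0$ as $x,y\to x^0$. No compactness argument can prevent this: the upper-semicontinuity of $Q_p$ only tells you $Q_p\to \{x^0\}$, i.e.\ the branches collapse together. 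The balanced condition then gives $\widehat X(v)=\widehat Y(v)$ for $v$ tangent to $S$, but since $\widehat X$ and $\widehat Y$ are close, this is a \emph{weak} constraint on $v$, and the naive cone estimate degenerates.

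The actual proof is a quadratic cancellation: strict convexity of the indicatrix gives $1-\widehat Y(X)\gtrsim\|X-Y\|^2\gtrsim\|x-y\|^2$, while a detailed analysis of $dF$ near the conjugate point (using the almost-orthogonal kernel decomposition from Section~\ref{subsection: lagrangian} and the distances $d_i=\lambda_i(z)-\rho(z)$ to the nearby conjugate points) shows that the pulled-back form $F_x^\ast(\widehat X-\widehat Y)$ also has norm $\lesssim\|x-y\|^2$. The key step is bounding $\|dF_x^{-1}(X-Y)\|$ from below by a constant independent of $x,y$, which requires the inequality $\sum|b_i|d_i\lesssim\|x-y\|^2$ (equation~(\ref{desigualdad cerca de un punto conjugado}) in the paper). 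Only after this cancellation of two small quantities does one obtain a uniform lower bound on the angle between $r$ and $\ker(\widehat X-\widehat Y)$, hence a uniform cone aperture. Your proposal bypasses this entirely by asserting a separation that does not exist.
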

\begin{proof}
 Let $O$ be a compact neighborhood of $(z_0,\lambda_1(z_0)) $ where special coordinates apply. 
Let $x=(z,\rho(z))\in O$ be such that $\rho(z)<\lambda_1(z) $. We can apply the previous lemma and find $\rho $ is Lipschitz near $z$. We just need to estimate the Lipschitz constant uniformly. Vectors in $R_{F(x)}$ that are of the form $dF_{y}(r)$ for $y\in V\setminus O$, are separated from $dF_x(r)$ as in the previous lemma and pose no trouble, but now there might be other vectors $dF_{y}(r)$ for $y\in O$.

Fix the metric $\langle\cdot \rangle$ in $O$ whose matrix in special coordinates is the identity.
Any tangent vector to $S$ satisfies $\widehat{dF_x(r)}(v)=\widehat{dF_y(r)}(v)$, for some $y\in O\cap Q_{F(x)} $.
A uniform Lipschitz constant for $\rho$ is found if we bound from below the angle in the metric $\langle\cdot \rangle$ between $r$ and 
$d_x F^{-1}(v)$ for any vector $v$ with this property. 
Fix a point $y\in O$ with $F(x)=F(y)$, let $X=dF_{x}(r)$, $Y=dF_{y}(r)$ and $\alpha=\widehat{X}- \widehat{Y} $. We need to bound from below the angle between $r$ and the hyperplane $\ker\alpha $.


Our goal is equivalent to proving that there is $\varepsilon_1>0 $ independent of $x$ such that:
$$
\dfrac{F_{x}^\ast \alpha (r)}{\Vert F_{x}^\ast \alpha\Vert }>\varepsilon_1
$$
which is equivalent to:
$$
\widehat{Y}(X)<1-\varepsilon_1 \Vert F_{x}^\ast \alpha\Vert
$$
in the norm $\Vert\cdot \Vert$ associated to $\langle\cdot \rangle$.

Notice first that $X$ and $Y$ belong to the indicatrix at $F(x)=F(y)$, which is strictly convex. By this and (\ref{(F,dF(r)) is an embedding}), we see that for some $\varepsilon_2>0$:
$$
\widehat{Y}(X)<1-\varepsilon_2 \Vert X-Y\Vert ^2<1-c\varepsilon_2 \Vert x-y\Vert ^2
$$
So it is sufficient to show that for some $C_1$ independent of $x$:
$$
\Vert F_x^\ast \alpha\Vert <
C_1\Vert x-y\Vert ^2
$$

Using a Taylor expansion of $\frac{\partial \varphi}{\partial x_j} $ in the second entry, we see the form $ F_x^\ast \alpha $ can be written in coordinates:
$$
\begin{array}{rcl}
F_x^\ast \alpha&=&
\left( 
\frac{\partial \varphi}{\partial x_j}(p,X)-\frac{\partial \varphi}{\partial x_j}(p,Y)
\right) 
\frac{\partial F_j}{\partial x_l}\\
&=&
\frac{\partial^2 \varphi}{\partial x_i x_j}(p,X)
\left( 
X_i - Y_i
\right) 
\frac{\partial F_j}{\partial x_l}
+O(\Vert X-Y\Vert )^2\\
&=&
\frac{\partial^2 \varphi}{\partial x_i x_j}(p,X)
\left( 
X_i - Y_i
\right) 
\frac{\partial F_j}{\partial x_l}
+O(\Vert x-y\Vert )^2
 \end{array}
$$

The argument goes as follows: we need the inequality
$\Vert F_x^\ast \alpha (v)\Vert <C_1\Vert v\Vert \Vert x-y\Vert^2$, so we want to bound the bilinear map $g_{(p,X)}$ evaluated at $X-Y$ and the vector $dF(v)$. The bound on the norm is achieved when $dF(v)$ is proportional to $X-Y$.
The map $d_x F$ is invertible, so for the vector $v=\frac{dF^{-1}(X-Y)}{\Vert dF^{-1}(X-Y) \Vert}$, we have:
$$\Vert F_x^\ast \alpha \Vert =\Vert F_x^\ast \alpha(v)\Vert $$
Thus we have:
$$
\Vert F_{(z,\rho(z))}^\ast \alpha\Vert <
C_2 \frac{\Vert X-Y\Vert^2}{\Vert dF^{-1}(X-Y) \Vert}+O(\Vert x-y\Vert)^2
<C_3 \frac{\Vert x-y\Vert^2}{\Vert dF^{-1}(X-Y) \Vert}+O(\Vert x-y\Vert)^2
$$
for constants $C_2$ and $C_3$, and it is enough to show there is $\varepsilon_3$ independent of $x$ and $y$ such that:
\begin{equation}
\Vert dF^{-1}(X-Y) \Vert>\varepsilon_3
\end{equation}
Let $G(x)= d_xF(r) $. We have:
$$
X-Y=G(x)-G(y)=dG_x(x-y)+O(\Vert x-y\Vert)
$$
so it is equivalent to show the following:
$$
\Vert dF^{-1}dG_x(x-y)\Vert >\varepsilon_4
$$
for $\varepsilon_4 $ independent of $x $ and $y$.

Assume that $(\rho(z_0),z_0)$ is conjugate of order $k$, so that $\rho(z_0)=\lambda_1(z_0)=\dots=\lambda_k(z_0)$. Thanks to Lemma \ref{landa es Lipschitz} and reducing to a smaller $O$, we can assume that $a_1=(\lambda_1(z),z)$ to $a_k=(\lambda_k(z),z)$ all lie within $O$ (some of them may coincide). Let $d_i = \lambda_i(z)-\rho(z)$ be the distance from $x$ to the $a_i$. At each of the $a_i$ there is a vector $w_i\in \ker d_{a_i}F$ such that all the $w_i$ span a $k$-dimensional subspace. Recall from section \ref{subsection: lagrangian} that we can choose $w_i$ forming an almost orthonormal subset for the above metric, in the sense that $\left\langle w_i,w_j\right\rangle=\delta_{i,j}+\varepsilon_{i,j} $.

The kernel of $d_y F$ is contained in $K=\langle \dfrac{\partial}{\partial x_{n-k+1}},\dots,\dfrac{\partial}{\partial x_{n}}\rangle $ for all $y\in O$, and thus $K=\langle w_1,\dots,w_k\rangle$.
Write $w_i=\sum_{j\geq n-k+1} w_i^j \dfrac{\partial }{\partial x^j}$.
Then we have $\dfrac{\partial }{\partial x^1}\dfrac{\partial }{\partial w_i}F(a)=z_i+R_i(a)$, for $z_i=\sum w_i^k \dfrac{\partial }{\partial y^k}$, $\|R_i(a)\|<\varepsilon$ and $a\in O$.
We deduce $\dfrac{\partial }{\partial w_i}F(x)=\dfrac{\partial }{\partial w_i}F(a_i)+d_i (z_i+v_i)=d_i (z_i+v_i)$ for $\|v_i\|<\varepsilon$.

By the form of the special coordinates, $x-y\in K$. Let $x-y=\sum b_iw_i$. 
Since $|w_i|$ is almost $1$, there is an index $i_0$ such that $|b_{i_0}|>\frac{1}{2n}\|x-y\|$. 
We have the identity:
$$
0=F(y)-F(x)=d_x F(y-x)+O(\|x-y\|^2)
= \sum b_i d_i (z_i+ v_i ) + O(\|x-y\|^2)
$$
Multiplying the above by $\pm z_j$,
we deduce $d_j| b_j |= -\sum |b_i| d_j(\varepsilon_{i,j}+v_i z_j) + O(\|x-y\|^2)$, which leads to 
\begin{equation}\label{desigualdad cerca de un punto conjugado} 
 \sum |b_i| d_i<C_4\|x-y\|^2
\end{equation} 

At the point $x$, the image by $d_{x}F $ of the unit ball $B_x V$ in $T_x V$ is contained in a neighborhood of $Im (d_{a_{i}}F)$ of radius $2d_{i}$. 
We use the identity
$$
\Vert dF^{-1}dG_x(\frac{ x-y}{\Vert x-y\Vert} )\Vert^{-1} =
\sup\{ t: tdG_x(\frac{ x-y}{\Vert x-y\Vert} ) \in d_x F(B_x V) \}
$$
We can assume the distance between the vectors $dG_{x}(\frac{x-y}{\Vert x-y\Vert})$ and $\sum \frac{ b_i}{\Vert x-y\Vert}z_i$ is smaller than $\frac{1}{4n} $. 
In particular, looking at the $i_0$ coordinate chosen above,
we see that the vector $dG_{x}(\frac{ x-y}{\Vert x-y\Vert})$ needs to be rescaled at least by the amount $8nd_{i} $ in order to fit within the image of the unit ball. 

$$
\Vert dF^{-1}dG_x(\frac{ x-y}{\Vert x-y\Vert} )\Vert 
>\frac{1}{8n d_{i_0}}
>\frac{ |b_{i_0}|}{8nC_4\Vert x-y\Vert^2 }
>\frac{\varepsilon_4}{\Vert x-y\Vert }
$$
for $\varepsilon_4=\frac{1}{16n^2C_4}>0$, which is the desired inequality.
\end{proof}

\begin{proof}[Proof of Lemma \ref{rho is lipschitz}]
We prove that $\rho$ is Lipschitz close to a point $z^0$. Let $E$ be a neighborhood of $z^0$ such that $\lambda_1 $ has Lipschitz constant $L$, and $\rho $ has Lipschitz constant $K $ 
for all $z\in E$  such that $\rho(z)<\lambda(z)$. 
Let $z^1,z^2\in E$ be such that $\rho(z^1)<\rho(z^2)$.

If $\rho (z^1)=\lambda_1(z^1)$ we can compute
$$
\vert \rho (z^2)- \rho (z^1)\vert=\rho (z^2)- \rho (z^1)<\lambda(z^2)-\lambda(z^1)<L\vert z^2-z^1\vert
$$
where $L$ is a Lipschitz constant $L$ for $\lambda $ in $U$.

Otherwise take a linear path with unit speed $\xi:[0,t]\rightarrow \partial \Omega$ from $z^1$ to $z^2$ and let $a$ be the supremum of all $s$ such that $\rho(\xi(s))<\lambda(\xi(s)) $. Then
$$
\vert \rho (z^2)- \rho (z^1)\vert<
\vert \rho (z^2)- \rho (\xi(a))\vert+\vert \rho (\xi(a))- \rho (z^1)\vert
$$

\noindent The second term can be bound:
$$
\vert \rho (\xi(a))- \rho (z^1)\vert<K a
$$

\noindent If $\rho (z^2)\geq \rho (\xi(a)) $, we can bound the first term as
$$
\vert \rho (z^2)- \rho (\xi(a))\vert=\rho (z^2)- \rho (\xi(a))<
\lambda(z_2)-\lambda(\xi(a))<L\vert t-a\vert
$$
while if  $\rho (z^2)< \rho (\xi(a)) $, we have
$$
\vert \rho (z^2)- \rho (z^1)\vert<\vert \rho (\xi(a))- \rho (z^1)\vert
$$
so in all cases, the following holds:
$$
\vert \rho (z^2)- \rho (z^1)\vert<\max\{L,K\}t<\max\{L,K\}\vert z^2 - z^1\vert
$$
 
\end{proof}

\section{Proof of the main theorems.}\label{section:proof of the main theorems}
Take the function $h$ associated to $S$ as in definition \ref{u associated to S}. At a cleave point $x$ there are two geodesics arriving from $\partial\Omega$; each one yields a value of $h$ by evaluation of $\tilde{u}$.
The \emph{balanced} condition implies that $\widehat{X}_1(v)=\widehat{X}_2(v)$ for the speed vectors $X_1$ and $X_2$ of the characteristics reaching $x$ and any vector $v$ tangent to $S$.
Furthermore, $\widehat{X}$ is exactly $dh$, so the difference of the values of $h$ from either side is constant in every connected component of the cleave locus.

We define an $(n-1)$-current $T$ in this way: Fix an orientation $\mathcal{O} $ in $\Omega$. For every smooth $(n-1)$ differential form $\phi$, restrict it to the set of cleave points $\mathcal{C}$ (including degenerate cleave points). In every component $\mathcal{C}_{j}$ of $\mathcal{C}$ compute the following integrals

\begin{equation}\label{}
\int_{\mathcal{C}_{j,i}}h_{i}\phi \qquad i=1,2
\end{equation} 
where $\mathcal{C}_{j,i}$ is the component $\mathcal{C}_{j}$ with the orientation induced by $\mathcal{O} $ and the incoming vector $V_{i}$, and $h_{i}$ for $i=1,2$ are 
the limit values of $h$ from each side of $\mathcal{C}_{j}$.

%

We define the current $T(\phi)$ to be the sum:

\begin{equation}\label{definition of T}
T(\phi)=\sum_{j}\int_{\mathcal{C}_{j,1}}h_{1}\phi
+\int_{\mathcal{C}_{j,2}}h_{2}\phi
=\sum_{j}\int_{\mathcal{C}_{j,1}}(h_{1}-h_{2})\phi
\end{equation} 

The function $h$ is bounded and the $\Hnuno$ measure of $\mathcal{C} $ is finite (thanks to lemma \ref{rho is lipschitz}) so that $T$ is a real flat current that represents integrals of test functions against the difference between the values of $h$ from both sides.

If $T=0$, we can apply lemma \ref{una unica solucion por caracteristicas continua en un conjunto grande} and find $u=h$.

We will prove later that the boundary of $T$ as a current is zero.
Assume for the moment that $\partial T=0 $. It defines an element of the homology space $H_{n-1}(\Omega)$ of dimension $n-1$ with real coefficients. We can study this space using the long exact sequence of homology with real coefficients for the pair $ (\Omega,\partial\Omega)$:
\begin{eqnarray}\label{long exact sequence} 
0\rightarrow H_{n}(\Omega)\rightarrow H_{n}(\Omega,\partial\Omega)\rightarrow\notag\\
H_{n-1}(\partial\Omega)\rightarrow H_{n-1}(\Omega)\rightarrow H_{n-1}(\Omega,\partial\Omega)\rightarrow \dots
\end{eqnarray}

\subsection{Proof of Theorem \ref{maintheorem0}.}
We prove that under the hypothesis of \ref{maintheorem0}, the space $H_{n-1}(\Omega)$ is zero, and then we deduce that $T=0$.

As $\Omega$ is open, $H_{n}(\Omega)\approx 0$. As $\Omega$ is simply connected, it is orientable, so we can apply Lefschetz duality with real coefficients (\cite[3.43]{Hatcher}) which implies:
$$H_{n}(\Omega,\partial\Omega)\approx H^{0}(\Omega)$$
and  
$$H_{n-1}(\Omega,\partial\Omega)\approx H^{1}(\Omega)=0$$

As $\partial\Omega$ is connected, we deduce $H_{n-1}(\Omega)$ has rank $0$, and $T=\partial P$ for some $n$-dimensional flat current $P$. The flat top-dimensional current $P$ can be represented by a density $f\in L^{n}(\Omega)$ (see \cite[p 376, 4.1.18]{Federer}):

\begin{equation}
P(\omega)=\int _{\Omega}f\omega,,\qquad \omega\in \Lambda^{n}(\Omega)
\end{equation} 

We deduce from (\ref{definition of T}) that the restriction of $P$ to any open set disjoint with $S$ is closed, so $f$ is a constant in such open set. As $\Omega\setminus S$ is open and connected, $f$ is constant $a.e.$, and $T=0$.

\subsection{Proof of Theorem \ref{maintheorem1}.}
Assume now that $\partial\Omega$ has $k$ connected components $\Gamma_{i}$. We look at (\ref{long exact sequence}), and recall the map $H_{n-1}(\partial\Omega)\rightarrow H_{n-1}(\Omega)$ is induced by inclusion.
We know by Poincar\'e duality that $H_{n-1}(\partial\Omega)$ is isomorphic to the linear combinations of the fundamental classes of the connected components of $\partial\Omega$ with real coefficients.
We deduce that $H_{n-1}(\Omega)$ is generated by 
the fundamental classes of the connected components of $\partial\Omega$, and that it is isomorphic to the quotient of all linear combinations by the subspace of those linear combinations with equal coefficients.
Let
$$
R=\sum a_{i}\left[ \Gamma_{i}\right] 
$$
be the cycle to which $T$ is homologous (the orientation of $\Gamma_{i} $ is such that, together with the inwards pointing vector, yields the ambient orientation).

If we define $a(x)=a_{i}$, $\forall x\in \Gamma_{i}$, solve the HJ equations with boundary data $g-a$ and compute the corresponding current $\widehat{T} $, we see that $\widehat{T} = T - j_{\sharp}R$, where $j$ is the retraction $j$ of $\Omega$ onto $S$ that fixes points of $S$ and follows characteristics otherwise. Then the homology class of $\widehat{T}$ is zero, and we can prove $\widehat{T}=0 $ as before. It follows that $S$ is the singular set to the solution of the Hamilton-Jacobi equations with boundary data $g-a $.

\subsection{Proof of Theorem \ref{maintheorem2}.}
For this result we cannot simply use the sequence (\ref{long exact sequence}). We first give a procedure for obtaining balanced split loci in $\Omega$ other than the cut locus. 

A function $a:[\partial \widetilde{\Omega}]\rightarrow \RR $ that assigns a real number to each connected component of $\partial \widetilde{\Omega} $ is \emph{equivariant} iff for any automorphism of the cover $\varphi$ there is a real number $c(\varphi)$ such that $a\circ \varphi =a+c(\varphi)$.

A function $a:[\partial \widetilde{\Omega}]\rightarrow \RR $ is \emph{compatible} iff $\widetilde{g}-a$ satisfies the compatibility condition (\ref{compatibility condition}).

An equivariant function $a$ yields a group homomorphism from $\pi_1(\Omega, \partial\Omega)$ into $\RR$ in this way:
\begin{equation}\label{from a function a to an element of Hom(pi,R)} 
\sigma\rightarrow a(\widetilde{\sigma}(1))-a(\widetilde{\sigma}(0)) 
\end{equation} 
where $\sigma:[0,1]\rightarrow \Omega $ is a path with endpoints in $\partial\Omega $  and  $\widetilde{\sigma}$ is any lift to $\widetilde{\Omega} $ . The result is independent of the lift because $a$ is equivariant. 
On the other hand, choosing an arbitrary component $[\Gamma_0]$ of $\partial\Omega$ and a constant $a_0=a([\Gamma])$, the formula:
\begin{equation}\label{from an element of Hom(pi,R) to a function} 
[\Gamma]\rightarrow a([\Gamma_0])+l(\pi\circ\tilde{\sigma}),\text{ for any path }\tilde{\sigma}\text{ with }\tilde{\sigma}(0)\in \Gamma_0,\sigma(1)\in \Gamma
\end{equation} 
assigns an equivariant function $a$ to an element $l$ of $Hom(\pi_1(\Omega, \partial\Omega), \RR)\sim H^{1}(\Omega, \partial\Omega)  $.

Up to addition of a global constant, these two maps are inverse of one another, so there is a one-to-one correspondence between elements of 
$H^{1}(\Omega, \partial\Omega) $
and equivariant functions $a$ (with $a+c$ identified with $a$ for any constant $c$).
The compatible equivariant functions up to addition of a global constant can be identified with an open subset of $H^{1}(\Omega, \partial\Omega) $ that contains the zero cohomology class.

%

Let $\widetilde{\Omega} $ be the universal cover of $\Omega $.
We can lift the Hamiltonian $H$ to a function $\widetilde{H} $ defined on $T^\ast\widetilde{\Omega}$ and the function $g$ to a function $\tilde{g} $ defined on $\partial \widetilde{\Omega}$.
The preimage of a balanced split locus for $\Omega $, $H$ and $g$ is a balanced split locus for $\widetilde{\Omega} $, $\widetilde{H} $ and $\tilde{g} $ that is invariant by the automorphism group of the cover, and conversely, a balanced split locus $\widetilde{S}$ in $\widetilde{\Omega} $ that is invariant by the automorphism group of the cover descends to a balanced split locus on $\Omega $.



Any function $a$ that is both equivariant and compatible can be used to solve the Hamilton-Jacobi problem $\widetilde{H}(p,du(p))=1$ in $\widetilde{\Omega} $ and $u(p)=\widetilde{g}(p)-a(p)$. 
If $\pi_1(\Omega)$ is not finite, $\widetilde{\Omega}$ will not be compact, but this is not a problem (see remark 5.5 in page 125 of \cite{Lions}).
The singular set is a balanced split locus that is invariant under the action of $\pi_1(\Omega) $ and hence it yields a balanced split locus in $\Omega $. 
We write $S[a]$ for this set.
It is not hard to see that the map $a\rightarrow S[a]$ is injective.

Conversely, a balanced split locus in $\Omega $ lifts to a balanced split locus $\widetilde{S}$ in $\widetilde{\Omega} $. 
The reader may check that the current $T_{\widetilde{S}}$ is the lift of $T_S$, and in particular it is closed. 
As in the proof of Theorem \ref{maintheorem1}, we have $H^1(\widetilde{\Omega})=0$, and we deduce
$$
T_{\widetilde{S}} = \sum_j a_j[\Lambda_j] +\partial P
$$
where $\Lambda_j$ are the connected components of $\partial \widetilde{\Omega}$. 

This class is the lift of the class of $T\in H_{n-1}(\Omega)$ and thus it is invariant under the action of the group of automorphisms of the cover. 
Equivalently, the map defined in (\ref{from a function a to an element of Hom(pi,R)}) is a homomorphism.
Thus $a$ is equivariant.
Similar arguments as before show that $S=S[a]$.

Thus the map $a\rightarrow S[a] $ is also surjective, which completes the proof that there is a bijection between equivariant compatible functions $a:[\partial \widetilde{\Omega}]\rightarrow \RR $ and balanced split loci.

\section{Proof that $\partial T=0$}
It is enough to show that $\partial T=0$ at all points of $\Omega$ except for a set of zero $(n-2)$-dimensional Hausdorff measure. This is clear for points not in $S$.
Due to the structure result \ref{structure up to codimension 3}, we need to show the same at cleave points (including degenerate ones), edge points and crossing points.
Along the proof, we will learn more about the structure of $S$ near those kinds of points.

Throughout this section, we assume $n=dim(\Omega)>2$. This is only to simplify notation, but the case $n=2$ is covered too. We shall comment on the necessary changes to cover the case $n=2$, but do not bother with the simple case $n=1$.

\subsection{Conjugate points of order $1$.}

We now take a closer look at points of $A(S)$ that are also conjugate points of order $1$. Fortunately, because of \ref{structure up to codimension 3} we do not need to deal with higher order conjugate points.
In a neighborhood $O$ of a point $x^{0}$ of order $1$, in the special coordinates of section \ref{subsection: special coordinates}, we have $x^0=0$ and $F$ looks like:
\begin{eqnarray}\label{local form for order 1}
F(x_{1},x_{2},\dots,x_{n}) =
(x_{1},x_{2},\dots,F_n(x_{1},\dots,x_{n}))
\end{eqnarray}

Let $\widetilde{S}$ be the boundary of $A(S)$, but without the points $(0,z)$ for $z\in\partial\Omega $.
It follows from \ref{rho is lipschitz} that $\widetilde{S}$ is a Lipschitz graph on coordinates given by the vector field $r$ and $n-1$ transversal coordinates. 
It is not hard to see
that it is also a Lipschitz graph $x_{1}=\tilde{t}(x_{2},\dots,x_{n})$ in the above coordinates $x_{i}$, possibly after restricting to a smaller open set.

Because of  Lemma \ref{rho es menor que landa1}, we know $x^0$ is a first conjugate point, so we can assume that $O$ is a coordinate cube $\prod (-\varepsilon_i,\varepsilon_i)$, and that $F$ is a diffeomorphism when restricted to $\{x_1=s\} $ for $s<-\varepsilon_1/2$.

\begin{dfn}\label{univocal} 
 A set $O\subset V$ is \emph{univocal} iff for any $ p\in \Omega$ and $x^{1},x^{2}\in Q_{p}\cap O$ we have $\tilde{u}(x^{1})=\tilde{u}(x^{2})$.
\end{dfn}

\paragraph{Remark.} The most common case of univocal set is a set $O$ such that $F\vert O$ is injective.

\begin{lem}\label{uniqueness near order 1 points} 
 Let $x^{0}\in V$ be a conjugate point of order 1. Then $x^{0}$ has an univocal neighborhood.
\end{lem}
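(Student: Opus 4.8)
\textbf{Proof proposal for Lemma \ref{uniqueness near order 1 points}.}

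The plan is to work in the special coordinates of \eqref{local form for order 1}, where near $x^0=0$ the map $F$ has the form $F(x_1,\dots,x_n)=(x_1,\dots,x_{n-1},F_n(x))$, and to exploit the fact (from Lemma \ref{rho es menor que landa1}) that $x^0$ is a \emph{first} conjugate point, so $O$ can be taken to be a coordinate cube on which $F$ is a diffeomorphism on each slice $\{x_1=s\}$ with $s$ close to $-\varepsilon_1$. First I would record the structure of the fibers $F^{-1}(p)$: since $F$ only folds in the last coordinate, $F^{-1}(p)\cap O$ lies in a single vertical line $\{(x_1,p_1,\dots,p_{n-2},\bar x_1)\}$ (abusing notation, a line in the $x_1$-direction with the middle coordinates fixed by $p$), and along that line $x_1\mapsto F_n(x_1,\dots)$ has derivative vanishing only at the conjugate set, which by the cone estimate in the proof of Lemma \ref{landa es Lipschitz} is a Lipschitz graph $x_1=\tilde t$; moreover, because $O$ is chosen past no earlier conjugate point, $\partial F_n/\partial x_1>0$ for $x_1<\tilde t$ and $F_n$ is (at least) monotone then possibly turning, so each fiber meets $O$ in at most two points, one on either side of the conjugate graph.

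Next I would bring in the set $\widetilde S$, the boundary of $A(S)$ minus the boundary points $(0,z)$, which by the discussion just before the lemma is a Lipschitz graph $x_1=\tilde t(x_2,\dots,x_n)$ lying (by Lemma \ref{rho es menor que landa1}, since $\rho_S\le\lambda_1$) \emph{below} the conjugate graph. The key point is then: for $p\in\Omega$, the set $Q_p\cap O=(F|_{\overline{A(S)}})^{-1}(p)\cap O$ can contain at most the points of $F^{-1}(p)\cap O$ that lie in $\overline{A(S)}$, i.e. with $x_1\le\tilde t(x_2,\dots,x_n)$. On the relevant vertical line there is (since $\partial F_n/\partial x_1\ne 0$ strictly below the conjugate graph) at most one such point, because $x_1\mapsto F_n$ is strictly monotone on $\{x_1\le\tilde t\}$; hence $\sharp(Q_p\cap O)\le 1$, and the univocality condition $\tilde u(x^1)=\tilde u(x^2)$ for $x^1,x^2\in Q_p\cap O$ holds vacuously. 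In fact this shows $F|_{\overline{A(S)}\cap O}$ is injective, which by the Remark is the prototypical univocal set, so I would simply verify that stronger statement.

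The main obstacle I anticipate is making rigorous the claim that, below the conjugate graph, the vertical fiber of $F$ through a point of $O$ meets $\overline{A(S)}$ in at most one point — one must rule out that the fold brings a second sheet of $F^{-1}(p)$ back down into the region $\{x_1\le\tilde t\}\cap O$. This is where the choice of $O$ as a cube on which $F|_{\{x_1=s\}}$ is a diffeomorphism for $s$ near $-\varepsilon_1$, combined with $x^0$ being a first conjugate point (so $dF$ is nonsingular for $x_1<\tilde t$ throughout $O$, after shrinking), is essential: it forces $\partial F_n/\partial x_1$ to keep a constant sign on $\{x_1<\tilde t\}\cap O$, so that $F_n(\cdot,x_2,\dots,x_n)$ is strictly monotone there and cannot take the same value twice. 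A secondary technical point is handling the boundary slice $x_1=0$ excluded from $\widetilde S$ and the closure in $Q_p$, which only adds boundary points $(0,z)$ to a single line and does not create extra preimages; I would dispatch this by shrinking $O$ so that $F|_O$ restricted to $\{x_1\le 0\}$ is already injective on each slice and then arguing monotonicity in $x_1$ as above. Once injectivity of $F|_{\overline{A(S)}\cap O}$ is established, the lemma follows immediately.
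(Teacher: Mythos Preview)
There is a genuine gap, and it begins with a coordinate confusion. In the special coordinates \eqref{local form for order 1} the first $n-1$ components of $F$ are the identity, so for $p=F(x^1)=F(x^2)$ one has $x^1_j=x^2_j$ for $j=1,\dots,n-1$: the fiber $F^{-1}(p)\cap O$ lies on a line in the $x_n$--direction, \emph{not} the $x_1$--direction. The radial vector $r=\partial/\partial x_1$ is transversal to this line, and the conjugate locus is the zero set of $\partial F_n/\partial x_n$ (not of $\partial F_n/\partial x_1$). Thus your monotonicity statement ``$\partial F_n/\partial x_1>0$ for $x_1<\tilde t$'' is not the relevant one.

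Even after correcting this, the injectivity you aim for is in general false. Fix $a_1,\dots,a_{n-1}$; the part of the fiber lying in $\overline{A(S)}$ is the set $\{x_n:\,a_1\le \tilde t(a_2,\dots,a_{n-1},x_n)\}$. Since $\tilde t\le \lambda_1$, each such point lies strictly before its own first conjugate point, and there $\partial F_n/\partial x_n$ has a fixed sign; but this set of admissible $x_n$ need not be an interval. A simple model in the $(x_1,x_n)$--plane is $F_n=x_1x_n-x_n^3$, with conjugate curve $x_1=3x_n^2$: on the line $x_1=\delta>0$ the region $\{\delta<3x_n^2\}$ is two rays on which $F_n$ is strictly decreasing, yet $F_n(\delta,\sqrt\delta)=F_n(\delta,-\sqrt\delta)=0$ with both points before their conjugate points ($\lambda_1(\pm\sqrt{\delta})=3\delta$). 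Nothing in the hypotheses prevents $\tilde t$ from being large enough at $\pm\sqrt\delta$ for both preimages to lie in $\overline{A(S)}$. So $F\vert_{\overline{A(S)}\cap O}$ can fail to be injective, and your ``vacuous univocality'' route does not go through. A telling sign is that your argument never invokes the balanced condition.

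The paper's proof takes a completely different line: it does \emph{not} try to show $x^1=x^2$. Instead it joins $x^1$ to $x^2$ by a Lipschitz curve $\sigma$ inside $\widetilde S\cap H_a$ (the $2$--slice with the intermediate coordinates frozen), shows via the balanced condition that the region $D\subset H_a$ bounded by $\sigma$ and the two radial segments maps onto the planar region $E\subset L_a$ with no intrusion of characteristics from outside $O_1$, and then computes $\tilde u(x^1)-\tilde u(x^2)=\int_\sigma d\tilde u$. The integrand is shown to vanish on the conjugate part of $\sigma$ by a Gauss--lemma/first--variation argument, and on the cleave part the contributions pair off and cancel, again by the balanced condition. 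The conclusion is univocality, not injectivity.
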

\begin{proof}
Let $O_1$ and $U_1$ be neighborhoods of $x^0$ and $F(x^0)$ where the special coordinates (\ref{local form for order 1}) hold;
let $x_{i}$ be the coordinates in $O_1$ and $y_{i}$ be those in $U_1$.

Choose smaller $U\subset U_1$ and $O\subset F^{-1}(U)\cap O_1$ so that
we can assume that if a point $x'\in V\setminus O_1$ maps to a point in $U$, then for the vector $Z=dF_{x'}(r)$ we have
\begin{equation}\label{separation between O and V minus O_1} 
\hat{Z}(\frac{\partial }{\partial y_{1}}) <
\hat{X}(\frac{\partial }{\partial y_{1}})
\end{equation} 
for any $X=dF_{x}(r)$ with $x\in O$ and also
\begin{equation}\label{vectors from O_1 are mostly vertical} 
\hat{Y}(\frac{\partial }{\partial y_{1}}) > 1-k
\end{equation} 
for some $k>0$ sufficiently small and all $Y=dF_{y'}(r)$ for $y'\in O_1$.

Take $x^{1},x^{2}\in Q_{q}\cap O$ for $q\in U$.
The hypothesis $x^{1},x^{2}\in Q_{q}$ implies $q=F(x^{1})=F(x^{2})$, and so $x^{1}_{j}=x^{2}_{j} $ follows for all $j<n$. Let us write $a_{j}=x^{1}_{j}=x^{2}_{j} $ for  $j<n$, $s^{1}=x^{1}_{n}$ and $s^{2}=x^{2}_{n}$.
Fix $a_{2},\dots,a_{n-1}$ and consider the set
$$
H_{a}=
\left\lbrace 
x\in O: x_{i}=a_{i};i=2,\dots, n-1
\right\rbrace
$$
Its image by $F$ is a subset of a plane in the $y_i$ coordinates: 
$$
L_{a}=\left\lbrace y\in U: y_{i}=a_{i}, i=2,\dots, n-1\right\rbrace
$$
Points of $O_1$ not in $H_{a}$ map to other planes.
If $n=2$, we keep the same notation, but the meaning is that $H_a=O$ and $L_a=V$.

There is $\varepsilon>0$ such that for $t<-\varepsilon/2$, the line $\{x_1=t\}\cap H_a $ maps diffeomorphically to $\{y_1=t\}\cap L_a$.

Due to the comments at the beginning of this section, $\widetilde{S}$ is given as a Lipschitz graph $x_{1}=\tilde{t}(x_{2},\dots,x_{n})$.
The identity $a_{1}=\tilde{t}(a_{2},\dots,a_{n-1},s^{i})$ holds for $i=1,2$ because $x^{1},x^{2}\in Q_{q}$. We define a curve $\sigma:[s^{1},s^{2}]\rightarrow \widetilde{S}$ by $\sigma(s)=(\tilde{t}(a_{2},\dots,a_{n-1},s),a_{2},\dots,a_{n-1},s) $. The image of $\sigma$ by $F$ stays in $S$, describing a closed loop based at $q$; we will establish the lemma by examining the variation of $\tilde{u}$ along $\sigma$.

For $i=1,2$, let $\eta^i:(-\varepsilon_i,a_1]\rightarrow H_a$ given by $\eta^i(t)=(t,a_2,\dots,a_{n-1},s^i)$ be the segments parallel to the $x_{1}$ direction that end at $x^{i}$, defined from the first point in the segment that is in $O$.
We can assume that the intersection of $O$ with any line parallel to $\frac{\partial}{\partial x_1}$ is connected, and that the intersection of $U$ with any line parallel to $\frac{\partial}{\partial y_1}$ is connected too.
We can also assume $\varepsilon_i<\varepsilon$.

Let $D$ be the closed subset of $H_{a}$ delimited by the Lipschitz curves $\eta^1$, $\eta^2$ and $\sigma$, and let $E$ be the closed subset of $L_{a}$ delimited by the image of $\eta^1$ and $\eta^2$. 

We claim $D$ is mapped onto $E$.
First, no point in $int(D) $ can map to the image of the two lines, cause this contradicts either $\rho\leq \lambda_{1}$, or the fact that $\rho(a_{2},\dots,a_{n-1},s^{i})$ is the first time that the line parallel to the $x_{1}$ direction hits $\widetilde{S}$, for either $i=1$ or $i=2$.
We deduce $D$ is mapped into $E$.

Now assume $G=E\setminus F(D)$ is nonempty, and contains a point $p=(p_1,\dots,p_n)$. 
If $Q_p$ contains a point $x\in O_1\setminus F(D)$, following the curve $t\rightarrow (t,x_2,\dots,x_n)$ backwards from $x=(x_1,x_2,\dots,x_n)$, we must hit either a point in the image of $\eta^i\vert_{(-\varepsilon_1,a_1)}$ (which is a contradiction with the fact that both $(t,\dots,x_n) $ for $t<x_1$ and $(t,a_2,\dots,a_{n-1},s^i) $ for $t<a_1$ are in $A(S)$; see definition \ref{splits}), or the point $q$ (which contradicts  (\ref{vectors from O_1 are mostly vertical})). Thus for any point $p\in G$, we have $Q_p\subset V\setminus O_1$.  

Now take a point $p\in \partial G$, and
pick up a sequence approaching it from within $G$ and contained in a line with speed vector $\frac{\partial }{\partial y_{1}} $. By the above, the set $Q$ for points in this sequence is contained in $V\setminus O_1$. We can take a subsequence carrying a convergent sequence of vectors, 
 and thus $R_{p}$ has a vector of the form $dF_{x^{\ast}}(r) $ for $x^{\ast}\in D\subset O$.
This violates the balanced condition, because of (\ref{separation between O and V minus O_1}).
This implies $\partial G=\emptyset$, thus $G=\emptyset$ because $E$ is connected and $F(D)\neq\emptyset $.



Finally, we claim there are no vectors coming from $V\setminus O_1$ in $R_p$ for $p\in int(E)$. 
The argument is as above, but we now approach a point with a vector from $V\setminus O_1$ in $R_p$ within $E=F(D)$ and with speed $-\frac{\partial }{\partial y_{1}}$. The approaching sequence may be chosen so that it carries a convergent sequence of vectors from $F(D)$, and again (\ref{separation between O and V minus O_1}) gives a contradiction with the balanced condition.



%

We now compute:
\begin{equation}\label{u dere- u zurda es una integral} 
 \tilde{u}(x^{1})-\tilde{u}(x^{2})
=\int_{s_{1}}^{s_{2}} \frac{d(\tilde{u}\circ \sigma)}{ds}
=\int_{\sigma} d\tilde{u}
\end{equation} 


The curve $F\circ\sigma$ runs through points of $S$. If $F(\sigma(s))$ is a cleave point, then $F\circ\sigma$ is a smooth curve near $s$. We show that cleave points are the only contributors to the above integral. 
If a point is not cleave, either it is the image of a conjugate vector, or has more than $2$ incoming geodesics. As $F\circ\sigma$ maps into $int(E)$, all vectors in $R_{F(\sigma(s))}$ come from $O$.

Let $N$ be the set of $s$ such that $\sigma(s)$ is conjugate. We notice that $\sigma(s)$ is not an A2 point for $s\in N $. This is proposition 6.2 in \cite{Nosotros}, and is a standard result for cut loci in Riemannian manifolds. This means that at those points the kernel of $dF$ is contained in the tangent to $\widetilde{S}$. The intersection of $\widetilde{S}$ with the plane $H_a$ is the image of the curve $\sigma $. Thus, for $s\in N $ the tangent to the curve $\lambda_1$ is the kernel of $d_{\sigma(s)}F$.
If $\sigma$ is differentiable at a point $s$ we deduce, thanks to \ref{rho es menor que landa1}, that the tangent to the curve $\lambda_1$ is the kernel of $d_{\sigma(s)}F$.

We now use a \emph{variation of length} argument to get a variant of the Finsler Gauss lemma. Let $c=(l,w) $ be a tangent vector to $V\subset \RR\times\partial\Omega $ at the point $x=(t,z)$, and assume $d_xF(c)=0$.
We show that this implies $d\tilde{u}(c)=0$.
Let $\gamma_s$ be a variation through geodesics with initial point in $z(s)\in \partial\Omega $ and the characteristic vector field at $z(s)$ as the initial speed vector, such that $\frac{\partial}{\partial s}z(s)=w$, and with total length $t+sl$ .
By the first variation formula and the equation for the characteristic vector field at $\partial\Omega$, the variation of the length of the curve $\gamma_s$ is $\frac{\partial     \varphi}{\partial v}(p,d_x F(r_x)) \cdot d_x F(c) - \frac{\partial \varphi}{\partial v}(p,d_z F(r)) \cdot w = -dg(w) $, and by the definition of $\gamma_s$, it is also $l$. 
We deduce $l= -dg(w)$, and thus $d\tilde{u}(c)=l+dg(w)=0$. 

It follows that $d_{\sigma(s)}F(\sigma'(s))=0$ at points $s\in N$ where $\sigma$ is differentiable.
As $\sigma$ is Lipschitz, the set of $s$ where it is not differentiable has measure $0$, and we deduce:
$$
\int_{N}  d\tilde{u}(\sigma') = 0
$$

$N$ is contained in the set of points where $d(F\circ \sigma)$ vanishes. Thus, by the Sard-Federer theorem, the image of $N$ has Hausdorff dimension $0$.

Let $\Sigma_{2}$ be the set of points in $L_{a}$ with more than $2$ incoming geodesics. From the proof of \cite[7.3]{Nosotros}, we see that the tangent to $\Sigma_{2}$ has dimension $0$ and thus $\Sigma_{2}$ has Hausdorff dimension $0$.

As $F$ is non-singular at points in $[s_1,s_2]\setminus N$, the set of $s$ in $[s_1,s_2]\setminus N$ mapping to a point in $\Sigma_{2}\cup N$ has measure zero.

Altogether, we see that the integral (\ref{u dere- u zurda es una integral}) can be restricted to the set $C$ of $s$ mapping to a cleave point. $C$ is an open set and thus can be expressed as the disjoint union of a countable amount of intervals. Let $A_{1}$ be one of those intervals. It is mapped by $F\circ\sigma$ diffeomorphically onto a smooth curve $c_0$ of cleave points contained in $L_{a}$. Points of the form $(t,a_{2},\dots,a_{n-1},s) $ for $t<\tilde{t}(a_{2},\dots,a_{n-1},s)$ map through $F$ to a half open ball in $E$. There must be points of $D$ mapping to the other side of $c_0$.
Because of all the above, $c_0$ is also the image of other points in $[s_{1},s_{2}]$. As $c$ is made of cleave points, it must be the image of another component of $C$, which we call $B_{1}$, also mapping diffeomorphically onto $c_0$. Choose a new component $A_{2}$, which is matched to another component $B_{2}$, different from the above, and so on, till the $A_{i}$ and $B_{i}$ are all the components of $C$.

We can write the integral on $B_{i}$ as an integral on $A_{i}$ (we add a minus sign, because the curve is traversed in opposite directions):
$$
\int_{A_{i}} d\tilde{u}(\sigma')+
\int_{B_{i}} d\tilde{u}(\sigma')=
\int_{A_{i}} d\tilde{u}_{l}(\sigma') - d\tilde{u}_{r}(\sigma')
$$
where $d\tilde{u}_{l} $ and $d\tilde{u}_{r} $ are the values of $d\tilde{u} $ computed from both sides. The balanced condition implies
$\sigma'\in \ker(d\tilde{u}_{l} - d\tilde{u}_{r}) $,
and thus the above integral vanishes.
The integral (\ref{u dere- u zurda es una integral}) is absolutely convergent by Lemma \ref{rho is lipschitz}, and the proof follows.

\end{proof}

\paragraph{Remark.} The above proof took some inspiration from \cite[5.2]{Hebda}. The reader may be interested in James Hebda's \emph{tree-like curves}.

\subsection{Structure of S near cleave and crossing points}

In this section we prove some more results about the structure of a balanced split locus near degenerate cleave and crossing points. Besides their importance for proving that $\partial T=0$, we believe they are interesting in their own sake.

\begin{lem}\label{structure of cleave points} 
Let $p\in S$ be a (possibly degenerate) \emph{cleave} point, and let $Q_p=\{x^1,x^2\}$.

There are disjoint univocal neighborhoods $O_1$ and $O_2$ of $x^1$ and $x^2$, and a neighborhood $U$ of $p$ such that for any $q\in U$, $Q_q$ is contained in $O_1\cup O_2$.

Furthermore, if we define:
$$
A_i=\left\lbrace q\in U \text{ such that } Q_q\cap O_i\neq \emptyset  \right\rbrace 
$$
for $i=1,2$, then $A_1\cap A_2$ is the graph of a Lipschitz function, for adequate coordinates in $U$.
\end{lem}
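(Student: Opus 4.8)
The plan is to handle the (possibly degenerate) cleave point $p$ by splitting $Q_p=\{x^1,x^2\}$ according to the order of conjugacy of each $x^i$, and in each case invoking the structure already built in the excerpt. If $x^i$ is non-conjugate, $F$ is a local diffeomorphism near $x^i$, so a small neighborhood $O_i$ is automatically univocal. If $x^i$ is conjugate, then by the structure theorem \ref{structure up to codimension 3} it has order exactly $1$ (a degenerate cleave point has one vector conjugate of order $1$, the other non-conjugate or conjugate of order $1$), and Lemma \ref{uniqueness near order 1 points} furnishes a univocal neighborhood. Shrinking, we may take $O_1$ and $O_2$ disjoint. So the first step is just to record these two cases and produce disjoint univocal $O_1,O_2$.

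The second step is to produce $U$ such that $Q_q\subset O_1\cup O_2$ for all $q\in U$. This is a standard compactness/continuity argument: if not, there is a sequence $q_m\to p$ and $x_m'\in Q_{q_m}$ with $x_m'\notin O_1\cup O_2$. Since $\overline{A(S)}$ is closed and, by Lemma \ref{rho es menor que landa1} together with Lemma \ref{rho is lipschitz}, the relevant portion of $\overline{A(S)}$ lies in a compact set (the $t$-coordinates are bounded via $\rho_S\le\lambda_1$ and $\partial\Omega$ is compact), we can extract a convergent subsequence $x_m'\to x^*\in\overline{A(S)}$ with $F(x^*)=p$, so $x^*\in Q_p=\{x^1,x^2\}\subset O_1\cup O_2$, contradicting $x_m'\notin O_1\cup O_2$ for an open neighborhood.

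The third and main step is the Lipschitz graph statement for $A_1\cap A_2$. Here I would use Lemma \ref{the set and the cone}. The idea: pick a vector $X_i\in R_p$ dual to the speed vector coming through $O_i$; the balanced condition \ref{balanced} forces, at every point of $A_1\cap A_2$ near $p$, the two incoming speed directions $R^1_q$ (from $O_1$) and $R^2_q$ (from $O_2$) to have equal $\widehat{\cdot}(v)$ on vectors $v$ tangent to $A_1\cap A_2$, hence $A_1\cap A_2$ lies in the hyperplane $\ker(\widehat{R^1_q}-\widehat{R^2_q})$. Since $R^1_p\ne R^2_p$ (the two speed vectors at a cleave point are distinct) and are non-degenerate unit Finsler vectors, $\widehat{R^1_p}-\widehat{R^2_p}\ne0$, and by continuity this persists in a neighborhood with a fixed cone of transversality; moreover, much as in the proof of Lemma \ref{rho es Lipschitz en rho lejos de lambda}, $A_1$ (resp. $A_2$) is locally a Lipschitz graph over $\partial\Omega$-type coordinates, so on one side of $A_1\cap A_2$ one has $Q_q\subset O_1$ only and on the other $Q_q\subset O_2$ only, which is exactly the open-set/cone hypothesis of Lemma \ref{the set and the cone}: at any boundary point the forward cone lies in $A_1$. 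Applying that lemma with $A$ replaced by $\{q: Q_q\cap O_1\ne\emptyset,\ Q_q\cap O_2=\emptyset\}$ (an open set whose boundary, within $U$, is $A_1\cap A_2$) gives the Lipschitz graph, in coordinates adapted to a vector $X\in C^+$.

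The main obstacle I expect is the third step when one (or both) of $x^1,x^2$ is conjugate of order $1$: then $R_q$ near $p$ from the $O_i$ side need not be a single vector, and $F$ restricted to $O_i$ is not injective, so "$A_i$ is a Lipschitz graph over boundary coordinates" must be extracted from the special coordinates \ref{local form for order 1} and the fact (from the discussion before Definition \ref{univocal}) that $\widetilde S$ is simultaneously a Lipschitz graph in the $x_1$ direction and in the $r$ direction. Establishing that the set where $Q_q$ meets $O_i$ has Lipschitz boundary transverse to a fixed cone — uniformly as $q$ ranges near the conjugate point — is the delicate point; it is handled exactly by combining Lemma \ref{uniqueness near order 1 points} (which controls $\tilde u$ and hence, via the balanced condition, the dual one-forms) with the cone lemma, but the bookkeeping of which vectors in $R_q$ come from $O_1$ versus $O_2$ is where care is needed.
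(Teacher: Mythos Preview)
Your first two steps (producing disjoint univocal $O_1,O_2$ via Lemma~\ref{uniqueness near order 1 points} in the conjugate case, and the compactness argument for $Q_q\subset O_1\cup O_2$) are exactly what the paper does.

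The gap is in your third step. You correctly target Lemma~\ref{the set and the cone} with $A=\{q:Q_q\subset O_1\}=A_1\setminus A_2$, but your verification of its hypothesis is circular: you invoke ``vectors $v$ tangent to $A_1\cap A_2$'' and argue that the balanced condition forces $\widehat{R^1_q}(v)=\widehat{R^2_q}(v)$ there, and you speak of ``one side'' and ``the other side'' of $A_1\cap A_2$. At this stage $A_1\cap A_2$ has no a~priori regularity---no tangent space, no sides---so none of this can be used. The balanced condition (Definition~\ref{balanced}) is a statement about \emph{approach sequences} $p_i\to p$ and limits of $R_{p_i}$, not about tangent vectors to $S$.

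The paper closes this gap by a direct ray argument. Since $\widehat{d_{x^1}F(r)}\neq\widehat{d_{x^2}F(r)}$, after shrinking one separates the two families $\{\widehat{d_xF(r)}:x\in O_i\}$ by a hyperplane, producing a vector $Z_0$, a cone $C^+$ around it, and $\delta>0$ with
\[
\widehat{d_xF(r)}(Z)<\widehat{d_{x'}F(r)}(Z)-\delta\qquad(x\in O_1,\ x'\in O_2,\ Z\in C^+).
\]
For $q\in A_1\cap A_2$ and a ray $\mathcal R\subset q+C^+$, one lets $q_0$ be the infimum along $\mathcal R$ of points in $A_2$ and argues by contradiction: approaching $q_0$ along $\mathcal R$ from the appropriate side produces a sequence whose limit vector in $R_{q_0}$ violates the maximality clause in Definition~\ref{balanced}, by the displayed strict inequality. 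Hence $(q+C^+)\cap U\subset A_1\setminus A_2$, which \emph{is} the hypothesis of Lemma~\ref{the set and the cone}, and the Lipschitz graph statement follows.

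Note also that your ``main obstacle'' is not one: once univocal neighborhoods exist (Lemma~\ref{uniqueness near order 1 points}), the argument above uses only the separation of the dual one-forms $\widehat{d_{x^1}F(r)}\neq\widehat{d_{x^2}F(r)}$, which holds at any cleave point. No special treatment of the degenerate case is needed, and the detour through ``$A_i$ is a Lipschitz graph over $\partial\Omega$-coordinates'' (which does not even make sense when $F|_{O_i}$ fails to be injective) can be dropped.
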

\begin{proof}
The points $x^1$ and $x^2$ are at most of first order, so we can take univocal neighborhoods $O_{1}$ and $O_{2}$ of $x^1$ and $x^2$.
By definition of $Q_p$ and the compactness of $\Omega$, we can achieve the first property, reducing $U$ if necessary.

We know $\widehat{d_{x^1}F(r)}$ is different from $\widehat{d_{x^2}F(r)}$.
For fixed arbitrary coordinates in $U$, we can assume that $\{\widehat{d_{x}F(r)} \text{ for } x \in O_1\}$ can be separated by a hyperplane from $\{\widehat{d_{x}F(r)} \text{ for } x \in O_2\}$, after reducing $U$, $O_1$, $O_2$ if necessary.
Therefore, there is a vector $Z_0\in T_p\Omega$ and a number $\delta>0$ such that
\begin{equation}\label{separation between A_1 and A_2} 
\widehat{d_{x}F(r)}(Z) < \widehat{d_{x'}F(r)}(Z)+\delta\qquad \forall\, x\in O_1, \, x'\in O_2
\end{equation} 
for any unit vector $Z$ in a neighborhood $G$ of $Z_0$. Let $C^+=\{tZ:t>0,Z\in G\} $ be a one-sided cone containing $Z$. We write $q+C^+$ for the cone displaced to have a vertex in $q$.

Choose $q\in A_1\cap A_2$, and $Z\in G $. Let $\mathcal{R}=\{q'\in U:q'=q+tZ,t>0\} $ be a ray contained in $(q+C^+)\cap U$.
We claim $\mathcal{R}\subset A_1 \setminus A_2 $.

For two points $q_1=q+t_1 Z,q_2=q+t_2 Z\in \mathcal{R} $, we say $q_1<q_2$ if and only $t_1<t_2$.
If $\mathcal{R}\cap A_2\neq\emptyset $, let $q_0$ be the infimum of all points $p>0$ in $\mathcal{R}\cap A_2$, for the above order in $\mathcal{R}$.
If $q_0 \in A_1$ (whether $q_0=q$ or not), we can approach $q_0$ with a sequence of points $q_n=F(x_n)>q_0$ carrying vectors $d_{x_n}F(r)$ with $x_n\in O_2$. The limit point of this sequence is $q_0$, and the limit vector is $d_{x}F(r)$ for some $x\in O_2$, but the incoming vector is in $-G$, which contradicts the balanced condition by (\ref{separation between A_1 and A_2}).

If $q_0\in A_2\setminus A_1$, then approaching $q_0$ with points $q<q_n=F(x_n)<q_0$, we get a new contradiction with the balanced property. The only possibility is  $\mathcal{R}\subset A_1\setminus A_2$. As the vector $Z$ is arbitrary, we have indeed $(q+C^+)\cap U\subset A_1\setminus A_2$.

Fix coordinates in $U$, and let $\varepsilon=\frac{1}{2}dist(p,\partial U)$.
Let $B_\varepsilon$ be the ball of radius $\varepsilon $ centered at $p$.
%
By the above, the hypothesis of lemma \ref{the set and the cone} are satisfied, for $A=A_1\setminus A_2$, the cone $C^+$, the number $\varepsilon$, and $V=B_\varepsilon$.
Thus, we learn from lemma \ref{the set and the cone} that  $A_1\cap A_2\cap B_\varepsilon$ is the graph of a Lipschitz function along the direction $Z_0$ from any hyperplane transversal to $Z_0$.

\end{proof}

The following three lemmas contain more detailed information about the structure of a balanced split locus near a crossing point. The following is stated for the case $n>2$, but it holds too if $n=2$, though then $L$ reduces to a single point $\{a\}$.

\begin{dfn}
 The \emph{normal} to a subset $X\subset T_p^{\ast}  \Omega$ is the set of vectors $Z$ in $T_p \Omega$ such that $\omega(Z)$ is the same number for all $\omega \in X$.
\end{dfn}

\begin{lem}\label{structure of crossing points1} 
Let $p\in S$ be a crossing point. Let $B\subset T_p^{\ast}  \Omega$ be the affine plane spanned by $R_p^\ast $. Let $L$ be the normal to $B$, which by hypothesis is a linear space of dimension $n-2$, and let $C$ be a (double-sided) cone of small amplitude around $L$.

There are disjoint univocal open sets $O_1,\dots,O_N\subset V$ and an open neighborhood $U$ of $p$
such that $Q_q\subset \cup_i O_i$ for all $q$ in $U$.

Furthermore, define sets $A_i$ as in lemma \ref{structure of cleave points}, and call $\mathcal{S}=\cup_{i,j}A_i\cap A_j$ the \emph{essential part} of $S$.
Define $\Sigma= \cup_{i,j,k}A_i\cap A_j\cap A_k$ and let $\mathcal{C}= \mathcal{S}\setminus \Sigma$.
\begin{enumerate}
\item[(1)] At every $q\in \Sigma$, there is $\varepsilon>0$ such that $\Sigma\cap (q+ B_\varepsilon)\subset q+C$.
\item[(2)] $\Sigma$ itself is contained in $p+C$.
\end{enumerate}
\end{lem}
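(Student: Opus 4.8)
The plan is to mimic the structure of the proof of Lemma \ref{structure of cleave points}, using the balanced condition to control which of the sets $A_i$ a point can lie in as we move away from $\mathcal{S}$ in directions transversal to $L$. First I would set up the univocal neighborhoods: since $p$ is a crossing point, by Theorem \ref{structure up to codimension 3} the set $R_p$ consists of non-conjugate vectors and conjugate vectors of order $1$, so every $x\in Q_p$ is conjugate of order at most $1$ and hence, by Lemma \ref{uniqueness near order 1 points} (or the remark after Definition \ref{univocal} for the non-conjugate ones), has a univocal neighborhood; call these $O_1,\dots,O_N$, one around each point of $Q_p$ (which is finite, since otherwise a limit argument together with the bound on the dimension of $R_p^\ast$ would be violated). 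By definition of $Q_p$ and compactness of $\Omega$, shrinking $U$ forces $Q_q\subset\cup_i O_i$ for all $q\in U$; shrinking further we may assume the images $\{\widehat{d_x F(r)}:x\in O_i\}$ cluster near the corresponding vector of $R_p^\ast$, so in particular they all lie near the affine plane $B$.

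The geometric heart is the following separation statement, analogous to (\ref{separation between A_1 and A_2}): if $q\notin p+C$, then $q$ does \emph{not} lie on the ``essential part'' in a way that forces two indices — more precisely, I claim that for a unit vector $Z$ whose span is far from $L$ (equivalently, $Z$ is not almost-normal to $B$), the dual pairing $\widehat{d_x F(r)}(Z)$ genuinely separates the groups $O_i$ according to the value $\omega_i(Z)$, where $\omega_i\in R_p^\ast$ is the limit form attached to $O_i$. Since $R_p^\ast$ spans the $2$-dimensional affine plane $B$ and $Z$ is not normal to $B$, the numbers $\omega_i(Z)$ are not all equal; grouping the indices by the value of $\omega_i(Z)$ and taking the group $\mathcal{I}$ achieving the strict maximum, one gets a one-sided cone $C^+$ around $Z$ and $\delta>0$ with $\widehat{d_x F(r)}(Z')<\widehat{d_{x'}F(r)}(Z')+\delta$ for $x$ outside $\cup_{i\in\mathcal I}O_i$, $x'\in\cup_{i\in\mathcal I}O_i$, and $Z'$ near $Z$. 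Then, exactly as in Lemma \ref{structure of cleave points}, following a ray $q+tZ'$ and using the balanced condition at the first point where a ``wrong'' index would appear yields a contradiction: on $(q+C^+)\cap U$ only indices in $\mathcal I$ can occur in $Q_\bullet$. Applying this for every admissible $Z$ (i.e.\ $Z$ ranging over the directions transversal to a fixed cone $C$ around $L$) shows that near any point of $\Sigma$ — where by definition at least three of the $A_i$ meet, hence at least two distinct values $\omega_i(Z)$ must be attained in $Q_q$ for \emph{every} such $Z$ — no such transversal ray can stay in $\Sigma$. This gives (1): $\Sigma\cap(q+B_\varepsilon)\subset q+C$ for suitable $\varepsilon$. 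For (2), one runs the same argument globally on $U$: if some $q_0\in\Sigma$ had $q_0\notin p+C$, the segment from $p$ to $q_0$ points in a direction transversal to $L$, and walking along it from $p$ (where all $N$ indices are ``present'' in the limit) the separation estimate forbids recovering three coexisting indices at $q_0$; shrinking $U$ around $p$ makes the estimate uniform enough to conclude $\Sigma\subset p+C$.

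The main obstacle I expect is the bookkeeping around the \emph{grouping} of the indices $O_i$: at a crossing point several of the forms $\omega_i$ may coincide (or be collinear in $B$), so the clean dichotomy of the cleave case ($O_1$ versus $O_2$) is replaced by a partition depending on the chosen direction $Z$, and one must check that the balanced condition still applies to the ``maximizing group'' against its complement. Concretely, the balanced condition is stated for a single limit vector $X_\infty\in R_p$ and asserts its dual is maximal on $v=\lim v_{p_i}(p)$; here one needs that, for a sequence $q_n\to q_0$ along a transversal ray carrying vectors with index outside the maximizing group, the limiting vector from $R_p$ (or $R_{q_0}$) cannot be dual-maximal in the direction $-Z$, which is exactly what the strict inequality with $\delta$ provides — but making sure the limit vector genuinely lands in the complementary group (and not on the boundary between groups) requires the clustering of $\{\widehat{d_xF(r)}:x\in O_i\}$ near $\omega_i$ to be quantitatively better than $\delta$, achieved by shrinking the $O_i$. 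A secondary, more routine point is justifying finiteness of $Q_p$ and that the $O_i$ can be taken disjoint and univocal simultaneously with the clustering property; this is the same shrinking argument used at the start of Lemma \ref{structure of cleave points}, now iterated $N$ times.
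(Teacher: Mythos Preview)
Your overall strategy---mimic the cleave-point argument via the balanced condition and directional separation---is exactly what the paper itself says would suffice: it opens with ``This lemma can be proven in a way similar to \ref{structure of cleave points}, but we will take some extra steps to help us with the proof of the other lemmas.'' So the spirit is right, but there is one genuine gap and two places where the paper's route is both cleaner and more useful downstream.

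\textbf{The gap: finiteness of $Q_p$.} Your construction takes one univocal $O_i$ per point of $Q_p$ and asserts $Q_p$ is finite ``since otherwise a limit argument together with the bound on the dimension of $R_p^\ast$ would be violated.'' This is not justified: the crossing-point hypothesis only says $R_p^\ast$ spans a $2$-plane $B$, and since $R_p^\ast$ sits inside the strictly convex curve $\gamma=B\cap\{H=1\}$, it could a priori be an arc or a Cantor set, not a finite set. The paper sidesteps this entirely: it covers $R_p^\ast\subset\gamma$ by the images $\Delta(\mathcal O_x)\cap\gamma$ of univocal neighborhoods, refines to finitely many \emph{disjoint intervals} $I_1,\dots,I_N$ on $\gamma$, and sets $O_i=\Delta^{-1}(\tilde I_i + B(M_0,\varepsilon_2))$. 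Each $O_i$ is then univocal (it sits inside some $\mathcal O_x$) and may contain many points of $Q_p$; no finiteness of $Q_p$ is needed. This interval construction is not incidental---the cyclic ordering of the $I_i$ along $\gamma$ is reused verbatim in the proof of Lemma~\ref{structure of crossing points2}.

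\textbf{The proof of (1).} Your ray-and-maximizing-group argument can be made to work (three distinct points on the strictly convex curve $\gamma$ are affinely independent, so for $Z\notin C$ at most two of the $\omega_i(Z)$ can tie at the maximum), but the paper's argument is shorter and avoids the bookkeeping you flag as the ``main obstacle.'' It simply observes that the approximate tangent to $\Sigma$ at $q$ is contained in the normal to $R_q^\ast$ (this is the balanced condition, cf.\ the proof of Prop.~7.3 in \cite{Nosotros}); since $q\in\Sigma$ means $R_q^\ast$ meets at least three of the intervals $I_i$, these three covectors span an affine plane close to $B$, and the normal to that plane lies in $C$. No separation inequality, no grouping.

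\textbf{The proof of (2).} This is immediate from (1): apply (1) at $q=p$ to get $\varepsilon>0$ with $\Sigma\cap(p+B_\varepsilon)\subset p+C$, then shrink $U$ to $U\cap B_\varepsilon$. Your global walking argument is unnecessary.
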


The next lemma describes the intersection of $\mathcal{S}$ with $2$-planes transversal to $L$.

\begin{lem}\label{structure of crossing points2} 
Let $p\in S$ be a crossing point as above. Let $P\subset  T_p\Omega$ be a $2$-plane intersecting $C$ only at the origin, and let $P_a=P+a$ be a $2$-plane parallel to $P$ for $a\in L $.
\begin{enumerate}
\item If $\vert a\vert<\varepsilon_1$, the intersection of $\mathcal{S} $, the plane $P_a$, and $U$ is a connected Lipschitz tree.
\item The intersection of $\mathcal{S} $, the plane $P_a$, and the annulus of inner radius $c\cdot\vert a\vert $ and outer radius $\varepsilon_2 $:
$$
A(c\:\vert a\vert,\varepsilon_2)=\{q\in U: c\:\vert a\vert<\vert q\vert<\varepsilon_2\}
$$
is the union of $N$ Lipschitz arcs separating the sets $A_i$.
\end{enumerate}
\end{lem}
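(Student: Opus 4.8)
The plan is to reduce the statement to two features of how $\mathcal{S}$ sits near $p$. First, the cleave part $\mathcal{C}=\mathcal{S}\setminus\Sigma$ is a union of Lipschitz hypersurfaces whose conormal directions stay close to the $2$-plane $B-B$ (the linear part of the affine plane $B=\mathrm{aff}(R_p^\ast)$), which annihilates $L$; this comes from Lemma \ref{structure of cleave points}, since each nonempty $A_i\cap A_j$ away from $\Sigma$ is the graph of a Lipschitz function along a direction $Z_{ij}$ with $\widehat{d_xF(r)}(Z_{ij})$ separating $O_i$ from $O_j$, and for $x$ near $Q_p$ and $q$ near $p$ all the relevant dual vectors $\widehat{d_xF(r)}$ lie in a small neighbourhood of $B$, so the admissible separating directions (hence the conormals) stay in a small cone around $B-B$. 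Second, $\Sigma$ is squeezed into the narrow cone $p+C$, which is exactly part (2) of Lemma \ref{structure of crossing points1}.

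First I would record a transversality statement: if $P$ is a $2$-plane meeting $C$ only at the origin, then every $P_a=P+a$ with $a\in L$ is uniformly transversal to each cleave hypersurface $A_i\cap A_j$ near $p$, because $T(A_i\cap A_j)$ stays close to a hyperplane containing $L$ while $P$ meets $C\supset L$ only at $0$ and is $2$-dimensional, so $P\not\subset T(A_i\cap A_j)$. Consequently $(A_i\cap A_j)\cap P_a$, away from $\Sigma$, is a $1$-dimensional Lipschitz submanifold, i.e.\ locally a Lipschitz arc; moreover such an arc can terminate in $\overline U$ only on $\Sigma$ or on $\partial U$, since the boundary (in $\overline U$) of the cleave hypersurface is contained in $\Sigma\cup\partial U$.

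Next I would locate $\Sigma\cap P_a$. Since $\Sigma\subset p+C$ and $P$ is transversal to $C$ (a cone of small amplitude around the $(n-2)$-plane $L$), elementary estimates on $C$ show that $(p+C)\cap P_a$ is contained in a disk of radius at most $c_0|a|$ about the point of $P_a$ closest to the axis $p+L$. Choosing the constant $c$ of the statement to be $c_0$ (times a fixed factor coming from recentering at $p$) gives $\Sigma\cap P_a\cap A(c|a|,\varepsilon_2)=\emptyset$, so on the annulus $A(c|a|,\varepsilon_2)$ we have $\mathcal{S}\cap P_a=\mathcal{C}\cap P_a$, a disjoint union of finitely many Lipschitz arcs, each running from the inner to the outer boundary circle and, being cleave, separating exactly two of the $A_i$. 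To get that there are exactly $N$ of them arranged cyclically, I would transplant the cone–monotonicity argument of Lemma \ref{structure of cleave points} (and Lemma \ref{the set and the cone}) into the plane $P_a$: reading the separation inequalities of the type \eqref{separation between A_1 and A_2} for vectors tangent to $P_a$ shows that each $A_i\cap P_a$ meets every circle $\{\,|q|=r\,\}$, $c|a|<r<\varepsilon_2$, in a single arc, so that circle is partitioned into $N$ arcs belonging to $A_{i_1},\dots,A_{i_N}$ in a fixed cyclic order and the $N$ cleave arcs are the separating rays between consecutive sectors; the number $N$ is the one furnished by Lemma \ref{structure of crossing points1}.

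Finally, for part (1) I would assemble the picture on all of $P_a\cap U$: $\mathcal{S}\cap P_a$ is the union of the central piece $\Sigma\cap P_a$ (a finite set when $a\ne 0$, and $\{p\}$ when $a=0$, since $\Sigma$ has dimension $\le n-2$ and $P_a$ is transversal to $L$), the finitely many cleave arcs joining these central points, and the $N$ outgoing arcs from the annulus part. It is connected because every cleave arc abuts a central point (an arc can end only on $\Sigma$ or $\partial U$, and in $U$ the arcs meeting the central region end on $\Sigma$), and it is a Lipschitz tree because it has no circuits: a circuit would bound a Jordan region $D'\subset P_a\cap U$; as $D'$ is disjoint from $\mathcal{S}$ its interior lies in a single $A_i$, but the escape property underlying the cone estimates (a point of $\mathrm{int}\,A_i$ can be joined to $\partial U$ within $\mathrm{int}\,A_i$) rules out bounded complementary components. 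The only genuinely delicate point is the combinatorial one — showing each $A_i$ contributes a single sector, so the arc count is exactly $N$ and the separation pattern is cyclic; everything else is transversality together with the cone estimates already developed for Lemmas \ref{the set and the cone}, \ref{structure of cleave points} and \ref{structure of crossing points1}.
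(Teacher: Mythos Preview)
Your proposal is correct and follows essentially the same route as the paper: cone--monotonicity inherited from Lemma \ref{structure of cleave points} to make each $\partial A_i$ a Lipschitz graph transversal to $P_a$; the containment $\Sigma\subset p+C$ from Lemma \ref{structure of crossing points1} to confine the branching to a small disk; and the no-circuit argument via the escape cone to get a tree. One point worth sharpening: for the finiteness of $\Sigma\cap P_a$ the relevant fact is not that $\Sigma$ has dimension $\le n-2$ per se, but that its \emph{approximate tangent} lies in $C$ (Lemma \ref{structure of crossing points1}(1)) and $P$ is transversal to $C$, which forces isolated intersections.

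On the point you yourself flag as delicate --- that there are exactly $N$ arcs in cyclic order --- the paper's device is to choose, in the proof of Lemma \ref{structure of crossing points1}, the $O_i$ from consecutive intervals $I_1,\dots,I_N$ along the indicatrix curve $\gamma=B\cap\{H=1\}$, so that the separating directions $Z_i$ can all be taken in $P$ and come in a natural cyclic order. One then argues by contradiction that the wedge $P^i$ between the rays in directions $Z_i$ and $Z_{i+1}$ satisfies $P^i\subset A_i\cup A_{i+1}$: if $q\in P^i\cap A_k$ with $k\notin\{i,i+1\}$, the cone $q+C_k^+$ would meet one of the sectors $p+C_i^+\subset A_i$ or $p+C_{i+1}^+\subset A_{i+1}$, which is impossible. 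This yields one cleave arc per wedge and hence exactly $N$ arcs; your phrasing ``each $A_i\cap P_a$ meets every circle in a single arc'' is the same conclusion, but the mechanism is this sector exclusion rather than a direct convexity of $A_i\cap P_a$ along circles.
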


\begin{figure}[ht]
 \centering
 \includegraphics{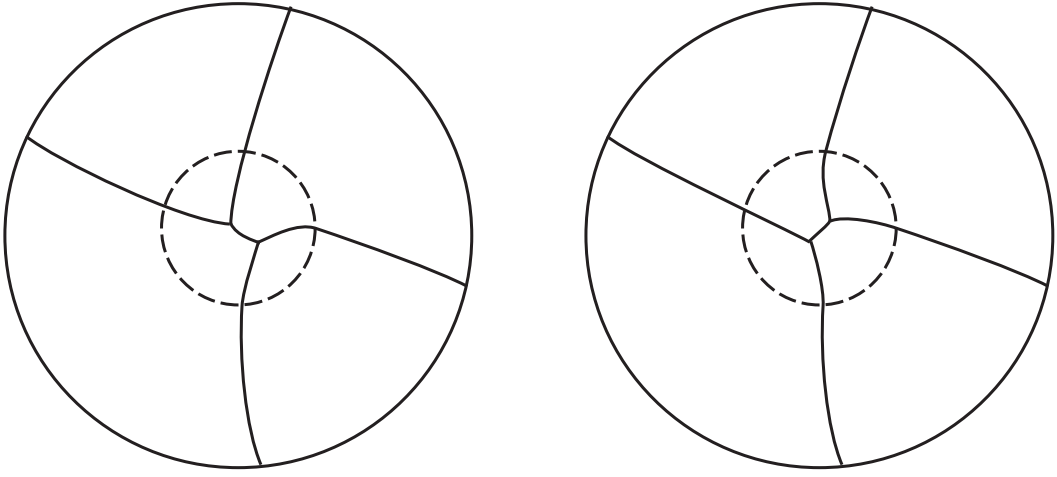}
 \caption{Two possible intersections of a plane $P_a$ with $\mathcal{S}$}
 \label{fig:intersections of plane with S near crossing}
\end{figure}

\paragraph{Remark.} We cannot say much about what happens inside $P_a\cap B(P,c\:\vert a\vert) $. The segments in $ P_a\cap A(c\:\vert a\vert,\varepsilon_2)$ must meet together, yielding a connected tree, but this can happen in several different ways (see figure \ref{fig:intersections of plane with S near crossing}).

Finally, we can describe the connected components of $\mathcal{C}=\mathcal{S}\setminus \Sigma$ within $U$:
\begin{lem}\label{structure of crossing points3} 
Under the same hypothesis, for every $i=1,\dots,N$ there is a coordinate system in $U$ such that:
\begin{itemize}
\item The set $\partial A_i$ is the graph of a Lipschitz function $h_i$, its domain delimited by two Lipschitz functions $f_l$ and $f_r$, for $L^\ast \subset L$:
$$
\partial A_i = \left\lbrace (a,t,h_i(t)), a\in L^\ast, f_{l}(a)<t<f_r(a) \right\rbrace
$$
\item A connected component $\mathcal{C}_0$ of $\mathcal{C}$ contained in $\partial A_i$ admits the following expression, for Lipschitz functions $f_1$ and $f_2$, for $L_0 \subset L$:
$$
\mathcal{C}_0 = \left\lbrace (a,t,h_i(t)), a\in L_0, f_1(a)<t<f_2(a) \right\rbrace
$$
\end{itemize}
\end{lem}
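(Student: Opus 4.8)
The plan is to build on the previous two lemmas, which already give us that $\mathcal{S}$ is a Lipschitz tree transversally to $L$ (Lemma \ref{structure of crossing points2}) and that $\Sigma$ is concentrated near the axis $L$ (Lemma \ref{structure of crossing points1}). The key point is to run the argument of Lemma \ref{structure of cleave points} \emph{uniformly in a family}: near each point of $\mathcal{C}$ the set $\mathcal{S}$ is exactly a cleave-type interface between two of the regions $A_i,A_j$, so $\partial A_i$ carries a canonical pair of ``incoming'' vectors whose duals $\widehat{d_{x}F(r)}$ (for $x$ ranging over the univocal sets $O_i,O_j$) can be separated by a fixed hyperplane after reducing $U$. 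This separation produces, exactly as in Lemma \ref{structure of cleave points}, an open one-sided cone $C^+_i$ (transversal to $L$, since $R_p^\ast$ spans the plane $B$ normal to $L$) and an $\varepsilon>0$ with $(q+C^+_i)\cap(q+B_\varepsilon)\subset A_i\setminus A_j$ for every $q\in\partial A_i\cap\mathcal{C}$. Choosing coordinates so that $C^+_i$ is a cone around $\frac{\partial}{\partial x_n}$ and $L$ lies in the span of the first $n-2$ coordinates, Lemma \ref{the set and the cone} gives $\partial A_i$ as a Lipschitz graph $x_n = h_i(a,t)$ over the remaining coordinates $(a,t)\in L^\ast\times\RR$.

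First I would set up the coordinates: fix $i$, let $P$ be a $2$-plane through $p$ meeting the cone $C$ of Lemma \ref{structure of crossing points1} only at the origin, split $T_p\Omega = L\oplus P$, and use Lemma \ref{structure of crossing points2}(2) to see that in the annulus $A(c|a|,\varepsilon_2)$ of each slice $P_a$ the set $\mathcal{S}$ consists of $N$ Lipschitz arcs separating the $A_i$; the arc bounding $A_i$ is a Lipschitz graph $x_n=h_i(a,t)$ over an interval $f_l(a)<t<f_r(a)$, where the endpoints are where this arc runs into a triple region (a point of $\Sigma$, hence in the cone $C$ by Lemma \ref{structure of crossing points1}(2)) or exits $U$. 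That the two boundary functions $f_l,f_r$ are Lipschitz in $a$ follows by applying Lemma \ref{the set and the cone} once more, now to $\Sigma$ (which is itself squeezed inside $p+C$ and, by Lemma \ref{structure of crossing points1}(1), satisfies a local cone condition along $L$) intersected with $\partial A_i$. This gives the first bullet.

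For the second bullet, note $\mathcal{C}=\mathcal{S}\setminus\Sigma$, and a connected component $\mathcal{C}_0\subset\partial A_i$ is an open subset of the Lipschitz graph $\{x_n=h_i(a,t)\}$ cut out by staying away from $\Sigma$. Since $\Sigma\cap\partial A_i$ is, in these coordinates, a closed set whose complement in the $(a,t)$-domain is open, each connected component of that complement is again delimited by two Lipschitz functions $f_1(a)<t<f_2(a)$ over a subset $L_0\subset L^\ast$ — here I use that the ``walls'' of $\Sigma$ inside $\partial A_i$ are graphs of Lipschitz functions of $a$, which is exactly what the cone condition of Lemma \ref{structure of crossing points1}(1) plus Lemma \ref{the set and the cone} provide. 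Pushing this back up through $x_n=h_i(a,t)$ yields the stated parametrization $\mathcal{C}_0=\{(a,t,h_i(t)): a\in L_0,\ f_1(a)<t<f_2(a)\}$.

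The main obstacle is the uniformity: Lemma \ref{structure of cleave points} is a statement about a \emph{single} cleave point, whereas here I need the separating vector $Z_0$, the cone $C^+_i$, and the number $\varepsilon$ to work simultaneously for \emph{all} points of $\mathcal{C}\cap\partial A_i$ in one fixed neighborhood $U$. This is what forces the reductions of $U$, and it is legitimate precisely because the relevant sets of duals $\{\widehat{d_xF(r)}: x\in O_i\}$ and $\{\widehat{d_xF(r)}: x\in O_j\}$ are compact and, by definition of a crossing point together with the finiteness of $Q_p$, are pairwise separated at $p$; continuity then propagates the separation to a whole neighborhood. The secondary subtlety is bookkeeping the interaction with $\Sigma$: one must check that a component $\mathcal{C}_0$ cannot spiral or accumulate on $\Sigma$ in a way that destroys the graph description, and this is ruled out by the two cone conditions of Lemma \ref{structure of crossing points1} (one along $L$, one transversal), which together pin $\Sigma$ to a Lipschitz $(n-2)$-dimensional set inside each $\partial A_i$.
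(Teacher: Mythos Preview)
Your proposal is correct and follows essentially the same route as the paper. The only difference is organizational: the uniform separating directions $Z_i$, the one-sided cones $C^+_i$, and the fact that $\partial A_i$ is a Lipschitz graph along $Z_i$ from any transversal hyperplane were already established inside the proof of Lemma \ref{structure of crossing points2}, so the paper simply recalls them rather than re-running the argument of Lemma \ref{structure of cleave points} ``uniformly in a family'' as you do; your discussion of the uniformity obstacle is therefore correct but redundant. The paper then chooses the hyperplane $H_i=L+W$ with $W\subset P$ a line, notes that $\partial A_i\cap P_a$ is a connected Lipschitz curve (hence the domain is an interval $(f_l(a),f_r(a))$), and invokes condition (1) of Lemma \ref{structure of crossing points1} for the Lipschitz continuity of $f_l,f_r$ --- exactly your use of the cone condition on $\Sigma$ together with Lemma \ref{the set and the cone}. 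For the second bullet the paper is even terser: it observes $\mathcal{C}_0\cap P_a$ is empty or a connected Lipschitz curve and says ``the second part follows as before'', which is precisely your argument that the walls of $\Sigma$ inside $\partial A_i$ are Lipschitz graphs over $L$.
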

\begin{cor}
$\mathcal{H}^{n-2}(\Sigma)<\infty$.
\end{cor}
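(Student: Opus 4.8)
The plan is to deduce the corollary directly from Lemma \ref{structure of crossing points3} together with the structure theorem \ref{structure up to codimension 3}, by a covering argument. The set $\Sigma$ is, by definition in Lemma \ref{structure of crossing points1}, a subset of the crossing locus (where $\sharp R_p\geq 3$ in the essential sense), so globally $\Sigma\subset\{$crossing points$\}\cup\{$remainder$\}$ in the language of \ref{structure up to codimension 3}. The remainder has Hausdorff dimension at most $n-3$, hence $\mathcal{H}^{n-2}$-null and \emph{a fortiori} of finite $\mathcal{H}^{n-2}$ measure; so it suffices to bound $\mathcal{H}^{n-2}$ of the set $\mathcal{X}$ of crossing points. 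Near \emph{every} crossing point $p$ we have the local picture of Lemma \ref{structure of crossing points1}: there is a neighborhood $U=U_p$ and a cone $C$ around the $(n-2)$-plane $L$ such that $\Sigma\cap U_p$ is contained in $p+C$ and is, by Lemma \ref{structure of crossing points3}, a finite union (at most $\binom{N}{3}$ pieces, one inside each $\partial A_i$) of subsets of Lipschitz graphs over domains in $L^\ast\subset L\cong\RR^{n-2}$.

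The key quantitative step is the standard fact that the $\mathcal{H}^{n-2}$ measure of (a piece of) a Lipschitz graph $\{(a,t,h_i(t)):a\in L_0,\ f_1(a)<t<f_2(a)\}$ over a bounded domain in $\RR^{n-2}$ is finite — it is controlled by the Lipschitz constant and the $(n-2)$-dimensional Lebesgue measure of the projection to $L$, via the area formula. Thus each of the finitely many pieces furnished by Lemma \ref{structure of crossing points3} inside a single chart $U_p$ contributes a finite amount to $\mathcal{H}^{n-2}$, and so $\mathcal{H}^{n-2}(\Sigma\cap U_p)<\infty$. Next I would pass from local to global: $\mathcal{X}$ is contained in $S$, which is closed, hence compact (as $\Omega$ is compact); the sets $U_p$ for $p$ ranging over crossing points, together with the open set $\Omega\setminus\overline{\mathcal{X}}$ — on which there are no crossing points — and suitable small neighborhoods of the lower-dimensional strata, form an open cover of $\Omega$. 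Extract a finite subcover $U_{p_1},\dots,U_{p_m}$ covering $\overline{\mathcal{X}}$ (again using compactness). Then
\[
\mathcal{H}^{n-2}(\Sigma)\leq \sum_{k=1}^{m}\mathcal{H}^{n-2}(\Sigma\cap U_{p_k}) + \mathcal{H}^{n-2}(\text{remainder}) <\infty,
\]
since each term in the finite sum is finite and the remainder term vanishes.

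The main obstacle — and the only point requiring care — is making sure the local finiteness constant in each chart is genuinely finite, i.e. that the Lipschitz functions $h_i$ in Lemma \ref{structure of crossing points3} have domains of \emph{finite} $(n-2)$-dimensional measure and that there are only \emph{finitely many} components $\mathcal{C}_0$ of $\mathcal{C}$ (and only finitely many triple-overlap pieces of $\Sigma$) inside a given $U_{p_k}$. Finiteness of the number of pieces follows because the index set runs over subsets of $\{1,\dots,N\}$ of size $\geq 3$, with $N$ fixed on $U_{p_k}$; finiteness of the domains is automatic once $U_{p_k}$ is taken relatively compact. One should also double-check the degenerate case $n=2$: then $L=\{a\}$ is a point, $\mathcal{H}^{0}$ counts points, and Lemma \ref{structure of crossing points2} shows $\Sigma\cap U_p$ is a discrete (hence, by compactness, finite) set of crossing points, so the statement reads $\sharp\Sigma<\infty$, which is consistent. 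With these checks in place the corollary is immediate.
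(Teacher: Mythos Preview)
There is a genuine gap. The set you display, $\{(a,t,h_i(t)):a\in L_0,\ f_1(a)<t<f_2(a)\}$, is the description in Lemma~\ref{structure of crossing points3} of a component $\mathcal{C}_0$ of $\mathcal{C}$, which is an $(n-1)$-dimensional patch of cleave points, not a piece of $\Sigma$. Presumably you meant its regular boundary pieces $\{(a,f_j(a),h_i(f_j(a))):a\in L_0\}$, $j=1,2$, which \emph{are} $(n-2)$-dimensional Lipschitz graphs over $L_0$. But even with that fix, your argument hinges on the assertion that only finitely many components $\mathcal{C}_0$ occur in $U_p$, and your justification --- ``the index set runs over subsets of $\{1,\dots,N\}$ of size $\geq 3$'' --- bounds only the number of \emph{labels} $(i,j,k)$, not the number of connected components carrying a given label. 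Nothing in Lemmas~\ref{structure of crossing points1}--\ref{structure of crossing points3} rules out infinitely many components of $\mathcal{C}$ in $U_p$ (the tree in each slice $P_a$ has boundedly many edges, but as $a$ varies the combinatorics can change infinitely often), and the paper itself treats the later sum over components only via absolute convergence. So the finiteness step is unproven, and without it your bound on $\mathcal{H}^{n-2}(\Sigma\cap U_p)$ collapses.

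The paper's proof sidesteps this entirely. It applies the coarea formula to the projection $f:U\to L$ parallel to $P$, reducing the question to a uniform bound on the \emph{number of points} $\mathcal{H}^0(\Sigma\cap P_a)$ in each $2$-plane slice. By Lemma~\ref{structure of crossing points2} the slice $\mathcal{S}\cap P_a$ is a connected, simply connected $1$-complex with exactly $N$ degree-one vertices on $\partial U$; the handshaking lemma then forces the number of interior vertices (those in $\Sigma\cap P_a$) to be at most $N-2$. This gives a uniform bound independent of $a$, hence $\mathcal{H}^{n-2}(\Sigma)<\infty$ without ever counting components of $\mathcal{C}$. (Incidentally, $\Sigma$ here is the local object defined inside $U$ in Lemma~\ref{structure of crossing points1}; the global covering argument you propose is not needed for the corollary as stated.)
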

\begin{proof}[Proof of corollary]
We apply the \emph{general area-coarea formula} (see \cite[3.2.22]{Federer}), with $W=\Sigma$, $Z=L$, and $f$ the projection from $U$ onto $L$ parallel to $P$, and $m=\mu=\nu=n-2$, to learn:
$$
\int_{\Sigma}ap\: Jf d\mathcal{H}^{n-2}
=\int_L \mathcal{H}^0(f^{-1}(\{z\})) d\mathcal{H}^{n-2}(z)
=\int_L \mathcal{H}^0(\Sigma\cap P_a) d\mathcal{H}^{n-2}(a)
$$
$ap\: Jf|_{\Sigma}$ is bounded from below, so
if we can bound $\mathcal{H}^0(\Sigma\cap P_a) $ uniformly, we get a bound for $\mathcal{H}^{n-2}(\Sigma) $.

The set $\mathcal{C} \cap P_a \cap U$ is a simplicial complex of dimension $1$, and a standard result in homology theory states that the number of edges minus the number of vertices is the same as the difference between the homology numbers of the complex: $h^1-h^0 $. The graph is connected and simply connected, so this last number is $-1$. The vertices of $\mathcal{C} \cap P_a \cap U $ consist of $N$ vertices of degree $1$ lying at $\partial U$ and the interior vertices having degree at least $3$. 
The \emph{handshaking lemma} states that the sum of the degrees of the vertices of a graph is twice the number of edges, so we get the inequality $2e\geq N+3 \bar{v} $ for the number $e$ of edges and the number $\bar{v}$ of interior vertices. Adding this to the previous equality $e-(N+\bar{v})=-1 $, we get $\bar{v}\leq N-2 $.
We have thus bounded $\bar{v}= \mathcal{H}^0(\Sigma\cap P_a)$ with a bound valid for all $a$.

\end{proof}

\begin{proof}[Proof of \ref{structure of crossing points1}]
This lemma can be proven in a way similar to \ref{structure of cleave points}, but we will take some extra steps to help us with the proof of the other lemmas.

First, recall the map $\Delta$ defined in (\ref{definition of the embedding of V into TastOmega}).
Each point $x$ in $\Delta^{-1}(R_p^{\ast})$ has a univocal neighborhood $\mathcal{O}_x $. 
Recall $R_p^{\ast}$ consists only of covectors of norm $1$. Let $\gamma $ be the curve obtained as intersection of $B$ and the covectors of norm $1$.
Instead of taking the neighborhoods $\mathcal{O}_x $ right away, which would be sufficient for this lemma, we cover $R_p^{\ast}$ with open sets of the form $\Delta(\mathcal{O}_x) \cap \gamma$.

By standard results in topology, we can extract a finite refinement of the covering of $R_p^{\ast}\subset\gamma$ by the sets $\Delta(\mathcal{O}_x)\cap \gamma$ consisting of disjoint non-empty intervals $I_1,\dots,I_N$.
Let $\tilde{I}_i$ be the set of points $tx$ for $t\in (1-\varepsilon_1,1+\varepsilon_1)$ and $x\in I_i$, and choose a linear space $M_0$ of dimension $n-2$ transversal to $B$. Define the sets of our covering:
$$
O_i = \Delta^{-1}(\tilde{I}_i + B(M_0,\varepsilon_2))
$$
for the ball of radius $\varepsilon_2$ in $M_0$ ($\varepsilon_1 $ and $\varepsilon_2 $ are arbitrary, and small).

We can assume that $Q_q\subset \cup_i O_i$ for all $q$ in $U$ by reducing $U$ and the $O_i$ further if necessary, hence we only need to prove the two extra properties to conclude the theorem. 

The approximate tangent to $\Sigma$ at a point $q\in \Sigma\cap U$ is contained in the normal to $R_q^{\ast}$ (see the definition of approximate tangent in \cite{Nosotros} and use the proof of proposition 7.3 there, or merely use the balanced property).
If $R_q^{\ast}$ is contained in a sufficiently small neighborhood of $\gamma$ and contains points from at least three different $I_i$, its normal must be close to $L$.
Thus if we chose $\varepsilon_1$ and $\varepsilon_2$ small enough, the approximate tangent to $\Sigma\cap U$ at a point $q\in \Sigma$ is contained in $C$.
If property (1) did not hold for any $\varepsilon$ at a point $q$, we could find a sequence of points converging to $q$ whose directions from $q$ would remain outside $C$,  violating the above property.

Finally, the second property holds if we replace $U$ by $U\cap B_\varepsilon$, for the number $\varepsilon$ that appears when we apply property (1) to $p$.
\end{proof}

\begin{figure}[ht]
 \centering
 \includegraphics{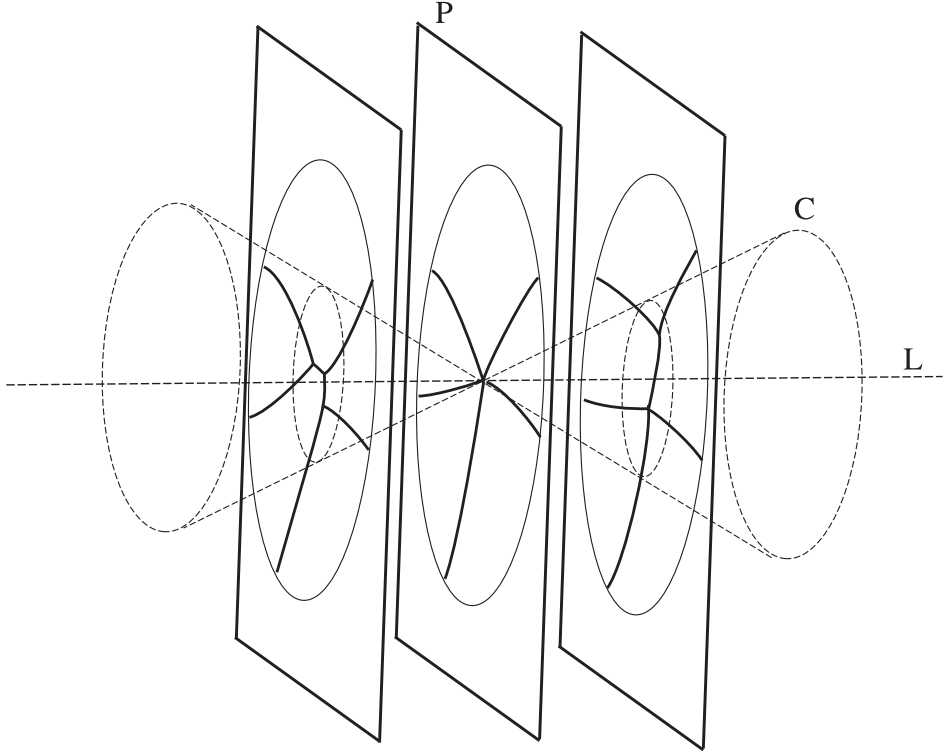}
 \caption{$\mathcal{S}$ near a crossing point} 
\label{fig: S near a crossing point}
\end{figure}

\begin{proof}[Proof of \ref{structure of crossing points2}]
Just like in \ref{structure of cleave points}, we can assume that each set $\{\widehat{d_{x}F(r)} \text{ for } x \in O_i\}$ can be separated from the others by a hyperplane (e.g., a direction $Z_i$), such that:
\begin{equation}\label{separation between A_i and the others} 
\widehat{d_{x}F(r)}(Z) < \widehat{d_{x'}F(r)}(Z)+\delta\qquad \forall \, x\in O_i,\, x'\in O_j,\, i\neq j
\end{equation} 
for some $\delta>0$ and any unit vector $Z$ in a neighborhood $G_i$ of $Z_i$.
Thanks to the care we took in the proof of the previous lemma, we can assume all $Z_i$ belong to the plane $P$ in the statement of this lemma: indeed the intervals $I_i$ can be separated by vectors in any plane transversal to $L$, and the sets $\Delta(O_i)$ are contained in neighborhoods of the $I_i$.

Define the one-sided cones $C^+_i=\{tZ:t>0,Z\in G_i\}$.
The above implies that the intersection of each $C_i^+$ with $P$ is a nontrivial cone in $P$ that consists of rays from the vertex. 

By the same arguments in \ref{structure of cleave points}, we can be sure that whenever $q\in A_i $, then $(q+C^+_i )\cap U\subset A_i$. 
This implies that $\partial A_i$ is the graph of a Lipschitz function along the direction $Z_i$ from any hyperplane transversal to $Z_i$. 
We notice $\partial A_i$ is (Lipschitz) transversal to $P$, so for any $a\in L$, $\partial A_i\cap P$ is a Lipschitz curve.
As the cone $C$ is transversal to $P$, and the tangent to $\Sigma$ is contained in $C$, we see $\Sigma\cap P_a$ consists of isolated points. 

Thus $\mathcal{S} \cap P_a$ is a Lipschitz graph and $\Sigma\cap P_a$ is the set of its vertices.
If it were not a tree, there would be a bounded open subset of $P_a \cap U\setminus\mathcal{S}$ with boundary contained in $\mathcal{S}$. An interior point $q$ belongs to some $A_k$. Then the cone $q+C^+_k$ is contained in $A_k$, but on the other hand its intersection with $P_a$ contains a ray that must necessarily intersect $\mathcal{S}$, which is a contradiction.

We notice $P_a\cap (p+C^+_i)\subset A_i$.
This set is a cone in $P_a$ (e.g. a circular sector) with vertex at most a distance $c_1\vert a\vert $ from $p+a$, where $c_1>0$ depends on the amplitude of the different $C_i$.

If $a=0$, the $N$ segments departing from $p$ with speeds $Z_i$ belong to each $A_i$ respectively.
Let us assume that the intervals $I_i$ appearing in the last proof are met in the usual order $I_1,I_2\dots,I_N$ when we run along $\gamma$ following a particular orientation, and call $P^i$ the region delimited by the rays from $p$ with speeds $Z_i$ and $Z_{i+1}$ (read $Z_1$ instead of $Z_{N+1}$).
If there is a point $q\in P^i\cap A_k\cap B(\varepsilon_2)$ for sufficiently small $\varepsilon_2$, then $(q+C^+_k)\cap U$ would intersect either $p+C^+_i$ or $p+C^+_{i+1}$, and yield a contradiction if $k$ is not $i$ or $i+1$.
Thus $P^i\subset A_i\cup A_{i+1}$. Clearly there must be some point $q$ in $P_i\cap A_i\cap A_{i+1}$, to which we can apply lemma \ref{structure of cleave points}. 
$A_i\cap A_{i+1}$ is a Lipschitz curve near $q$ transversal to $Z_i$ (and to $Z_{i+1}$), and it cannot turn back.
The curve does not meet $\Sigma$, and it cannot intersect the rays from $p$ with speeds $Z_i$ and $Z_{i+1}$, so it must continue up to $p$ itself.
For any $q\in A_i\cap A_{i+1}$, the cone $q+C^+_i$ is contained in $A_i$, and the cone $q+C^+_{i+1}$ is contained in $A_{i+1}$. This implies there cannot be any other branch of $A_i\cap A_{i+1}$ inside $P_i$.

This is all we need to describe $\mathcal{S}\cap P \cap B(\varepsilon_2)$: it consists of $N$ Lipschitz segments starting at $p$ and finishing in $P \cap \partial B(\varepsilon_2) $. The only multiple point is $p$.

For small positive $\vert a\vert$, we know by condition (2) of the previous lemma that $P_a\cap \Sigma\subset C\cap P_a= B(c_2\vert a\vert) \cap P_a$ for some $c_2>0$.
Similarly as above, define regions $P_a^i\subset P_a \cap A(c\vert a\vert,\varepsilon_2)$ delimited by the rays from $a$ with directions $Z_i$ and $Z_{i+1}$, and the boundary of the ring $A(c\vert a\vert,\varepsilon_2)$, for a constant $c>\max(c_1,c_2)$.
Take $c$ big enough so that  for any $q\in P_a^i $ and any $k\neq i,i+1$ , $q+C^+_k \cap U\cap P_a$ intersects either $p+C^+_i$ or $p+C^+_{i+1} $.
The same argument as above shows that $A_i\cap A_{i+1} \subset P_a^i \subset A_i\cup A_{i+1} $.
We conclude there must be a Lipschitz curve of points of $A_i\cap A_{i+1}$, which starts in the inner boundary of $A(c\vert a\vert,\varepsilon_2)$, and ends up in the outer boundary.

\end{proof}

\begin{figure}[ht]
 \centering
 \includegraphics{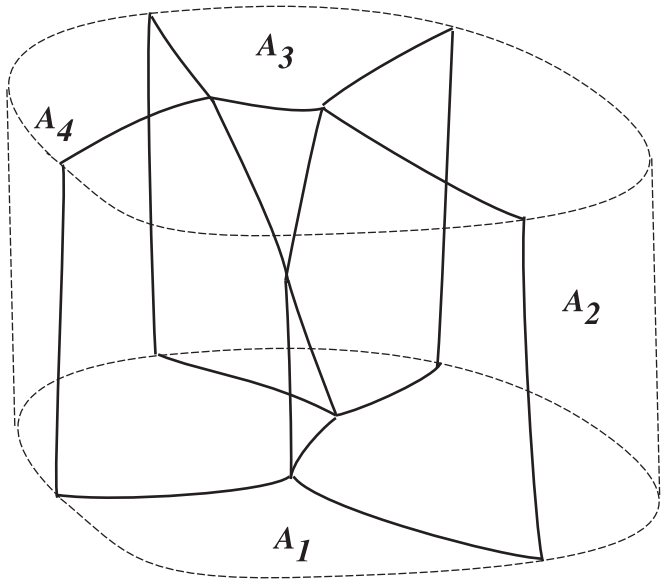}
 \caption{A neighborhood of a crossing point (this view is rotated with respect to figure \ref{fig: S near a crossing point})}
 \label{fig: neighborhood of a crossing point}
\end{figure}

\begin{proof}[Proof of \ref{structure of crossing points3}]
First we assume $U$ has a product form $U=L^\ast\times P^\ast$ for open discs $L^\ast\subset L$ and $P^\ast=B(P,\varepsilon_2)\subset P$.

Recall $\partial A_i$ is the graph of a Lipschitz function along the direction $Z_i$ from any hyperplane transversal to $Z_i$.
Let $H_i=L+W$ be one such hyperplane that contains the subspace $L$ and the vector line $W\subset P$, and construct coordinates $L\times W\times <Z_i>$.
It follows from the previous lemma that $\partial A_i\cap P^\ast_a$ is a connected Lipschitz curve.
In these coordinates $\partial A_i$ is the graph of a Lipschitz function $h_i$.
Its domain, for fixed $a$, is a connected interval, delimited by two functions $f_l:L^\ast\rightarrow W$ and $f_r:L^\ast\rightarrow W$. Condition (1) of lemma \ref{structure of crossing points1} assures they are Lipschitz.

A connected component $\mathcal{C}_0$ of $\mathcal{C}$ is contained in only one $A_i\cap A_j$.
We can express it in the coordinates defined above for $\partial A_i$.
The intersection of $\mathcal{C}_0$ with each plane $P_a$ is either empty or a connected Lipschitz curve. The second part follows as before.
\end{proof}

\subsection{Conclusion}

Using lemma \ref{uniqueness near order 1 points}, we show without much effort that $\partial T$ vanishes near edge points.
Using the structure results from the previous section, we show also that it vanishes at cleave points (including degenerate ones) and crossing points.

\begin{prop}
 Let $p\in S$ be an \emph{edge} point. 
Then the boundary of $T$ vanishes near $p$.
\end{prop}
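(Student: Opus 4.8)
The plan is to use Lemma~\ref{uniqueness near order 1 points} to reduce the behaviour of $T$ near an edge point $p$ to a one-sided, univocal situation, and then show that the cleave locus accumulating at $p$ is, locally, an orientable Lipschitz hypersurface-with-boundary whose boundary sits in the lower-dimensional stratum, so that Stokes' theorem applied to the flat current $T$ produces no distributional boundary term at $p$. Concretely, since $p$ is an edge point, $R_p$ consists of a single conjugate vector $x^0$ of order $1$; by Lemma~\ref{uniqueness near order 1 points} there is an univocal neighbourhood $O$ of $x^0$, and by definition of $Q_p$ and compactness we may shrink a neighbourhood $U$ of $p$ so that $Q_q\subset O$ for all $q\in U$. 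Thus on $U$ the function $h$ of Definition~\ref{u associated to S} is, on $\Omega\setminus S$, obtained by composing $\tilde u$ with the single-valued section coming from $O$, and $h$ extends continuously across $S\cap U$ because $O$ is univocal (the two limiting values of $\tilde u$ from either side of a cleave point in $U$ agree).

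The key step is then the following: near an edge point, the cleave locus $\mathcal C$ inside $U$ is contained in the Lipschitz graph $\widetilde S=\{x_1=\tilde t(x_2,\dots,x_n)\}$ described before Definition~\ref{univocal}, and its ``edge'' — the set of $s$ where the defining curve $\sigma$ hits a conjugate point — is, by the Sard--Federer argument already used in the proof of Lemma~\ref{uniqueness near order 1 points}, of Hausdorff dimension at most $n-3$ in $\Omega$ (it maps into the critical values of $F$ restricted to a hypersurface). Hence it carries no $(n-2)$-current and can be ignored: it is enough, as stated at the beginning of Section~6, to show $\partial T=0$ off a set of zero $\mathcal H^{n-2}$ measure. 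On the complement, $\mathcal C\cap U$ is a genuine Lipschitz hypersurface, and across it $h$ is continuous (by univocity) with $dh_l-dh_r$ annihilating every tangent vector to $\mathcal C$ by the balanced condition, exactly as in the cleave computation of Section~5. Therefore, for a test $(n-2)$-form $\psi$, $\partial T(\psi)=T(d\psi)=\sum_j\int_{\mathcal C_{j,1}}(h_1-h_2)\,d\psi$, and since $h_1-h_2$ is constant on each component $\mathcal C_j$ (it is locally $0$ here by univocity) while $\int_{\mathcal C_j}d\psi$ picks up only the boundary of $\mathcal C_j$, which lies in the negligible edge set, the contribution vanishes near $p$.

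The main obstacle I expect is the regularity/measure-theoretic bookkeeping needed to justify ``$\partial(\mathcal C\cap U)$ is negligible'': one must argue that the portion of $\widetilde S$ that is the cleave locus is relatively open inside $\widetilde S$ with topological frontier (relative to $\widetilde S$) made of edge and degenerate-cleave points, invoke Theorem~\ref{structure up to codimension 3} and the Sard--Federer estimate to see that this frontier has $\mathcal H^{n-2}$-measure zero, and then appeal to the fact that a flat $(n-1)$-current supported on a Lipschitz graph, with $\mathcal H^{n-1}$-finite mass (Lemma~\ref{rho is lipschitz}) and an integrand that is locally constant, has no boundary charge outside that frontier. Once this is in place — essentially Stokes on Lipschitz chains, as in the cleave case — the proposition follows without further computation.
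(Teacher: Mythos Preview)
Your proposal contains the decisive observation but buries it and then layers unnecessary machinery on top. The paper's proof is three lines: take a univocal neighbourhood $O$ of the unique $x^0\in Q_p$, shrink $U$ so that $Q_q\subset O$ for all $q\in U$, and observe that for any cleave point $q\in U$ with $Q_q=\{x_1,x_2\}$ we have $h_1(q)=\tilde u(x_1)=\tilde u(x_2)=h_2(q)$ because $O$ is univocal. Hence the \emph{integrand} $h_1-h_2$ of $T$ is identically zero on $\mathcal{C}\cap U$, so $T$ vanishes on every test form supported in $U$, and $\partial T=0$ there trivially.

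You state this fact yourself (``it is locally $0$ here by univocity''), but only as a parenthetical inside an argument whose overall structure is designed for the case where $h_1-h_2$ is merely locally constant and possibly nonzero. Once the constant is zero, there is nothing left to do: no Stokes' theorem, no need to locate the boundary of $\mathcal{C}\cap U$ inside the lower strata, no need to argue that the edge stratum has $\mathcal{H}^{n-2}$-measure zero. The ``main obstacle'' you flag in your third paragraph is entirely self-imposed --- you are trying to control boundary terms of an integral that is already identically zero before any integration by parts. Drop everything after the first paragraph and the proof is complete.
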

\begin{proof}
Let $p$ be an edge point with $Q_p=\{x\}$.
Let $O$ be a univocal neighborhood of $x$.
It follows by a contradiction argument that there is an open neighborhood $U$ of $p$ such that $Q_q\subset O$ for all $q\in U $.
Recall the definition of $T$:
$$
T(\phi)=\sum_{j}\int_{\mathcal{C}_{j,1}}(h_{1}-h_{2})\phi
$$
For any cleave point $q\in U$ with $Q_q=\{x_1,x_2\}$, $h_i(q)=\tilde{u}(x_i)$.
By the above, both $x_1$ and $x_2$ are in $O$.
As $O$ is univocal, we see $h_1=h_2$ at $q$. The integrand of $T$ vanishes near $p$, and thus $\partial T =0$.
\end{proof}

\begin{prop}
Let $p\in S$ be a (possibly degenerate) \emph{cleave} point. 
Then $\partial T$ vanishes near $p$.
\end{prop}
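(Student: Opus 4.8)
The plan is to reduce the statement, via Lemma~\ref{structure of cleave points}, to a Stokes-type computation on the Lipschitz hypersurface of cleave points, using the balanced condition to identify the integrand of $T$ as a locally constant function there.

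First I would invoke Lemma~\ref{structure of cleave points} at $p$, with $Q_p=\{x^1,x^2\}$: it provides disjoint univocal neighborhoods $O_1,O_2$ of $x^1,x^2$, a neighborhood $U$ of $p$ with $Q_q\subset O_1\cup O_2$ for all $q\in U$, and the fact that, in suitable coordinates on $U$ (say with a distinguished direction $Z_0$), the set $\mathcal{C}_U:=A_1\cap A_2$ is the graph of a Lipschitz function over a connected disc transversal to $Z_0$. From the cone estimate in the proof of that lemma, $A_1\setminus A_2$ and $A_2\setminus A_1$ are exactly the two open sides of $\mathcal{C}_U$, so $\mathcal{C}_U$ separates $U$; in particular, regarded as an $(n-1)$-current, $[\mathcal{C}_U]$ has no boundary inside $U$, its boundary being supported on $\partial U$. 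By the structure theorem~\ref{structure up to codimension 3}, within $U$ the set $S$ differs from $\mathcal{C}_U$ only by the degenerate cleave, edge, crossing and remainder points, all of Hausdorff dimension $\le n-2$, so they contribute nothing to an $(n-1)$-current; moreover at a cleave point $q\in U$ whose two preimages both lie in the same $O_i$ the two values of $\tilde{u}$ agree (univocality), so the integrand of $T$ vanishes there. Hence, for a test $(n-2)$-form $\psi$ supported in $U$,
$$
(\partial T)(\psi)=T(d\psi)=\int_{\mathcal{C}_U}(h_1-h_2)\,d\psi,
$$
where $\mathcal{C}_U$ carries the orientation induced by $\mathcal{O}$ and the incoming vector from the $O_1$-side, and $h_i(q)=\tilde{u}(Q_q\cap O_i)$ is the limit of $h$ (as in Definition~\ref{u associated to S}) from the $A_i$-side.

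Next I would show $h_1-h_2$ is constant on $\mathcal{C}_U$. As observed at the beginning of Section~\ref{section:proof of the main theorems}, along the cleave locus $\widehat{d_{x^i}F(r)}$ is precisely $dh_i$, and the balanced condition~\ref{balanced} forces $\widehat{d_{x^1}F(r)}(v)=\widehat{d_{x^2}F(r)}(v)$ for every $v$ tangent to $S$. Since $\mathcal{C}_U$ is a Lipschitz graph, $h_1-h_2$ is Lipschitz along it (using univocality of the $O_i$ together with the Lipschitz regularity of $\rho_S$ from Lemma~\ref{rho is lipschitz}), and its differential along $\mathcal{C}_U$ vanishes $\mathcal{H}^{n-1}$-a.e.; as $\mathcal{C}_U$ is connected, $h_1-h_2\equiv c$ on it. Therefore
$$
(\partial T)(\psi)=\int_{\mathcal{C}_U}c\,d\psi=c\,(\partial[\mathcal{C}_U])(\psi)=0,
$$
because $\partial[\mathcal{C}_U]$ is supported on $\partial U$, where $\psi$ vanishes. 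Since $\psi$ was an arbitrary $(n-2)$-form supported in $U$, this proves $\partial T=0$ near $p$.

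The step I expect to be the main obstacle is upgrading "$h_1-h_2$ is \emph{locally} constant on the honest cleave locus" to "$h_1-h_2$ is constant on $\mathcal{C}_U$", i.e. controlling $h_1$ and $h_2$ across the degenerate cleave points, which a priori may form a codimension-one subset of $\mathcal{C}_U$. I would handle this by noting that $q\mapsto Q_q\cap O_i$ is Lipschitz on $A_i$ (using that $O_i$ is univocal and $x^i$ is at most conjugate of order $1$), so $h_i$ extends Lipschitz-continuously to $\mathcal{C}_U$; then local constancy on the dense open honest-cleave part, together with continuity on the connected set $\mathcal{C}_U$, forces constancy. Alternatively, one may avoid proving $\mathcal{C}_U$ connected: writing the honest-cleave locus as $\bigsqcup_k \mathcal{C}'_k$ with $h_1-h_2=c_k$ on $\mathcal{C}'_k$, one gets $(\partial T)(\psi)=\sum_k c_k(\partial[\mathcal{C}'_k])(\psi)$, and the continuity of $h_1-h_2$ across the shared $(n-2)$-dimensional faces of adjacent pieces makes these boundary contributions cancel in pairs, again yielding $0$.
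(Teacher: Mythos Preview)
Your proposal is correct and follows essentially the same route as the paper: invoke Lemma~\ref{structure of cleave points}, use univocality of the $O_i$ to discard any cleave contributions with both preimages in a single $O_i$, reduce $T$ near $p$ to an integral over the Lipschitz graph $A_1\cap A_2$, and finish with Stokes plus the balanced condition.

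The one organizational difference is worth noting. You first argue that $h_1-h_2$ is \emph{constant} on $\mathcal{C}_U$ (using connectedness and a continuity extension across degenerate cleave points), then pull the constant out and use $\partial[\mathcal{C}_U]\subset\partial U$. The paper instead applies Stokes directly to obtain
\[
T(d\sigma)=\int_{A_1\cap A_2} d(h_1-h_2)\wedge\sigma,
\]
and observes that the integrand vanishes $\mathcal{H}^{n-1}$-a.e.\ because the balanced condition gives $\widehat{X^1}(v)=\widehat{X^2}(v)$ for tangent $v$ at non-degenerate cleave points, which are of full measure. This sidesteps what you flagged as the ``main obstacle'': there is no need to show $\mathcal{C}_U$ is connected, nor to extend $h_i$ continuously across the degenerate cleave points; a.e.\ vanishing of the differential along the Lipschitz graph suffices. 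Your argument for the extension is fine, but it is an avoidable detour.
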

\begin{proof}
%

Use the sets $U$, $A_1$ and $A_2$ of lemma \ref{structure of cleave points}.

Whenever $\phi$ is a $n-1$ differential form with support contained in $U$, we can compute:
$$
T(\phi)=\int_{A_1\cap A_2}(h_1-h_2)\phi
$$
The components of cleave points inside either $A_1$ or $A_2$ do not contribute to the integral, for the same reasons as in the previous lemma. Recall the definition of $\partial T$, for a differential $n-2$ form $\sigma$:
$$
\partial T(\sigma)=T(d\sigma)=
\int_{A_1\cap A_2}(h_1-h_2)d\sigma
$$
We can apply a version of Stokes theorem that allows for Lipschitz functions. We will provide references for this later:

$$
T(d\sigma)=\int_{A_1\cap A_2}d(h_1-h_2)\sigma
$$

The balanced condition imposes that for any vector $v$ tangent to $A_1\cap A_2$ at a non-degenerate cleave point $q$ with $Q_q=\{x_1,x_2\}$.
$$
\hat{X}^1(v)=\hat{X}^2(v)
$$
for the incoming vectors $X^i=d_{x_i}F(r)$. Recall that $\Hnuno$-almost all points are cleave, and $dh_i$ is dual to the incoming vector $X^i$, so $T(d\sigma)=0 $.
\end{proof}

%

\begin{prop}
Let $p\in S$ be a \emph{crossing} point. 
Then the boundary of the current $T$ (defined in \ref{definition of T}) vanishes near $p$.
\end{prop}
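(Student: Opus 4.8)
The plan is to mimic the argument already used for (possibly degenerate) cleave points, but now organized around the refined local picture supplied by Lemmas \ref{structure of crossing points1}, \ref{structure of crossing points2} and \ref{structure of crossing points3}: apply the Lipschitz Stokes theorem to each sheet of the essential part $\mathcal S$, discard the interior terms by the balanced condition, and then check that the boundary terms — which are concentrated on the triple set $\Sigma$ — cancel pairwise by a cocycle (telescoping) identity.

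First I would fix the data of Lemma \ref{structure of crossing points1}: a neighbourhood $U$ of $p$, univocal open sets $O_1,\dots,O_N$ with $Q_q\subset\bigcup_iO_i$ for $q\in U$, the covering $A_i=\{q\in U: Q_q\cap O_i\neq\emptyset\}$, the essential part $\mathcal S=\bigcup_{i<j}A_i\cap A_j$, the triple set $\Sigma=\bigcup_{i,j,k}A_i\cap A_j\cap A_k$, and $\mathcal C=\mathcal S\setminus\Sigma$. Because each $O_i$ is univocal there is a well-defined function $h^{(i)}$ on $A_i$ with $h^{(i)}(q)=\tilde u(x)$ for any $x\in Q_q\cap O_i$; by Lemma \ref{rho is lipschitz} and the Lipschitz-graph statements of Lemmas \ref{structure of crossing points2}--\ref{structure of crossing points3} it is Lipschitz on $A_i$ and coincides with $h$ on the part of $A_i$ meeting no other $A_j$. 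By Theorem \ref{structure up to codimension 3}, $\mathcal C$ agrees up to a set of vanishing $\Hnuno$ measure with the cleave points in $U$, and each component of $\mathcal C$ lies in a single $A_i\cap A_j$; orienting $A_i\cap A_j$ as part of $\partial A_i$, the definition \eqref{definition of T} gives, for $\phi$ supported in $U$,
$$
T(\phi)=\sum_{i<j}\varepsilon_{ij}\int_{A_i\cap A_j}\bigl(h^{(i)}-h^{(j)}\bigr)\phi,\qquad \varepsilon_{ij}=\pm1,
$$
where $\Sigma$ may be removed from the domains because $\Hnuno(\Sigma)=0$.

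Next, for an $(n-2)$-form $\sigma$ supported in $U$, I would compute $\partial T(\sigma)=T(d\sigma)$ by applying the Lipschitz Stokes theorem on each component of $\mathcal C$ inside a sheet $A_i\cap A_j$ (a Lipschitz graph whose boundary lies in $\Sigma\cup\partial U$, and only the $\Sigma$ part matters since $\operatorname{supp}\sigma\subset U$):
$$
\int_{A_i\cap A_j}\bigl(h^{(i)}-h^{(j)}\bigr)d\sigma=\int_{\partial(A_i\cap A_j)}\bigl(h^{(i)}-h^{(j)}\bigr)\sigma-\int_{A_i\cap A_j}d\bigl(h^{(i)}-h^{(j)}\bigr)\wedge\sigma.
$$
The interior terms vanish exactly as in the cleave case: at $\Hnuno$-a.e.\ point of $A_i\cap A_j$ the point is a genuine cleave point $q$ with $Q_q=\{x_i,x_j\}$, $dh^{(i)}$ and $dh^{(j)}$ are the duals of the incoming vectors $d_{x_i}F(r)$ and $d_{x_j}F(r)$, and the balanced condition forces $d(h^{(i)}-h^{(j)})$ to annihilate the tangent space of $A_i\cap A_j$, so its wedge with $\sigma$ restricts to zero there. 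There remains $\partial T(\sigma)=\sum_{i<j}\varepsilon_{ij}\int_{\partial(A_i\cap A_j)}(h^{(i)}-h^{(j)})\sigma$, an integral over $\Sigma$. By the corollary to Lemma \ref{structure of crossing points3}, $\Hndos(\Sigma)<\infty$, and by Theorem \ref{structure up to codimension 3} all of $\Sigma$ outside a set of Hausdorff dimension $\le n-3$ (hence of $\Hndos$ measure zero, negligible for integrals of an $(n-2)$-form) consists of points lying in exactly three of the $A_i$. At such a point $q\in A_i\cap A_j\cap A_k$, Lemma \ref{structure of crossing points2} (restriction to a $2$-plane transversal to $L$) and Lemma \ref{structure of crossing points3} show $\Sigma$ is locally an $(n-2)$-dimensional Lipschitz graph along which precisely the three sheets $A_i\cap A_j$, $A_j\cap A_k$, $A_k\cap A_i$ abut, and the three boundary orientations they induce on $\Sigma$ are the ones matching the boundary orientations of $A_i$, $A_j$, $A_k$. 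Hence the three local contributions sum to $\bigl[(h^{(i)}-h^{(j)})+(h^{(j)}-h^{(k)})+(h^{(k)}-h^{(i)})\bigr]\sigma=0$, and integrating this pointwise cancellation over $\Sigma$ yields $\partial T(\sigma)=0$.

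The main obstacle is precisely this last bookkeeping step: verifying that the orientations of the three sheets $A_i\cap A_j$, $A_j\cap A_k$, $A_k\cap A_i$ meeting along $\Sigma$ fit together so that the sum of boundary integrals really is the sum of the three jumps $h^{(i)}-h^{(j)}$, $h^{(j)}-h^{(k)}$, $h^{(k)}-h^{(i)}$ on a common $(n-2)$-piece, which then cancels identically. This is exactly where the care taken in the proof of Lemma \ref{structure of crossing points1} (covering $R_p^\ast$ by a cyclically ordered family of intervals $I_i$ along $\gamma$) and the local tree structure of Lemma \ref{structure of crossing points2} are used; the auxiliary facts that $\Hndos(\Sigma)<\infty$ and that higher-multiplicity points of $\Sigma$ have $\Hndos$ measure zero make the boundary integrals well defined and let us restrict attention to the generic degree-three points.
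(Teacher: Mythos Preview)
Your approach is essentially the paper's own: apply a Lipschitz Stokes theorem on each component of $\mathcal C$, kill the interior term by the balanced condition, and show the boundary contributions on $\Sigma$ cancel by a telescoping/pairing identity, after discarding the $\Sigma_T$ part where $R_q^\ast$ spans more than a $2$-plane (Hausdorff dimension $\le n-3$).

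One inaccuracy: your claim that $\Hndos$-almost every point of $\Sigma$ lies in \emph{exactly three} of the $A_i$ is not supported by Theorem \ref{structure up to codimension 3}. A crossing point $q\in\Sigma\setminus\Sigma_T$ has $R_q^\ast$ spanning a $2$-plane, but it can still meet four or more of the $O_i$, so the vertex of the tree $\mathcal S\cap P_a$ at $q$ can have arbitrary degree $d\ge 3$; the paper handles this general case. Your cocycle argument is unaffected, though: by Lemma \ref{structure of crossing points2} the regions $A_{i_1},\dots,A_{i_d}$ around $q$ are cyclically ordered in $P_a$, the $d$ incident edges are $A_{i_1}\cap A_{i_2},\dots,A_{i_d}\cap A_{i_1}$, and with the induced boundary orientations the contributions sum to $\sum_{k}(h^{(i_k)}-h^{(i_{k+1})})=0$. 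Replace ``exactly three'' by ``at least three, cyclically arranged'' and the proof goes through as written.
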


\begin{proof}

We use lemma \ref{structure of crossing points3} to describe the structure of connected components of $\mathcal{C}$ near $p$.
Let $\Sigma_T$, the set of \emph{higher order points}, be the set of those points such that $R_q^{\ast}$ spans an affine subspace of $T^{\ast}_q \Omega$ of dimension greater than $2$.

Take any connected component $\mathcal{C}_0$ of $\mathcal{C}$ contained in $\partial A_i$.
$\partial \mathcal{C}_0 $ decomposes into several parts: 

\begin{itemize}
 \item The regular boundary, consisting of two parts $D_1$ and $D_2$:
$$D_1=\{(a_1,\dots,a_{n-2},f_1(a),h_i(f_1(a))), \forall a\in L^\ast \text{ such that } f_l(a)<f_1(a)<f_2(a)\}
$$ 
$$
D_2=\{(a_1,\dots,a_{n-2},f_2(a),h_i(f_2(a))), \forall a\in L^\ast \text{ such that } f_1(a)<f_2(a)<f_r(a)\}
$$

\item The points of higher order, or $\partial \mathcal{C}_0 \cap \Sigma_T$.

\item The singular boundary, or those points $q=(a_1,\dots,a_{n-2},f_1(a),h_i(f_1(a))) $ where $f_1(a)=f_2(a)$ and $R_q$ is contained in an affine plane.

\item A subset of $\partial U$.

\end{itemize}

Using a version of Stokes theorem that allows for Lipschitz functions, we see that
$$
\int_{\mathcal{C}_0} v d\sigma= \int_{\mathcal{C}_0} d(v \sigma)-\int_{\mathcal{C}_0} (dv) \sigma=
\int_{D_1} v \sigma - \int_{D_2} v \sigma-\int_{\mathcal{C}_0} (dv) \sigma
$$
for any function $v$ and $n-2$ form $\sigma$ with compact support inside $U$.
Indeed, the last coordinate of the parametrization of $\mathcal{C}_0 $ is given by a Lipschitz function, so we can rewrite the integral as one over a subset of $L\times W$, and only Gauss-Green theorem is needed.
We can apply the version in \cite[4.5.5]{Federer}, whose only hypothesis is that the current $\Hnuno\lfloor \partial \mathcal{C}_0 $ must be representable by integration.
Using \cite[4.5.15]{Federer} we find that it is indeed, because its support is contained in a rectifiable set 
Here we are assuming that $D_1$ is oriented as the boundary of $\mathcal{C}_0 $, while $D_2$ is oriented in the opposite way, to match the orientation of $D_1$.

Notice we have discarded several parts of $\partial \mathcal{C}_0$:
\begin{itemize}
 \item A subset of $\partial \mathcal{C}_0$ inside $\partial U$ does not contribute to the integral because $supp(\sigma)\subset\subset U $.
 \item $\partial \mathcal{C}_0 \cap \Sigma_T$ does no contribute because it has Hausdorff dimension at most $n-3$.
 \item The singular boundary does not contribute either, because the normal to $\widetilde{\mathcal{C}_0}$ at a point of the singular boundary does not exist (see \cite[4.5.5]{Federer}).
\end{itemize}




We now prove that $\partial T=0$.

For a form $\sigma $ of dimension $n-2$ and compact support inside $U$:
$$
T(d\sigma)=\sum_{i}\int_{\mathcal{C}_i}(h_l-h_r)d\sigma=
\sum_{i}\int_{\mathcal{C}_i}d(h_l-h_r)\sigma+\sum_{i}\left(\int_{D_{i,1}}(h_l-h_r)\sigma-\int_{D_{i,2}}(h_l-h_r)\sigma\right)
$$
where $D_{i,1}$ and $D_{i,2}$ are the two parts of the regular boundary of $\mathcal{C}_i $.

The first summand is zero and the remaining terms can be reordered (the sum is absolutely convergent because $h$ is bounded and $\Hndos(\Sigma)$ is finite):
$$
\sum_{i}\left(\int_{D_{i,1}}(h_l-h_r)\sigma-\int_{D_{i,2}}(h_l-h_r)\sigma\right)=
\int_{\Sigma\setminus\Sigma_T} \sum_{(i,j)\in I(q)}(h_{i,j,l}-h_{i,j,r})\sigma dq
$$
where every point $q\in \Sigma\setminus\Sigma_T$ has a set $I(q)$ consisting of those $i$ and $j=1,2$ such that $q$ is in the boundary part $D_j$ of the component $\mathcal{C}_i$.
The integrand at point $q$ is then:
$$
\sigma\sum_{(i,j)\in I(q)}(h_{i,j,l}-h_{i,j,r})
$$
where $h_{i,j,l}$ is the value of $\tilde{u}(x)$ coming from the side $l$ of component $\mathcal{C}_i$ and boundary part $D_j$.

By the structure lemma \ref{structure up to codimension 3}, we can restrict the integral to crossing points.
Let $O_1,\dots,O_N$ be the disjoint univocal sets that appear when we apply \ref{structure of crossing points1} to $p$.
For a crossing point $q$, $I(q)$ is in correspondence with the set of indices $k$ such that $O_k\cap Q_p \neq \emptyset $. Indeed, the intersection of $\mathcal{S} $ with the plane $P_a$ containing $q$ is a Lipschitz tree, and $q$ is a vertex, and belongs to the regular boundary of the components that intersect $P_a$ in an edge. 
The $h_{i,j,l}$ in the sum appear in pairs: one is the value from the left coming from one component $\mathcal{C}_i$ and the value from the right of another component $\mathcal{C}_{i'}$.
Each one comes from a different side, so they carry opposite signs, and they cancel.
The integrand at $q$ vanishes altogether, so $\partial T =0$.


\end{proof}

\end{document}